\documentclass[12pt]{article}

\usepackage[english]{babel}
\usepackage{upref,amsfonts,amsxtra,latexsym,color}
\usepackage{amssymb,amsmath,amsthm,a4wide,epsf,mathrsfs,verbatim,hyperref}
\usepackage[all]{xy}


\newtheorem{theorem}{Theorem}[subsection]
\newtheorem{lemma}[theorem]{Lemma}
\newtheorem{corollary}[theorem]{Corollary}
\newtheorem{proposition}[theorem]{Proposition}

\theoremstyle{definition}
\newtheorem{definition}[theorem]{Definition}
\newtheorem{remark}[theorem]{Remark}
\newtheorem{example}[theorem]{Example}
\newtheorem{question}[theorem]{Question}

\numberwithin{equation}{section}
\numberwithin{theorem}{section}

\newcommand{\rank}{\mathrm{rank}}

\newcommand{\image}{\mathrm{Im}}

\newcommand{\Cu}{\mathrm{Cu}}
\newcommand{\CCu}{\mathbf{Cu}}

\newcommand{\cM}{{\cal M}}
\newcommand{\ep}{{\varepsilon}}

\newcommand{\cO}{{\cal O}}
\newcommand{\cT}{{\cal T}}

\newcommand{\cC}{{\mathcal C}}
\newcommand{\cZ}{{\mathcal Z}}
\newcommand{\C}{{\mathbb C}}
\newcommand{\Z}{{\mathbb Z}}
\newcommand{\N}{{\mathbb N}}

\newcommand{\T}{{\mathbb T}}

\newcommand{\cK}{{\cal K}}

\newcommand{\R}{{\mathbb R}}
\newcommand{\Cs}{{$C^*$-al\-ge\-bra}}

\newcommand{\ddv}{{\partial\mathrm{iv}}}
\newcommand{\dv}{{\mathrm{div}}}
\newcommand{\Dv}{{\mathrm{Div}}}
\newcommand{\rnk}{{\mathrm{rank}}}

\newcommand{\sh}{{$^*$-ho\-mo\-mor\-phism}}

\newcommand{\nprecsim}{{\operatorname{\hskip3pt \precsim\hskip-9pt |\hskip6pt}}}




\setcounter{tocdepth}{4}

\title{Divisibility properties for \Cs s}

\author{Leonel Robert$^*$ and Mikael R\o rdam\footnote{This research was supported by the Danish National Research Foundation (DNRF) through the Centre for Symmetry and Deformation}}

\date{}

\begin{document}
\maketitle
 
\begin{abstract} We consider three notions of divisibility in the Cuntz semigroup of a \Cs, and show how they reflect properties of the \Cs. We develop methods to construct (simple and non-simple) \Cs s with specific divisibility behaviour. As a byproduct of our investigations, we show that there exists a sequence $(A_n)$ of simple unital infinite dimensional \Cs s such that the product $\prod_{n=1}^\infty A_n$ has a character. 
\end{abstract}


\section{Introduction} A unital embedding of a matrix algebra $M_m(\C)$ into a unital \Cs{} $A$ can exist only if the equation $mx = [1_A]$ has a solution $x \in K_0(A)$. Thus, only \Cs s in which the class of the unit in $K_0$ is $m$-divisible admit a unital embedding of $M_m(\C)$.  Whereas all von Neumann algebras (with no central summand of type I$_n$ for $n$ finite) have this divisibility property for all $m$, the same is not true for \Cs s, even for the simple ones. \Cs s can fail to have non-trivial projections. Even if they have many projections, as in the real rank zero case,  one cannot expect to solve the equation $mx = [1_A]$ exactly in $K_0(A)$. This paper is concerned with different weaker notions of divisibility, phrased in terms of the Cuntz semigroup of the \Cs, and with how they relate to embeddability properties of the \Cs. Instead of solving the equation $mx = [1_A]$ for $x \in K_0(A)$, one should  look for less restrictive notions of divisibility. One can try, for example,  to solve the inequalities $mx \le \langle 1_A \rangle \le nx$ in the Cuntz semigroup of $A$ for fixed positive integers $m$ and $n$ (typically with $m < n$). We say that $A$ is $(m,n)$-divisible if one can solve this inequality. This is one of three divisibility properties we shall consider in this paper. We show that there is a full \sh{} from $CM_m(\C)$, the cone over $M_m(\C)$, into $A$ if and only if $A$ is $(m,n)$-divisible for some $n$. 

Let us mention three embedding problems that served as motivation for this paper. Let $A$ be a unital \Cs{} with no non-zero finite dimensional representations. Can one always find an embedding of some unital simple infinite dimensional \Cs{} into $A$? Can one always find an embedding of  $CM_2(\C)$  into $A$ whose image is full in $A$? Can one always find two positive mutually orthogonal full elements in $A$?  An affirmative answer to the former problem will imply an affirmative anwer to the second problem, which is known  as the ``Global Glimm Halving problem". An affirmative answer to the Global Glimm Halving problem will imply an affirmative answer to the last mentioned problem. We suspect that all three problems may have negative answers. 

The second and the third problem led us to consider two new notions of divisibility properties. In more detail, we say that $A$ is weakly $(m,n)$-divisible if there are elements $x_1, \dots, x_n$ in $\Cu(A)$ such that $mx_j \le \langle 1_A \rangle \le x_1+ \dots + x_n$. Weak divisibility measures the rank of $A$ in the sense that $A$ is weakly $(m,n)$-divisible for some $n$ if and only if $A$ has no non-zero representations of dimension $< m$. In particular, $A$ has no non-zero finite dimensional representations if and only if for every $m$ there is $n$ such that $A$ is weakly  $(m,n)$-divisible. We say that $A$ is $(m,n)$-decomposable if there are elements $y_1, \dots, y_m$ in $\Cu(A)$ such that $y_1+\cdots +y_m \le \langle 1_A \rangle \le ny_j$.  For a given $m$, $A$ is $(m,n)$-decomposable for some $n$ if and only if $A$ contains $m$ pairwise orthogonal, pairwise equivalent full positive elements.

It was shown in \cite{DadHirTomsWin} that there exists a simple unital infinite dimensional \Cs{} which does not admit a unital embedding of the Jiang-Su algebra $\cZ$. This answered in the negative a question posed by the second named author. It is implicit in \cite{DadHirTomsWin}  that this simple \Cs{} has bad divisibility properties, cf.\ Remark~\ref{rem:J-S}. This leads us to a useful observation, which loosely can be formulated as follows: if $A$ and $B$ are unital \Cs s, and if there is a unital \sh{} from $A$ to $B$, then the divisibility properties of $B$ are no worse than those of $A$. In other words, if $A$ has better divisibility properties than $B$, then you can not unitally embed $A$ into $B$. 

Comparability in the Cuntz semigroup is  concerned with the extent to which one can conclude that $x \le y$ if the ``size" of $x$ (e.g., measured in terms of states) is (much) smaller than the ``size" of $y$. Comparability
and divisibility are probably the two most fundamental properties of the Cuntz semigroup. Good comparability and divisibility properties are necessary and sufficient conditions in Winter's theorem, \cite{Win:Z-stable}, to conclude that a simple, separable, unital \Cs{} with locally finite nuclear dimension tensorially absorbs the Jiang-Su algebra. Also, good comparability and divisibility properties are both necessary and sufficient conditions to ensure that the Cuntz semigroup of a simple, separable, unital, exact \Cs{} $A$ is (naturally) isomorphic to ${\mathrm{Aff}}(T(A)) \sqcup V(A)$, cf.\ \cite{PerToms:recasting}, \cite{BrownPerToms}, and \cite{Elliott2008}. 

The existence of simple \Cs s with bad comparability properties was discovered by Villadsen, \cite{Vil:perforation}, in the mid 1990's. This discovery was the first indication that the Elliott conjecture could be false (in general), and it was also the first example of a simple \Cs{} exhibiting ``infinite dimensional" behaviour. Villadsen's example in \cite{Vil:perforation} has been generalized extensively by several authors (including Villadsen himself) to exhibit simple \Cs s with various kinds of unexpected behaviour, including many ways of failing to have good comparability properties. However, little work has been done to construct simple \Cs s with bad divisibility behaviour, and the literature does not contain systematic ways of producing such examples.  In this paper we show that there is a duality between comparability and divisibility (Lemma~\ref{lm:Z}), and we use this duality to construct examples of simple and non-simple \Cs s with bad divisibility behaviour. 

We use  Lemma~\ref{lm:Z} to obtain a result that concerns the structure of \Cs s that arise as the tensor product of a sequence of unital (simple non-elementary) \Cs s. Each such \Cs{} will of course have non-trivial central sequences. Dadarlat and Toms proved in \cite{DadToms:Z} that if the infinite tensor power $\bigotimes_{n=1}^\infty A$ of a fixed unital \Cs{} $A$ contains a unital copy of an AHS-algebra without characters, then it automatically absorbs the Jiang-Su algebra. It is not known if this condition always is satisfied, even when $A$ is simple and non-elementary. We show in Section~\ref{sec:divcomp} that $\bigotimes_{n=1}^\infty A$ has the Corona Factorization Property for every unital $A$ without characters (and in particular for every unital simple \Cs{} $A \ne \C$). In the other direction we give, in  Section~\ref{sec:obstruct}, an example of a sequence of simple unital infinite dimensional \Cs s whose tensor power, $\bigotimes_{n=1}^\infty A_n$, does not absorb (or admit an embedding of) the Jiang-Su algebra. 

Non-divisibility of a \Cs{} can be interpreted as a degree of inhomogeneity (or ``lumpiness") of the \Cs.  Simple \Cs s are sometimes thought of as being very homogeneous, as for example in \cite{KOS:homogeneity}. 
From this point of view it may at first be surprising that a simple infinite dimensional \Cs{} can fail to have good divisibility properties. We show that there exists a sequence $(A_n)$ of simple, unital, infinite dimensional \Cs s such that $\prod_{n=1}^\infty A_n$ (and also the associated ultrapowers of $(A_n)$) has a character. None of the \Cs s $A_n$ can have a character (being simple and not equal to $\C$), however we can show that they posses ``almost characters" as defined in Section~\ref{sec:ultrapowers}. 

In Section~\ref{sec:infinite} we consider what one might call ``super-divisibility", which leads to a (new) notion of infiniteness of positive elements (and which implies that a multiple of the given element is properly infinite). We use this to reformulate the Corona Factorization Property of semigroups considered in \cite{OPR:Cuntz}. We study variations of examples, originally due to Dixmier and Douady, and answer in this way two questions from \cite{KirRor:pi} in the negative: The sum of two properly infinite positive elements need not be properly infinite, and the multiplier algebra of a \Cs{} which has a properly infinite strictly positive element need not be properly infinite.

\section{Preliminaries} \label{sec:preliminary}
Let $A$ be a \Cs{} and let $\Cu(A)$ denote the Cuntz semigroup of $A$, i.e., the set of Cuntz equivalence classes of positive elements in $A\otimes \mathcal K$ endowed with suitable order and addition structures. Given a  positive element $a$ in $A\otimes \mathcal K$, we shall  denote by $\langle a\rangle$ the Cuntz class of $a$. In \cite{CoEllIv:cuntz}, Coward, Elliott and Ivanescu give  an alternative picture of the Cuntz semigroup where $\Cu(A)$ consists of suitable equivalence classes of  countably generated Hilbert $C^*$-modules over $A$. When using the Hilbert modules picture of $\Cu(A)$, we shall denote the equivalence class of a countably generated Hilbert module $H$ by $[H]$.

We present here some well-known definitions and facts about  the Cuntz semigroup. First of all, we shall frequently use the axioms of the category $\mathbf{Cu}$, of which $\Cu(A)$ is always an object (see \cite{CoEllIv:cuntz}).  An ordered abelian semigroup $S$ is an object in the category $\mathbf{Cu}$ if 
\begin{enumerate}
\item[(A1)]
every increasing sequence in $S$ has a supremum,
\item[(A2)]
for every $u\in S$ there exists a sequence $(u_i)$ in $S$ such that $u_i\ll u_{i+1}$ and $\sup_i u_i=u$,
\item[(A3)]
if $u'\ll u$ and $v'\ll v$, then $u'+v'\ll u+v$,
\item[(A4)]
if $(u_i)$ and $(v_i)$ are increasing sequences then $\sup_i u_i + \sup_i v_i=\sup_i (u_i+v_i)$.
\end{enumerate}
Recall that $u \ll v$ in $S$ if whenever $v = \sup_i v_i$ for some increasing sequence $(v_i)$ in $S$, then $u \le v_i$ for some $i$. An element $u \in S$ is called \emph{compact} if $u \ll u$. 

We also note the following two additional properties of the Cuntz semigroup of a \Cs which are note listed among the axioms of
$\mathbf{Cu}$. The first of them asserts that the Cuntz semigroup of a  \Cs{} almost has the Riesz Decomposition Property, and the second states that its order relation is almost the algebraic order. 
\begin{itemize}
\item[(P1)]
if $u'\ll u\leq v+w$, then there exist $v'$ and $w'$, with $v'\leq u,v$ and
$w'\leq u,w$, and such that  $u'\ll v'+w'$.
\item[(P2)] if $u'\ll u\leq  v$, then there exists $w$ such that $u'+w\leq v\leq u+w$.
\end{itemize}
For the proofs of these facts, see \cite[Proposition 5.1.1]{Robert2011}  for the first and \cite[Lemma 7.1 (i)]{RorWin:Z} for the second.

We will also make use of the sequential continuity with respect to inductive limits of the functor $\Cu(\cdot)$ proved in \cite{CoEllIv:cuntz} (see also the proof of \cite[Theorem 4.8]{Elliott2008}). It can be stated as follows:

\begin{proposition}[\cite{CoEllIv:cuntz}]
\label{prop:Culimits}
Let $A=\varinjlim (A_i,\varphi_{i,j})$ be a sequential inductive limit of \Cs s. 
\begin{enumerate}
\item For every $u\in \Cu(A)$ there exists an increasing sequence $(u_i)_{i=1}^\infty$
with supremum $u$ and such that each $u_i$ belongs to $\bigcup_j \image(\Cu(\varphi_{j,\infty}))$.

\item If $u,v\in \Cu(A_i)$ are such that $\Cu(\varphi_{i,\infty})(u)\leq \Cu(\varphi_{i,\infty})(v)$,
then for every $u'\ll u$ there exists $j$ such that $\Cu(\varphi_{i,j})(u')\leq \Cu(\varphi_{i,j})(v)$.
\end{enumerate}
\end{proposition}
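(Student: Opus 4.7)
\emph{Proof plan.} Both parts rest on two standard tools: first, that $A\otimes\cK$ is the C*-algebraic inductive limit of the algebras $A_k\otimes\cK$, so that any positive element of the limit can be norm-approximated by positive elements from a finite stage; and second, on R\o rdam's perturbation lemma, which says that if $a,b$ are positive and $\|a-b\|<\varepsilon$, then $(a-\varepsilon)_+=d^*bd$ for some $d$, so that $\langle (a-\varepsilon)_+\rangle \le \langle b\rangle$ in the Cuntz semigroup.

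For (1), fix a positive representative $a\in A\otimes\cK$ of $u$ and a strictly decreasing null sequence $(\varepsilon_n)$. Choose an increasing sequence of indices $k_n$ and positive elements $a_n\in A_{k_n}\otimes \cK$ with $\|\varphi_{k_n,\infty}(a_n)-a\|$ arbitrarily small compared to $\varepsilon_n$. Put $u_n := \Cu(\varphi_{k_n,\infty})\bigl(\langle (a_n-\varepsilon_n/2)_+\rangle\bigr)$; by construction each $u_n$ lies in $\image\bigl(\Cu(\varphi_{k_n,\infty})\bigr)$. Two applications of R\o rdam's lemma show that $\langle (a-\varepsilon_n)_+\rangle \le u_n \le u$, and hence $\sup_n u_n = u$ since $\langle (a-\varepsilon_n)_+\rangle\nearrow u$. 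The delicate point is to arrange that the sequence $(u_n)$ itself is increasing rather than merely bounded with the right supremum; this is obtained by choosing the approximation errors to decay much faster than $\varepsilon_n$, so that the chain $(a-\varepsilon_{n+1})_+ \precsim (\varphi_{k_{n+1},\infty}(a_{n+1})-\varepsilon_{n+1}/2)_+$ feeds into the upper bound $u_n \le \langle (a-\varepsilon_n')_+\rangle$ for some $\varepsilon_n'>\varepsilon_{n+1}$, and then passing to a subsequence.

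For (2), write $u=\langle a\rangle$ and $v=\langle b\rangle$ with $a,b\in A_i\otimes\cK$ positive. Since $u'\ll u$ and $u = \sup_{\delta>0}\langle (a-\delta)_+\rangle$, there is some $\delta>0$ with $u'\le \langle (a-2\delta)_+\rangle$. The hypothesis $\Cu(\varphi_{i,\infty})(u)\le \Cu(\varphi_{i,\infty})(v)$ translates to $\varphi_{i,\infty}(a)\precsim \varphi_{i,\infty}(b)$ in $A\otimes \cK$; hence there exists $s\in A\otimes\cK$ with $\|(\varphi_{i,\infty}(a)-\delta)_+ - s^*\varphi_{i,\infty}(b)s\|$ as small as desired. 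Approximating $s$ by an element $s_j\in A_j\otimes \cK$ for $j\ge i$ large, and using that $\varphi_{j,\infty}$ is norm-contractive, we get $\|(\varphi_{i,j}(a)-\delta)_+ - s_j^*\varphi_{i,j}(b)s_j\|<\delta$. One more application of R\o rdam's lemma gives $(\varphi_{i,j}(a)-2\delta)_+\precsim \varphi_{i,j}(b)$ in $A_j\otimes \cK$, which chains to
\[
\Cu(\varphi_{i,j})(u') \le \langle (\varphi_{i,j}(a)-2\delta)_+\rangle \le \langle \varphi_{i,j}(b)\rangle = \Cu(\varphi_{i,j})(v),
\]
as required. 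The main obstacle is (1): controlling the approximations carefully enough that $(u_n)$ is actually increasing. In (2) the only subtlety is ensuring the norm approximation of the witness $s$ by some $s_j$ is good enough, after which everything is bookkeeping with R\o rdam's lemma at a finite stage.
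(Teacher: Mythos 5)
The paper itself offers no proof of this proposition: it is quoted from Coward--Elliott--Ivanescu (with a pointer to the proof of Theorem 4.8 of Elliott--Robert--Santiago), so there is no in-paper argument to compare against; your proposal is the standard argument one would write down. For part (1) the plan works: choosing $\|\varphi_{k_n,\infty}(a_n)-a\|<\eta_n$ with, say, $\eta_n<\varepsilon_n/4$ and $\varepsilon_{n+1}<\varepsilon_n/4$, the perturbation lemma gives $\langle (a-\varepsilon_n)_+\rangle \le u_n \le \langle (a-\varepsilon_n/2+\eta_n)_+\rangle \le \langle (a-\varepsilon_{n+1})_+\rangle \le u_{n+1}$, so the sequence is increasing by construction and no passage to a subsequence is needed; its supremum is $u$ because it dominates $\langle (a-\varepsilon_n)_+\rangle$ and is dominated by $u$.

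In part (2), however, the step as you justify it would fail. After approximating the witness $s$ by $\varphi_{j,\infty}(s_j)$ you only control the norm, in $A\otimes\cK$, of $\varphi_{j,\infty}\bigl((\varphi_{i,j}(a)-\delta)_+-s_j^*\varphi_{i,j}(b)s_j\bigr)$; contractivity of $\varphi_{j,\infty}$ goes in the wrong direction and cannot transfer this to a norm estimate on $(\varphi_{i,j}(a)-\delta)_+-s_j^*\varphi_{i,j}(b)s_j$ inside $A_j\otimes\cK$, since the connecting maps, and hence $\varphi_{j,\infty}$, need not be injective (so in particular not isometric). The standard repair is to use that in a sequential inductive limit $\|\varphi_{j,\infty}(x)\|=\lim_{k\to\infty}\|\varphi_{j,k}(x)\|$, the norms $\|\varphi_{j,k}(x)\|$ being decreasing in $k$: choose $k\ge j$ with $\|(\varphi_{i,k}(a)-\delta)_+-\varphi_{j,k}(s_j)^*\varphi_{i,k}(b)\varphi_{j,k}(s_j)\|<\delta$ and apply the perturbation lemma at stage $k$ to get $(\varphi_{i,k}(a)-2\delta)_+\precsim\varphi_{i,k}(b)$, hence $\Cu(\varphi_{i,k})(u')\le\Cu(\varphi_{i,k})(v)$. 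Since the statement only asks for some index, this closes the gap, and with that correction your argument for (2) is complete.
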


\begin{remark}[The cone over a matrix algebra] \label{rem:CM}
Let $m$ be a positive integer, and let $CM_m(\C)$ denote the cone over $M_m(\C)$, i.e., the \Cs{} of all continuous functions $f \colon [0,1] \to M_m(\C)$ that vanish at $0$. 

For each $i,j=1,2, \dots, m$, let  $e_{ij}$ denote the $(i,j)$th matrix unit  in $M_m(\C)$, and denote by $e_{ij} \otimes \iota$ the function $t \mapsto te_{ij}$ in $CM_m(\C)$. Then $(e_{ii} \otimes \iota)_{i=1}^m$ are positive contractions in $CM_m(\C)$ which are pairwise equivalent and orthogonal. (We say here that two positive elements $a$ and $b$ are equivalent, denoted $a \sim b$, if $a = xx^*$ and $b = x^*x$ for some element $x$ in the ambient \Cs.) 

We recall the following well-known universal property of $CM_m(\C)$ (see for example \cite[Propositions 2.3 and 2.4]{RorWin:Z}): 
Let $A$ be any \Cs{} and let $a_1,a_2, \dots, a_m$ be positive contractions in $A$. Then there exists a \sh{} $\varphi \colon CM_m(\C) \to A$ satisfying $\varphi(e_{jj} \otimes \iota) = a_j$ if and only if $a_1,a_2, \dots, a_m$ are pairwise orthogonal and pairwise equivalent in $A$. 
\end{remark}

\noindent The following lemma is well-known:

\begin{lemma} \label{lm:comp}
Let $A$ be a \Cs{} and let $a,b_1,b_2,\dots,b_n$ be positive elements in $A$. Then:
\begin{enumerate}
\item 
$\langle a\rangle\leq \sum_{i=1}^n\langle b_i\rangle$ if and only if 
for each $\ep>0$ there exist $d_1,d_2,\dots,d_n\in A$ such that 
$(a-\ep)_+=\sum_{i=1}^n d_ib_id_i^*$.
\item $\sum_{i=1}^n\langle b_i\rangle \le \langle a\rangle$ if and only if for each $\ep > 0$ there exist mutually orthogonal positive elements $a_1,a_2, \dots, a_n$ in $\overline{aAa}$ such that $a_i \sim (b_i-\ep)_+$ for all $i$. 
\end{enumerate}
\end{lemma}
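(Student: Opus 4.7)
The plan is to reduce both statements to the standard characterization of Cuntz comparison: $\langle c \rangle \le \langle d \rangle$ in $\Cu(A)$ if and only if for every $\ep > 0$ there exists an element $x$ with $(c - \ep)_+ = x d x^*$. Throughout I identify $\sum_{i=1}^n \langle b_i \rangle$ with $\langle b \rangle$, where $b = \diag(b_1, \ldots, b_n) \in M_n(A) \subseteq A\otimes \cK$, and identify $\langle a \rangle$ with the Cuntz class of $a$ embedded in the $(1,1)$-corner of $M_n(A)$.

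For part (i), both directions are book-keeping. In the forward direction, $\langle a\rangle\le\langle b\rangle$ provides, for each $\ep>0$, a row vector $x=(d_1,\dots,d_n)\in M_{1,n}(A)$ with $(a-\ep)_+ = xbx^* = \sum_{i=1}^n d_ib_id_i^*$. The converse is immediate: from $(a-\ep)_+=\sum_i d_ib_id_i^*$ one has $\langle(a-\ep)_+\rangle\le\sum_i\langle b_i\rangle$, and letting $\ep\to 0$ (together with axiom (A2)) yields $\langle a\rangle\le\sum_i\langle b_i\rangle$.

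The backward direction of (ii) will be analogous. Pairwise orthogonal $a_1,\dots,a_n\in\overline{aAa}$ with $a_i\sim(b_i-\ep)_+$ satisfy $\sum_i\langle(b_i-\ep)_+\rangle=\langle a_1+\cdots+a_n\rangle\le\langle a\rangle$, and letting $\ep\to 0$ concludes. The main content is therefore the forward direction of (ii). Given $\ep>0$, the inequality $\langle b\rangle\le\langle a\rangle$ produces $y\in M_n(A)$ with $(b-\ep)_+=y\tilde a\, y^*$, where $\tilde a$ denotes $a$ embedded in the $(1,1)$-corner. Since only the first column of $y$ interacts with $\tilde a$, setting $d_i=y_{i1}\in A$ the matrix identity unpacks to
\[
d_ia d_j^* \;=\; \delta_{ij}\,(b_i-\ep)_+ \qquad (1\le i,j\le n).
\]
The key move is then to set $h_i=a^{1/2}d_i^*\in A$ and $a_i=h_ih_i^*=a^{1/2}d_i^*d_ia^{1/2}$. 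Then $a_i\in\overline{aAa}$ automatically, while $a_i\sim h_i^*h_i=d_iad_i^*=(b_i-\ep)_+$ by the standard $xx^*\sim x^*x$ equivalence; and for $i\ne j$,
\[
a_ia_j \;=\; h_i(h_i^*h_j)h_j^* \;=\; h_i\cdot 0\cdot h_j^* \;=\; 0,
\]
so the $a_i$ are pairwise orthogonal, as required.

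The only real obstacle is the choice of $a_i$: the more obvious candidate $d_iad_i^*$ carries the correct Cuntz class but need not lie in $\overline{aAa}$ nor be pairwise orthogonal. Conjugating by $a^{1/2}$ both places the elements inside $\overline{aAa}$ and, via $xx^*\sim x^*x$, converts the off-diagonal vanishing $d_iad_j^*=0$ into genuine orthogonality.
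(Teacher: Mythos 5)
Your proof is correct and takes essentially the same route as the paper's: both directions are reduced to the standard characterization $(c-\ep)_+ = x\,d\,x^*$ of Cuntz subequivalence applied to $\diag(b_1,\dots,b_n)$ and to $a$ sitting in a corner, and in (ii) your choice $a_i = a^{1/2}d_i^*d_ia^{1/2}$ is exactly the paper's $a^{1/2}d_id_i^*a^{1/2}$ up to the row/column convention for $d$, with the same verification of orthogonality and of $a_i \sim (b_i-\ep)_+$. Nothing further is needed.
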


\begin{proof} (i). If $\langle a\rangle\leq \sum_{i=1}^n\langle b_i\rangle$, then $a \precsim b_1 \oplus b_2 \oplus \cdots \oplus b_n$, whence 
$$(a-\ep)_+ = d^*( b_1 \oplus b_2 \oplus \cdots \oplus b_n)d = \sum_{i=1}^n d_i^*b_id_i$$
for some $d = (d_1,d_2, \dots,d_n)^t \in M_{n,1}(A)$. The converse statement is trivial.

(ii). Suppose that  $\sum_{i=1}^n\langle b_i\rangle \le \langle a\rangle$. Then 
$(b_1-\ep)_+ \oplus (b_2-\ep)_+ \oplus \cdots \oplus (b_n -\ep)_+ = d^*ad$ for some $d = (d_1,d_2, \dots, d_n)$ in $M_{1,n}(A)$. Thus $d_j^*ad_i = 0$ if $j \ne i$ and $d_i^*ad_i = (b_i-\ep)_+$ for all $i$.
Put $a_i = a^{1/2}d_id_i^*a^{1/2}$. It is now straightforward to verify that the $a_i$'s are as desired. The converse statement is trivial.
\end{proof}

\noindent
Here is another lemma that we will use frequently:
\begin{lemma} \label{lm:cone-comp}
Let $a$ and $b$ be positive elements in $A\otimes \mathcal K$ with $\|a\|\leq 1$, and let $m\in \N$.
The following are equivalent:
\begin{enumerate}
\item  $m\langle a\rangle\leq \langle b\rangle$,
\item for each $\ep >0$ there exist mutually orthogonal positive elements $b_1,b_2,\dots,b_m$ in $\overline{b(A\otimes \mathcal K)b}$ such that  $\langle b_i\rangle=\langle(a-\ep)_+\rangle$
for all $i$.
\item  for each $\ep>0$ there exists a $^*$-homomorphism $\varphi\colon CM_m(\C)\to \overline{b(A\otimes \mathcal K)b}$ such that $\langle\varphi(e_{11} \otimes \iota)\rangle=\langle(a-\ep)_+\rangle$.
\end{enumerate}
\end{lemma}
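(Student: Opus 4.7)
My plan is the cyclic chain (iii) $\Rightarrow$ (ii) $\Rightarrow$ (i) $\Rightarrow$ (iii). The first two are soft. For (iii) $\Rightarrow$ (ii), set $b_i := \varphi(e_{ii}\otimes\iota)$: these are pairwise orthogonal and pairwise equivalent in the sense of Remark~\ref{rem:CM} (via $\varphi(t^{1/2}e_{ij})$), so in particular all have Cuntz class $\langle(a-\epsilon)_+\rangle$ and lie in $\overline{b(A\otimes\mathcal K)b}$. For (ii) $\Rightarrow$ (i), pairwise orthogonality of the $b_i$'s inside $\overline{b(A\otimes\mathcal K)b}$ gives $m\langle(a-\epsilon)_+\rangle = \langle b_1+\cdots+b_m\rangle \le \langle b\rangle$ for every $\epsilon>0$; passing to the supremum as $\epsilon\to 0$ and using axioms (A2), (A4) of $\mathbf{Cu}$ to commute the sup with multiplication by $m$ yields $m\langle a\rangle \le \langle b\rangle$.

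The substantive step is (i) $\Rightarrow$ (iii). Fix $\epsilon>0$ and split $\epsilon = \epsilon_0 + \delta_0$ with $\epsilon_0,\delta_0>0$. Lemma~\ref{lm:comp}(ii), applied with each $b_i$ of the lemma replaced by $a$ and with the ``$a$'' of the lemma playing the role of our $b$, produces pairwise orthogonal positive elements $a'_1,\dots,a'_m$ in $\overline{b(A\otimes\mathcal K)b}$ together with elements $x_i$ satisfying $a'_i = x_ix_i^*$ and $c := x_i^*x_i = (a-\epsilon_0)_+$. I will use $\tilde a_i := (a'_i-\delta_0)_+$ as the images $\varphi(e_{ii}\otimes\iota)$; by Remark~\ref{rem:CM} it suffices to verify that they are pairwise orthogonal and pairwise equivalent positive contractions with $\langle\tilde a_1\rangle=\langle(a-\epsilon)_+\rangle$. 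Orthogonality is inherited from the $a'_i$'s; the norm bound $\|\tilde a_i\|\le 1-\epsilon_0$ is free; and the Cuntz class equals $\langle(c-\delta_0)_+\rangle = \langle(a-\epsilon)_+\rangle$ by the standard equivalence $(xx^*-\delta_0)_+ \sim (x^*x-\delta_0)_+$. For pairwise equivalence, let $u_i\in (A\otimes\mathcal K)^{**}$ be the polar part of $x_i$ (so $x_i=u_ic^{1/2}$, $u_i^*u_i=$ support of $c$, $u_iu_i^*=$ support of $a'_i$), and set
$$w_{ij} := u_i(c-\delta_0)_+^{1/2} u_j^*.$$
I check first that $w_{ij}$ actually lies in $A\otimes\mathcal K$: since $g(t) := (t-\delta_0)_+^{1/4}/t^{1/2}$ extends to a continuous function on $[0,\|c\|]$ vanishing at $0$, we get $u_i(c-\delta_0)_+^{1/4} = u_ic^{1/2}g(c) = x_ig(c)\in A\otimes\mathcal K$, and symmetrically on the right, so $w_{ij} = [u_i(c-\delta_0)_+^{1/4}]\cdot[u_j(c-\delta_0)_+^{1/4}]^* = x_ig(c)^2x_j^*\in A\otimes\mathcal K$. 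A direct calculation, using that $u_j^*u_j$ is the support of $c$ and hence acts as the identity on $(c-\delta_0)_+^{1/2}$, and that $u_i h(c)u_i^* = h(a'_i)$ for continuous $h$ with $h(0)=0$, yields $w_{ij}w_{ij}^* = u_i(c-\delta_0)_+u_i^* = \tilde a_i$ and $w_{ij}^*w_{ij} = \tilde a_j$. The universal property of $CM_m(\C)$ then produces the desired $*$-homomorphism.

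The main obstacle is precisely this last construction. Lemma~\ref{lm:comp}(ii) easily gives pairwise orthogonal pieces each equivalent to a common element $c$, but equivalence in the sense $a=yy^*,b=y^*y$ is not transitive on positive elements: naive composition $z:=x_ix_j^*$ only delivers $zz^* = a'^2_i$ and $z^*z = a'^2_j$, i.e.\ pairwise equivalence of squares. The fix is to insert an extra factor of $(c-\delta_0)_+^{1/2}$ between the polar parts, which requires cutting down to $(a'_i-\delta_0)_+$. Verifying that the resulting element $w_{ij}$ actually lies in $A\otimes\mathcal K$---despite being built from $u_i,u_j$ which live only in the bidual---is the subtle computational point, handled by the function $g$ above.
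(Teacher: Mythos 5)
Your proof is correct and follows the same route as the paper: (iii)$\Rightarrow$(ii)$\Rightarrow$(i) are handled exactly as there, and (i)$\Rightarrow$(iii) is obtained from Lemma~\ref{lm:comp}~(ii) together with the universal property of $CM_m(\C)$ from Remark~\ref{rem:CM}. The only divergence is how pairwise equivalence of the orthogonal pieces is secured. The paper simply notes that each piece is equivalent to the common element $(a-\ep)_+$ and invokes the universal property; this is legitimate because the relation $a\sim b$ (i.e., $a=xx^*$, $b=x^*x$) on positive elements is in fact transitive --- a theorem of Pedersen, proved by essentially the polar-decomposition argument you employ --- so your flat assertion that it is not transitive is inaccurate. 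Your caution is nonetheless understandable (the naive composition $x_ix_j^*$ does fail, as you observe), and your explicit intertwiners $w_{ij}=u_i(c-\delta_0)_+^{1/2}u_j^*$, with the extra $\delta_0$-cut-down and the function $g$ used to pull the bidual expressions back into $A\otimes\cK$, constitute a correct, self-contained substitute for that transitivity; the cut-down is harmless since $\ep>0$ is arbitrary. One small point, glossed in both your write-up and the paper: to get $\varphi$ mapping into $\overline{b(A\otimes \cK)b}$ rather than merely into $A\otimes\cK$, note that the image of $\varphi$ is contained in the hereditary sub-\Cs{} generated by $\varphi(1\otimes\iota)=\sum_i \tilde a_i$, which lies in $\overline{b(A\otimes \cK)b}$ because each $\tilde a_i$ does.
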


\begin{proof}
The implications (iii) $\Rightarrow$ (ii) $\Rightarrow$ (i) are clear, cf.\ Remark~\ref{rem:CM} and Lemma~\ref{lm:comp}. Let us show that (i)
implies (iii). Let $\ep > 0$ be given. By Lemma~\ref{lm:comp} (ii) there are mutually orthogonal positive elements $b_1,b_2, \dots, b_m$ in $\overline{b(A\otimes \mathcal K)b}$ such that each $b_j$ is equivalent to $(a-\ep)_+$. By the universal property of the cone $CM_m(\C)$, see Remark \ref{rem:CM}, there is a \sh{} $\varphi \colon CM_m(\C) \to \overline{b(A\otimes \mathcal K)b}$ satisfying $\varphi(e_{jj} \otimes \iota) = b_j$. Hence (iii) holds.
\end{proof}

\section{Three divisibility properties} \label{sec:3div}

\subsection*{Definitions and basic properties}
\begin{definition} \label{def:divisibility}
Let $A$ be a \Cs{} and fix $u\in \Cu(A)$.  Let $m,n\ge 1$ be integers. Then:
\begin{enumerate}
\item 
$u$ is \emph{$(m,n)$-divisible} if for every $u' \in \Cu(A)$ with $u' \ll u$ there exists $x \in \Cu(A)$ such that $mx \le u$ and $u' \le nx$.

The least $n$ such that $u$ is $(m,n)$-divisible is denoted  by $\Dv_m(u,A)$, with $\Dv_m(u,A)=\infty$ if no such $n$ exists.

\item 
$u$ is \emph{$(m,n)$-decomposable} if for every $u' \in \Cu(A)$ with $u' \ll u$ there exist elements $x_1,x_2, \dots, x_m \in \Cu(A)$ such that $x_1+x_2 + \cdots + x_m \le u$ and $u' \le nx_j$ for all $j=1,2, \dots, m$.

The least $n$ such that $u$ is $(m,n)$-decomposable is denoted  by $\ddv_m(u,A)$, with  $\ddv_m(u,A)=\infty$ if no such $n$ exists.

\item
$u$ is \emph{weakly $(m,n)$-divisible} if for every $u' \in \Cu(A)$ with $u' \ll u$ there exist elements $x_1,x_2, \dots, x_n \in \Cu(A)$ such that $mx_j \le u$ for all $j=1,2, \dots, m$ and $u' \le x_1+x_2 + \cdots + x_n$. 

The least $n$ such that $u$ is weakly $(m,n)$-divisible is denoted  by $\dv_m(u,A)$, with $\dv_m(u,A)=\infty$ if no such $n$ exists.
\end{enumerate}
\end{definition}

\begin{remark} \label{rem:divisible-unital} In the case that $u$ in Definition~\ref{def:divisibility} is compact (e.g., when $A$ is unital and $u = \langle 1_A \rangle$), the conditions above read a little easier:
\begin{enumerate}
\item 
$u$ is \emph{$(m,n)$-divisible} if  there exists $x \in \Cu(A)$ such that $mx \le u \le nx$.
\item 
$u$ is \emph{$(m,n)$-decomposable} if there exist elements $x_1,x_2, \dots, x_m \in \Cu(A)$ such that $x_1+x_2 + \cdots + x_m \le u \le nx_j$ for all $j=1,2, \dots, m$. 
\item
$u$ is \emph{weakly $(m,n)$-divisible} if there exist elements $x_1,x_2, \dots, x_n \in \Cu(A)$ such that $mx_j \le u \le x_1+x_2 + \cdots + x_n$. 
\end{enumerate}
\end{remark}

\noindent The three divisibility properties above are related as follows:
\begin{proposition} \label{prop:a} 
Let $m,n\in \N$ and $u\in \Cu(A)$. Then
$$
\dv_m(u,A)  \leq \Dv_m(u,A), \quad
\ddv_m(u,A)  \leq \Dv_m(u,A),\quad
\dv_m(u,A)   \leq \ddv_m(u,A)^m.
$$
\end{proposition}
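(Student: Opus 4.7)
The first two inequalities are immediate from the definitions. If $n = \Dv_m(u,A)$ is finite, then for every $u' \ll u$ there is $x \in \Cu(A)$ with $mx \le u$ and $u' \le nx$. Taking $x_1 = \cdots = x_n = x$ gives weak $(m,n)$-divisibility (so $\dv_m(u,A) \le n$), while taking $y_1 = \cdots = y_m = x$ gives $(m,n)$-decomposability (so $\ddv_m(u,A) \le n$), since then $y_1 + \cdots + y_m = mx \le u$ and $u' \le nx = ny_j$.

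For the third inequality, set $k = \ddv_m(u,A)$, which I may assume finite. Fix $u' \ll u$. The key observation is: if $x \in \Cu(A)$ satisfies $x \le y_j$ for every $j$ in some $(m,k)$-decomposition $y_1 + \cdots + y_m \le u$, then $mx \le y_1 + \cdots + y_m \le u$, which is precisely the condition required for weak $(m,\cdot)$-divisibility. The plan is therefore to iterate the Riesz-type property (P1) exactly $m$ times, at each step splitting every existing piece across the relation $f \le ky_i$ for the next coordinate $i$, thereby producing $k^m$ pieces each dominated by every $y_j$.

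Concretely, using axiom (A2), choose a chain $u' = w_0 \ll w_1 \ll \cdots \ll w_m \ll u$. Apply $(m,k)$-decomposability at $w_m \ll u$ to obtain $y_1, \dots, y_m \in \Cu(A)$ with $y_1 + \cdots + y_m \le u$ and $w_m \le ky_j$ for every $j$. By induction on $i = 0, 1, \dots, m$, I would construct a family $\{f^{(i)}_{\vec s}\}_{\vec s \in \{1,\dots,k\}^i}$ satisfying $f^{(i)}_{\vec s} \le w_m$, $f^{(i)}_{\vec s} \le y_j$ for $1 \le j \le i$, and $w_{m-i} \le \sum_{\vec s} f^{(i)}_{\vec s}$. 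The base case $i=0$ is given by the single element $f^{(0)}_\emptyset = w_m$. For the inductive step, use axioms (A2) and (A4) to pick refinements $\tilde f^{(i-1)}_{\vec s} \ll f^{(i-1)}_{\vec s}$ whose sum still dominates $w_{m-i}$ (possible because $w_{m-i} \ll w_{m-i+1} \le \sum_{\vec s} f^{(i-1)}_{\vec s}$), and then apply the iterated form of (P1) to split each $\tilde f^{(i-1)}_{\vec s}$ across $f^{(i-1)}_{\vec s} \le w_m \le ky_i$ into $k$ sub-pieces $g_{\vec s, t} \in \Cu(A)$ with $g_{\vec s, t} \le f^{(i-1)}_{\vec s}$ and $g_{\vec s, t} \le y_i$. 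At level $m$ one obtains $k^m$ elements $x_{\vec s}$ with $x_{\vec s} \le y_j$ for every $j$ and $u' = w_0 \le \sum_{\vec s} x_{\vec s}$, witnessing that $u$ is weakly $(m, k^m)$-divisible.

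The main obstacle is the $\ll$-bookkeeping across the $m$ iterations: the initial chain must have length $m+1$ so that at each level there is still room to pass from $f^{(i-1)}_{\vec s}$ to a strictly $\ll$-smaller approximation without losing the sum bound, which is what (A2) plus (A4) delivers. The auxiliary iterated form of (P1)—namely that $a \ll b \le c_1 + \cdots + c_n$ implies the existence of $d_i \in \Cu(A)$ with $d_i \le b$, $d_i \le c_i$, and $a \le \sum_i d_i$—follows in one step from Lemma~\ref{lm:comp}(i) by choosing a representative $b_0$ of $b$ and $\ep > 0$ with $a \le \langle (b_0-\ep)_+\rangle$, writing $(b_0-\ep)_+ = \sum_i d_i^* c_i' d_i$ for suitable $d_i$ and representatives $c_i'$ of the $c_i$, and setting $d_i := \langle d_i^* c_i' d_i \rangle$.
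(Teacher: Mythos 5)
Your proof is correct and takes essentially the same route as the paper: an $m$-step induction in which the finitely iterated form of (P1) splits each current piece across $w_m \le k\,y_i$ (the paper's $u'' \le n x_k$), with (A2)/(A4) supplying the compactly contained approximations needed to keep the running sums above $u'$, ending with $k^m$ elements each dominated by every $y_j$. The only differences are organizational — your fixed chain $u'=w_0 \ll w_1 \ll \cdots \ll w_m \ll u$ with a level-wise simultaneous refinement, versus the paper's node-wise auxiliary elements $\tilde y(i_1,\dots,i_k)$, plus your self-contained derivation of the $n$-summand version of (P1) from Lemma~\ref{lm:comp}(i).
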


\begin{proof} 
The two first inequalities are clear (take $x_i = x$ in both cases).

To prove the last inequality, suppose that  $u$ is $(m,n)$-decomposable. We show that $u$ is weakly $(m,n^m)$-divisible. Let $u'\ll u$ and find $u''$ such that $u'\ll u''\ll u$. There exist elements $x_1, \dots, x_m$ in $\Cu(A)$ such that $\sum_{i=1}^m x_i\leq u$ and $u''\leq nx_i$ for all $i$. We proceed to find elements
$$\tilde{y}(i_1,\dots, i_k), \, y(i_1, \dots, i_k) \in \Cu(A), \quad k=1, \dots, m, \, i_j=1,\dots, n,$$
satisfying
\begin{itemize}
\item[(a)] $\tilde{y}(i_1, \dots, i_k) \ll y(i_1, \dots, i_k)$,
\item[(b)] $\tilde{y}(i_1, \dots, i_{k-1}) \ll \sum_{i=1}^n y(i_1, \dots,i_{k-1}, i)$ if $k \ge 2$, and $u' \ll \sum_{i=1}^n y(i)$, 
\item[(c)] $\tilde{y}(i_1, \dots, i_{k-1}) \le \sum_{i=1}^n \tilde{y}(i_1, \dots,i_{k-1}, i)$ if $k \ge 2$, and $u' \le \sum_{i=1}^n \tilde{y}(i)$, 
\item[(d)] $y(i_1,\dots, i_k) \le x_k$,
\item[(e)] $y(i_1,\dots,i_{k-1}, i_k) \le y(i_1,\dots, i_{k-1})$ if $k\ge 2$, and $y(i) \le u''$.
\end{itemize}

The elements above are constructed inductively after $k$ using the following fact:
\begin{itemize}
\item[($*$)] if $x' \ll x \le nz$ in $\Cu(A)$, then there exist $y_1, \dots, y_n \in \Cu(A)$ such that $x' \ll \sum_{i=1}^n y_i$, $y_i \le x$, and $y_i \le z$,
\end{itemize}
which follows from Property (P1) of the Cuntz semigroup stated in the previous section.

Take first $k=1$. The existence of $y(i)$, with $i=1, \dots, n$, satisfying (b), (d) and (e) follows from ($*$) applied to $u' \ll u'' \le nx_1$. The existence of $\tilde{y}(i) \ll y(i)$ satisfying (a) and (c) then follows from Axiom (A2) of the Cuntz semigroup from the previous section. Assume that $2 \le k \le m$ and that $\tilde{y}(i_1,i_2, \dots, i_{k-1})$ and $y(i_1,i_2, \dots, i_{k-1})$ have been found. The existence of $y(i_1, \dots, i_{k-1},i)$, with $i=1, \dots, n$, satisfying (b), (d) and (e) follows from ($*$) applied to 
$$\tilde{y}(i_1, \dots, i_{k-1}) \ll y(i_1, \dots, i_{k-1}) \le nx_k.$$ 
(To see that the latter inequality holds, note that $y(i_1, \dots, i_{k-1}) \ll u''$, which follows by repeated use of (e).) The existence of 
$\tilde{y}(i_1,i_2, \dots, i_k)$ satisfying (a) and (c) follows from Axiom (A2). 

We claim that the $n^m$ elements $\big(y(i_1,\dots, i_m)\big)$ witness the weak $(m,n^m)$-divisibility of $u$. Indeed, it follows from (d) and (e) that $y(i_1,\dots, i_m) \le x_j$ for  all $j =1, \dots, m$, whence
$$m \! \cdot \! y(i_1,\dots, i_m)  \le x_1+x_2 + \cdots + x_m \le u.$$
It follows from (b) and (c) that the sum of the elements $y(i_1,\dots, i_m)$ is greater than or equal to $u'$. 
%
\end{proof}

\noindent If any of the divisibility numbers $\Dv_m(u,A)$, $\ddv_m(u,A)$, and $\dv_m(A)$ is less than $m$, then $u$ (or a multiple of $u$) must be properly infinite, as shown below. We shall pursue this and related questions in more detail in Section~\ref{sec:infinite}. 

\begin{proposition}  \label{prop:propinf} 
Let $A$ be a \Cs{} and let $u\in \Cu(A)$.
\begin{enumerate}
\item If $u$ is properly infinite, then $\Dv_m(u,A) =1$ for all integers $m \ge 1$. 
\item If $1 \le n < m$ are integers and if $u$ is either $(m,n)$-divisible, $(m,n)$-decomposable or weakly $(m,n)$-divisible, then $nu$ is properly infinite, i.e., $nu=2nu$. 
\item If $1 \le n < m$ are integers and if $u$ is compact and $(m,n)$-divisible, then $u$ is properly infinite. 
\end{enumerate}
\end{proposition}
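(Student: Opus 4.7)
Part (i) is immediate from the definition: proper infiniteness gives $mu \leq u$, so taking $x := u$ witnesses $(m,1)$-divisibility (for any $u' \ll u$, one has $u' \leq u = 1 \cdot x$). Hence $\Dv_m(u,A) = 1$.

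For part (ii), my plan is to extract from each of the three hypotheses the single uniform consequence $mu \leq nu$, and then iterate it to get proper infiniteness of $nu$. The computations are short in each case: for $(m,n)$-divisible, $mu' \leq n(mx) \leq nu$ via the witness $x$; for $(m,n)$-decomposable, sum $u' \leq nx_j$ over $j = 1,\dots,m$ to obtain $mu' \leq n \sum_{j=1}^m x_j \leq nu$; for weakly $(m,n)$-divisible, sum $mx_j \leq u$ over $j = 1,\dots,n$ to obtain $mu' \leq \sum_{j=1}^n mx_j \leq nu$. Taking the supremum over $u' \ll u$ (via axioms (A2) and (A4) of $\mathbf{Cu}$) yields $mu \leq nu$. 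Since $m \geq n+1$, this gives $(n+1)u \leq mu \leq nu$, and adding $u$ to both sides repeatedly produces $(n+k)u \leq nu$ for all $k \geq 0$. Setting $k = n$ is precisely $2(nu) \leq nu$, which is proper infiniteness of $nu$.

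For part (iii), compactness of $u$ produces a single $x$ with $mx \leq u \leq nx$ (Remark~\ref{rem:divisible-unital}). This extra information is essential, since part (ii) only delivers proper infiniteness of $nu$, not of $u$ itself. My plan is to prove the stronger statement $jx \leq u$ for every $j \geq 1$, from which $2u \leq 2(nx) = 2nx \leq u$ follows at once. The bound $jx \leq mx \leq u$ is automatic for $j \leq m$. For $j > m$, the key computation is $(m+1)x = mx + x \leq nx + x = (n+1)x \leq mx \leq u$, using $mx \leq nx$ together with $n+1 \leq m$; this bootstraps inductively to all larger $j$ by adding $x$ repeatedly.

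I do not foresee a serious obstacle: the whole argument is arithmetic bookkeeping plus the standard trick of adding a common element to both sides to sidestep non-cancellativity in $\Cu(A)$. The only conceptual point worth flagging is that (iii) does not reduce to (ii) — proper infiniteness of $nu$ is strictly weaker than that of $u$ — and it is here that compactness genuinely feeds into the argument through Remark~\ref{rem:divisible-unital}.
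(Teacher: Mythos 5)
Your proposal is correct and follows essentially the same route as the paper: part (i) is the same observation, part (ii) derives $mu'\le nu$ for all $u'\ll u$ from each hypothesis and iterates after passing to the supremum, and part (iii) uses compactness to get a single $x$ with $mx\le u\le nx$ and iterates $mx\le nx$, just as the paper does (your bookkeeping $jx\le u$ for all $j$ is merely an explicit variant of the paper's conclusion that $nx$ is properly infinite, followed by $2u\le 2nx\le nx\le mx\le u$).
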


\begin{proof} 
(i). If $u$ is properly infinite, then $mu \le u$ for all $m$, whence $\Dv_m(u,A) =1$ .

(ii). Assume that $u$ is weakly $(m,n)$-divisible and take $u'\ll u$. Then there exist $x_1, \dots, x_n$ such that $mx_i\leq u$ for all $i$, and $u'\leq\sum_{i=1}^n  x_i$. Thus,
\[
mu'\leq \sum_{i=1}^n mx_i\leq nu.
\] As this holds for all $u'\ll u$, we get  $((m-n) +n)u = mu \leq nu$. This entails that $(k(m-n)+n)u \leq nu$ for all positive integers $k$, whence $\ell u \leq nu$ for all $\ell \ge n$. In particular, $2nu \le nu$, which implies that $nu$ is properly infinite.

Next, suppose that $u$ is $(m,n)$-decomposable and let $u'\ll u$. Then 
there exist $x_1, \dots, x_m$ such that 
$\sum_{i=1}^m x_i\leq u$ and  $u'\leq nx_i$ for all $i$. Thus,
\[
mu'\leq n\sum_{i=1}^m x_i\leq nu.
\] 
Arguing as before, we conclude that $nu$ is properly infinite.

Finally note that if $u$ is $(m,n)$-divisible, then it is both $(m,n)$-decomposable and weakly $(m,n)$-divisible, whence $nu$ is properly infinite. 

(iii). Since  $\Dv_m(u,A) = n < m$ and $u\ll u$, there exists $x$ such that $mx \le u \le nx$. Arguing as above this implies that  $nx$ is properly infinite. 
\end{proof}

\begin{remark} \label{rem:image}
By functoriality, each \sh{} $\varphi\colon A\to B$ between \Cs s $A$ and $B$ induces a morphism  $\Cu(\varphi)\colon \Cu(A)\to \Cu(B)$ which preserves order, addition, and the relation of compact containment. Thus, for each $u\in \Cu(A)$, and with $v=\Cu(\varphi)(u)$, we have:
$$
\Dv_m(v,B)  \le  \Dv_m(u,A),\quad
\ddv_m(v,B) \le \ddv_m(u,A),\quad
\dv_m(v,B) \le \dv_m(u,A). 
$$
In particular, if $A$ and $B$ are unital \Cs s, and if $\Dv_m(\langle 1_B \rangle,B) > \Dv_m(\langle 1_A \rangle, A)$ for some $m$ (or if the corresponding inequality holds for one of the other two divisibility numbers), then one can not find a unital embedding of $A$ into $B$. Divisibility numbers thus serve as an obstruction for embedding a unital \Cs{} with nice divisibility properties into a unital \Cs{} with less nice divisibility properties. 
\end{remark}

\noindent
The three divisibility properties behave well with respect to inductive limits thanks to the sequential continuity of the functor $\Cu(\cdot)$:

\begin{proposition} \label{prop:inductive-limits}
Let $A=\varinjlim (A_i,\varphi_{i,j})$ be a sequential inductive limit of \Cs. Let $u\in \Cu(A_1)$ and, for each $i$, denote by
$u_i\in \Cu(A_i)$ and $u_\infty\in \Cu(A)$ the images of $u$ in $\Cu(A_i)$ and $\Cu(A)$, respectively. Then:
$$
\Dv_m(u_\infty,A) \leq \inf_i \Dv_m(u_i,A_i),\qquad
\ddv_m(u_\infty,A) \leq \inf_i \ddv_m(u_i,A_i), 
$$
$$
\dv_m(u_\infty,A)  \leq  \inf_i\dv_m(u_i,A_i). 
$$
If $u$ is compact (i.e., if $u\ll u$), then the above inequalities are equalities.
\end{proposition}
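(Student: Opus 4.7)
The inequalities are immediate from Remark~\ref{rem:image}: since $u_\infty=\Cu(\varphi_{i,\infty})(u_i)$ for every $i$, we obtain
\[
\Dv_m(u_\infty,A)\leq \Dv_m(u_i,A_i),\quad
\ddv_m(u_\infty,A)\leq \ddv_m(u_i,A_i),\quad
\dv_m(u_\infty,A)\leq \dv_m(u_i,A_i),
\]
and taking infima over $i$ yields the claim.

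For the reverse inequalities when $u$ is compact, I will detail the argument for $\Dv_m$; the cases of $\ddv_m$ and $\dv_m$ are entirely parallel, the only difference being that one lifts a finite tuple of witnesses rather than a single $x$. Note first that $u_i$ and $u_\infty$ remain compact, since morphisms in $\mathbf{Cu}$ preserve $\ll$. Let $n=\Dv_m(u_\infty,A)<\infty$; by Remark~\ref{rem:divisible-unital}, there is $x\in\Cu(A)$ with $mx\leq u_\infty\leq nx$. Using Proposition~\ref{prop:Culimits}(1), write $x=\sup_k x^{(k)}$ with $x^{(k)}=\Cu(\varphi_{j_k,\infty})(z^{(k)})$ for some $z^{(k)}\in\Cu(A_{j_k})$. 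By axiom (A4), $nx=\sup_k nx^{(k)}$; compactness of $u_\infty$ together with $u_\infty\leq nx$ and the standard consequence of the $\mathbf{Cu}$-axioms that $u\ll u\leq v$ implies $u\ll v$ now yields $u_\infty\leq nx^{(k_0)}$ for some $k_0$.

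The main obstacle is that Proposition~\ref{prop:Culimits}(2) lifts an inequality $\Cu(\varphi_{j,\infty})(u)\leq \Cu(\varphi_{j,\infty})(v)$ only for elements strictly below $u$, whereas $mz^{(k_0)}$ is not known to be compact. To cope, one lifts the larger element $x^{(k_0+1)}$ and then steps down inside $\Cu(A_j)$. Set $j=j_{k_0+1}$ and $z=z^{(k_0+1)}$, so $\Cu(\varphi_{j,\infty})(z)=x^{(k_0+1)}$, and let $(z_s)$ with $z_s\ll z_{s+1}$ and $\sup_s z_s=z$ be a sequence furnished by axiom (A2). Since morphisms in $\mathbf{Cu}$ preserve suprema of increasing sequences, $nx^{(k_0+1)}=\sup_s n\,\Cu(\varphi_{j,\infty})(z_s)$; the same standard fact as above then yields $s_0$ with $u_\infty\leq n\,\Cu(\varphi_{j,\infty})(z_{s_0})$. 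Setting $z'=z_{s_0}$, we have $z'\ll z$ and
\[
m\,\Cu(\varphi_{j,\infty})(z')\leq mx\leq u_\infty\leq n\,\Cu(\varphi_{j,\infty})(z').
\]

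It remains to lift both inequalities into a single $\Cu(A_\ell)$. Compactness of $u_j$ together with $\Cu(\varphi_{j,\infty})(u_j)=u_\infty\leq \Cu(\varphi_{j,\infty})(nz')$ and Proposition~\ref{prop:Culimits}(2) furnishes $\ell_1\geq j$ with $u_{\ell_1}\leq n\,\Cu(\varphi_{j,\ell_1})(z')$. For the other direction, axiom (A3) gives $mz'\ll mz$, and $\Cu(\varphi_{j,\infty})(mz)=mx^{(k_0+1)}\leq mx\leq u_\infty=\Cu(\varphi_{j,\infty})(u_j)$ lets Proposition~\ref{prop:Culimits}(2) produce $\ell_2\geq j$ with $m\,\Cu(\varphi_{j,\ell_2})(z')\leq u_{\ell_2}$. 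Taking $\ell=\max(\ell_1,\ell_2)$ and $\tilde z=\Cu(\varphi_{j,\ell})(z')$ yields $m\tilde z\leq u_\ell\leq n\tilde z$, so $\Dv_m(u_\ell,A_\ell)\leq n$, as required.
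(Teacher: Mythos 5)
Your proof is correct and takes essentially the same route as the paper's: use Proposition~\ref{prop:Culimits}~(i) together with compactness of $u_\infty$ to push the witness $x$ down to a finite stage, use Axiom (A2) to replace it by a compactly contained element $z'\ll z$ still satisfying $u_\infty\le n\,\Cu(\varphi_{j,\infty})(z')$, and then apply Proposition~\ref{prop:Culimits}~(ii) twice (once via compactness of $u_j$, once via $mz'\ll mz$) to lift both inequalities to some $\Cu(A_\ell)$. Your only deviation is to make explicit the step the paper states loosely---that $x$ must first be replaced by a member of an approximating sequence lying in the image of some $\Cu(\varphi_{j,\infty})$---which is a clarification rather than a different argument.
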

\begin{proof}
We will only prove the statements above in the former case; the proofs for the two other cases are similar. 

The inequalities $\Dv_m(u_\infty,A) \leq \Dv_m(u_i,A_i)$, with $i=1,2,\dots$, follow from 
Remark~\ref{rem:image}. Suppose now that $u$ is compact. Set $\Dv_m(u_\infty,A)=n$. Then there exists 
$x\in \Cu(A)$ such that $mx\le u_\infty \le nx$.
By Proposition \ref{prop:Culimits} (i) and compactness of $u_\infty$, it follows that $x$ is the image of some $y\in \Cu(A_i)$ for some $i$. By Axiom (A2) of the Cuntz semigroup and by compactness of $u_\infty$ there exists $y'\ll y$ in $\Cu(A_i)$ such that
$u_\infty \le n\, \Cu(\varphi_{i,\infty})(y')$. Since the $u_i$'s are compact,  Proposition \ref{prop:Culimits} (ii) implies that there exists $j>i$ such that
$$m \, \Cu(\varphi_{i,j})(y') \le u_j \le n \, \Cu(\varphi_{i,j})(y').$$
Thus $u_{j}$ is $(m,n)$-divisible in $\Cu(A_{j})$.
\end{proof}

\begin{definition} \label{def:divA}
Let $A$ be a $\sigma$-unital \Cs. Then $A$ contains a strictly positive element. This element represents a class in $\Cu(A)$, which is independent of the choice of the strictly positive element, and which we shall denote by $\langle A \rangle$. If $A$ is unital, then $\langle A \rangle = \langle 1_A \rangle$. We shall write $\dv_m(A)$, $\ddv_m(A)$, and $\Dv_m(A)$ for $\dv_m(\langle A \rangle, A)$, $\ddv_m(\langle A \rangle, A)$, and $\Dv_m(\langle A \rangle, A)$, respectively.
\end{definition}

\noindent
If $A$ and $B$ are unital \Cs s such that there exist unital \sh s $A \to B$ and $B \to A$, then, by Remark~\ref{rem:image}, we must have
$$\Dv_m(A) = \Dv_m(B), \quad \ddv_m(A) = \ddv_m(B), \quad \dv_m(A) = \dv_m(B)$$
for all $m$. This applies in particular to the situation where $A$ is any unital \Cs{} and $B = A \otimes D$ for some unital \Cs{} $D$ which has a character. In general, if $D$ is any unital \Cs, possibly without characters, the divisibility numbers associated with $A \otimes D$ are smaller than or equal to those of $A$.

\subsection*{Examples and remarks}

\noindent
Let us first examine the divisibility numbers for matrix algebras:
\begin{example} \label{rem:matrix}
Let $m \ge 2$ and $k \ge 2$ be integers. 
Using that 
\[
\big(\Cu(M_k(\C)), \langle 1 \rangle \big) \cong \big(\{0,1,2,3, \dots, \infty\}, k\big), 
\]
an elementary algebraic argument yields that 
\[
\dv_m(M_k(\C)) = \ddv_m(M_k(\C))=\Dv_m(M_k(\C)),\] 
and
\begin{equation} \label{eq:matrix}
\Dv_m(M_k(\C)) = \begin{cases} 
\left\lceil \frac{k}{\left\lfloor \frac{k}{m}\right\rfloor}\right\rceil, & \text{if} \; m \leq k, \\
\infty, & \text{if} \; m > k.
\end{cases}
\end{equation}
Here $\lceil\cdot \rceil$ and $\lfloor\cdot \rfloor$ are the ``ceiling" and ``floor" functions. 
In particular,  $\Dv_m(M_k(\C))=m$ if and only if $m \!\mid \! k$, and $\Dv_m(M_k(\C))=m+1$
if $m \! \nmid \! k$ and $m(m-1)\leq k$.
\end{example}

\begin{definition}[The rank of a \Cs] \label{def:rank}
Let $A$ be a \Cs. Let $\rnk(A)$ denote the smallest positive integer $n$ for which $A$ has an irreducible representation on a Hilbert space of dimension $n$, and set $\rnk(A) = \infty$ if $A$ has no finite dimensional (irreducible) representation. 
\end{definition}

\noindent Note that $\rnk(A) = 1$ if and only if $A$ has a character.  We remind the reader about the following classical result due to Glimm:

\begin{proposition}[Glimm] \label{prop:Glimm}
Let $A$ be a (not necessarily unital) \Cs. Then there is a non-zero \sh{} $CM_n(\C) \to A$ if and only if $A$ admits at least one irreducible representation on a Hilbert of dimension $\ge n$. 
\end{proposition}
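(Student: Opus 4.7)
For the forward implication, I would start with a non-zero \sh{} $\varphi\colon CM_n(\C)\to A$ and pass to the elements $a_i=\varphi(e_{ii}\otimes\iota)$, $i=1,\dots,n$. By Remark~\ref{rem:CM} these are pairwise orthogonal positive contractions, and the elements $x_i=\varphi(e_{1i}\otimes\iota^{1/2})\in A$ satisfy $x_ix_i^*=a_1$ and $x_i^*x_i=a_i$, exhibiting the $a_i$'s as pairwise equivalent in the sense of Remark~\ref{rem:CM}. I would then choose an irreducible representation $\pi\colon A\to B(H)$ with $\pi(a_1)\neq 0$; equivalence forces $\pi(a_i)=\pi(x_i)^*\pi(x_i)\neq 0$ for all $i$. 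The $n$ mutually orthogonal non-zero positive operators $\pi(a_1),\dots,\pi(a_n)$ on $H$ have pairwise orthogonal non-zero support projections, so $\dim H\geq n$.

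For the converse, suppose $\pi\colon A\to B(H)$ is irreducible with $\dim H\geq n$. I would pick orthonormal $\xi_1,\dots,\xi_n\in H$, set $V=\Span\{\xi_1,\dots,\xi_n\}$, and invoke the matricial form of Kadison's transitivity theorem to produce elements $v_{ij}\in A$ with $v_{ij}^*=v_{ji}$ and $\pi(v_{ij})\xi_k=\delta_{jk}\xi_i$ for all $k$. Each $v_{ii}$ is then a non-zero positive contraction, and applying $\pi$ and restricting to $V$ yields the expected matrix-unit identities $\pi(v_{ij})\pi(v_{k\ell})|_V=\delta_{jk}\pi(v_{i\ell})|_V$; so $v_{11},\dots,v_{nn}$ already behave as pairwise orthogonal and pairwise equivalent positive contractions, \emph{but only on $V$}.

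The main obstacle is that these matrix-unit identities typically fail in $A$ itself: $v_{ii}v_{jj}$ need not vanish for $i\neq j$, and $v_{ii}$ need not be equivalent to $v_{jj}$ in $A$. To upgrade ``relations on $V$'' to exact relations in $A$, my plan is to combine a spectral cut-off with a perturbation/lifting step. Concretely, I would replace each $v_{ii}$ by $g(v_{ii})$ for a continuous function $g\colon[0,1]\to[0,1]$ with $g(0)=0$, $g(1)=1$, concentrated near $1$, and renormalise the off-diagonal $v_{ij}$ through continuous functional calculus so that they lie in the appropriate hereditary subalgebras generated by $g(v_{ii})$ and $g(v_{jj})$. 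The resulting system of approximate matrix units in $A$ satisfies the defining relations of $CM_n(\C)$ up to an arbitrarily small error, after which a lifting step (available via the projectivity of the cone $CM_n(\C)$) produces a genuine \sh{} $\varphi\colon CM_n(\C)\to A$; it is non-zero because $\pi\circ\varphi(e_{11}\otimes\iota)$ is non-zero already on $V$.
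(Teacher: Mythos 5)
The paper offers no proof of this statement (it is quoted as a classical theorem of Glimm), so your argument has to stand on its own. Your forward implication is complete and correct: since $e_{11}\otimes\iota$ generates $CM_n(\C)$ as an ideal, $a_1\neq 0$; the elements $x_i=\varphi(e_{1i}\otimes\iota^{1/2})$ witness $a_i\sim a_1$, so any irreducible representation with $\pi(a_1)\neq 0$ has $\pi(a_i)\neq 0$ for all $i$, and $n$ mutually orthogonal non-zero positive operators force $\dim H\ge n$.

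The converse has a genuine gap, and it sits exactly where you flag the ``main obstacle''. Kadison transitivity controls the $v_{ij}$ only on $V$, and your repair uses no information beyond their action on $V$; but elements with that action can be irreparably bad off $V$. Concretely, nothing in what you have assumed prevents $\pi(v_{11})=P_{\xi_1}+Q$ and $\pi(v_{22})=P_{\xi_2}+Q$ for one and the same projection $Q$ orthogonal to $V$ (take $A=B(H)$ and these operators themselves to see that the hypotheses you extracted are consistent with this); then for any $g$ with $g(0)=0$, $g(1)=1$ one gets $g(v_{11})g(v_{22})=Q$, of norm $1$, so the cut-downs are not even approximately orthogonal, and there is likewise no candidate for intertwiners witnessing $g(v_{11})\sim g(v_{22})$ \emph{in norm}. (Orthogonality alone is cheap: take $h=h^*$ with $\pi(h)\xi_k=k\xi_k$ and disjointly supported functions $f_k(h)$; it is the pairwise \emph{equivalence} that is the real content, and that is precisely Glimm's lemma.) The standard proof is genuinely more involved: one splits according to whether $\pi(A)$ contains the compacts. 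If $\pi(A)\supseteq K(H)$, honest finite-rank matrix units lie in $\pi(A)\cong A/\ker\pi$, and the resulting $^*$-homomorphism $CM_n(\C)\to A/\ker\pi$ lifts to $A$ by projectivity of the cone. If $\pi(A)\cap K(H)=0$, one needs Glimm's inductive construction of approximate matrix units (a recursive choice of many vectors and elements, not a one-shot transitivity argument on a single $V$), followed by a perturbation to exact relations; note also that this last step is furnished by the \emph{stability} of the generators and relations of $CM_n(\C)$ (equivalently its semiprojectivity), not by projectivity as such, which only lifts exact homomorphisms through quotients. So the skeleton ``transitivity on $V$, cut off near $1$, renormalize, lift'' does not assemble into a proof without the case dichotomy and the approximate-matrix-unit machinery.
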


\noindent
It follows from Remark \ref{rem:image} that $\dv_m(A)$, $\ddv_m(A)$, and $\Dv_m(A)$ are greater than or equal to $\Dv_m(M_n(\C))$ if $\rnk(A) = n$. In particular, these three quantities are infinite when $m > \rnk(A)$. 

\begin{example}[Simple \Cs s]  \label{ex:simple}
If $A$ is a simple, unital, infinite dimensional \Cs, then $\Dv_m(A)$, $\ddv_m(A)$, and $\dv_m(A)$ are finite for all positive integers $m$. Indeed, by the assumption that $A$ is infinite dimensional, it follows that there is a non-zero $x \in \Cu(A)$ such that $mx \le \langle 1_A \rangle$. As every simple unital \Cs{} is algebraically simple, it follows that $\langle 1_A \rangle \le nx$ for some positive integer $n$, i.e., $\langle 1_A \rangle$ is $(m,n)$-divisible. Hence $\Dv_m(A) \le n$, which entails that also $\ddv_m(A) \le n$ and $\dv_m(A) \le n$. 
\end{example}

\begin{example}
The dimension drop \Cs{} $Z_{p,q}$, associated with the positive integers $p$ and $q$, is defined to be
$$Z_{p,q} = \{f \in C([0,1],M_p \otimes M_q \mid f(0) \in M_p \otimes \C 1_q, \; \; f(1) \in \C 1_p \otimes M_q \}.$$
Note that $\rnk(Z_{p,q}) = \min\{p,q\}$. 
It was shown in \cite[Lemma 4.2]{Ror:Z} (and its proof) that $\Dv_m(Z_{m,m+1}) = m+1$. 
By Remark~\ref{rem:image}, it follows that if $Z_{m,m+1}$ maps unitally into $A$, then $\Dv_m(A) \le m+1$.   Moreover,  as shown in \cite[Proposition 5.1]{RorWin:Z},  if $A$ is a unital \Cs{} of stable rank one, then $\Dv_m(A) \le m+1$ if and only if $Z_{m,m+1}$ maps  unitally into $A$. 
\end{example}

\begin{remark}[Almost divisibility] 
The property ``almost divisibility" of a \Cs{} is expressed by saying that $\Dv_m(A) \le m+1$ for all integers $m \ge 1$. If every dimension drop algebra $Z_{m,m+1}$ maps unitally into $A$, or if the Jiang-Su algebra maps unitally into $A$, then $A$ is almost divisible. 
\end{remark}

\begin{remark}[Non-embeddability of the Jiang-Su algebra] \label{rem:J-S}
It was shown in \cite{DadHirTomsWin} that there is a simple unital infinite dimensional nuclear \Cs{} $A$ such that the dimension drop \Cs{} $Z_{3,4}$, and hence the Jiang-Su algebra $\cZ$, do not embed unitally into $A$. The divisibility properties of $A$ were not explicitly mentioned in \cite{DadHirTomsWin}, but it is easily seen (using Lemma \ref{lm:Z}, that is paraphrased from \cite[Lemma 4.3]{Ror:Z}) that $\Dv_3(A) > 4$.
We shall in Section \ref{sec:obstruct} give further examples of simple unital infinite dimensional \Cs s where the divisibility numbers attain non-trivial values.
\end{remark}

\begin{remark}[Real rank zero \Cs s] \label{ex:rr0}
It was shown in \cite[Proposition 5.7]{PerRor:AF} that if $A$ is a unital \Cs{} of real rank zero then
$A$ admits a unital embedding of a finite dimensional \Cs{} of rank at least $n$ if and only if $\rnk(A) \ge n$. Combining this with Remark \ref{rem:matrix} we see that $\Dv_m(A) \le m+1$ whenever $A$ is a unital \Cs{} of real rank zero and with $\rnk(A) \ge m(m-1)$. 
In particular, every unital \Cs{} $A$ of real rank zero and with $\rnk(A) = \infty$ is almost divisible.
\end{remark}

\noindent
Kirchberg considered in \cite{Kir:Abel} a covering number of a unital \Cs{} $B$. 
Let us recall the definition: 

\begin{definition}[Kirchberg] \label{def:cov}
Let $m\in \N$. The covering number of a unital \Cs{} $B$, denoted by $\mathrm{cov}(B,m)$,  is the least positive integer $n$ such that there exist finite dimensional \Cs s $F_1, F_2, \dots, F_n$ with $\rnk(F_i) \ge m$, $^*$-homomorphisms  $\varphi_i\colon CF_i\to B$, and $d_1,d_2,\dots,d_n\in B$ such that
$1_B=\sum_{i=1}^n d_i^*\varphi_i(1_{F_i} \otimes \iota) d_i$.
\end{definition}

\noindent
Kirchberg's covering number $\mathrm{cov}(B,m)$ relates to our $\dv_m(B)$ as follows.
\begin{proposition} \label{prop:cov}
Let $B$ be a unital \Cs{} and let $m$ be a positive integer.
\begin{enumerate}
\item $\mathrm{cov}(B,m)$ is the least $n$ for which there exist $x_1,x_2, \dots, x_n \in \Cu(B)$ such that
\begin{equation} \label{eq:cov}
x_i\leq \langle 1_B \rangle\leq x_1+x_2+ \cdots + x_n, \qquad x_i=\sum_{j=i}^{k_i} m_{ij}y_{ij}
\end{equation}
for some 
integers $m_{ij}\geq m$, some positive integers $k_i$, and some $y_{ij} \in \Cu(A)$.
\item $\mathrm{cov}(B,m) \le \dv_m(B)\leq (2m-1)\, \mathrm{cov}(B,m)$.
\end{enumerate}
\end{proposition}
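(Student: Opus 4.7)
For part (i), the plan is to translate between $^*$-ho\-mo\-mor\-phisms $\varphi_i\colon CF_i \to B$ and Cuntz-semigroup data. Writing $F_i = \bigoplus_{j=1}^{k_i} M_{m_{ij}}(\C)$ with each $m_{ij} \ge m$, I set $x_i := \langle \varphi_i(1_{F_i}\otimes\iota)\rangle$ and $y_{ij} := \langle \varphi_i(e_{11}^{(j)}\otimes\iota)\rangle$. Orthogonality and pairwise equivalence of the matrix units $e_{kk}^{(j)}$ in $M_{m_{ij}}(\C)$ give $x_i = \sum_j m_{ij} y_{ij}$, the inequality $x_i \le \langle 1_B\rangle$ is automatic, and Lemma~\ref{lm:comp}(i) converts the covering identity $1_B = \sum_i d_i^*\varphi_i(1_{F_i}\otimes\iota) d_i$ into $\langle 1_B\rangle \le \sum_i x_i$. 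For the converse, given Cuntz data satisfying \eqref{eq:cov}, I apply Lemma~\ref{lm:comp}(ii) to $\sum_j m_{ij} y_{ij} = x_i \le \langle 1_B\rangle$ to produce, for each $\ep > 0$, pairwise orthogonal positive elements of $B$ realising $(y_{ij}-\ep)_+$ with multiplicities $m_{ij}$; Remark~\ref{rem:CM} then assembles these into a $^*$-ho\-mo\-mor\-phism $\varphi_i^{(\ep)}\colon CF_i \to B$. Since $\langle 1_B\rangle$ is compact (as the class of a projection) and $\sum_j m_{ij}\langle (y_{ij}-\ep)_+\rangle \uparrow x_i$ as $\ep\to 0$ (via axiom (A4) and the standard fact $(y-\ep)_+\uparrow y$), for $\ep$ small enough we have $\langle 1_B\rangle \le \sum_i \langle \varphi_i^{(\ep)}(1_{F_i}\otimes\iota)\rangle$; a further application of Lemma~\ref{lm:comp}(i), combined with the identity $(1_B-\delta)_+ = (1-\delta) 1_B$, upgrades this Cuntz inequality to an exact algebraic covering of $1_B$.

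Part (ii), first inequality: if $y_1,\dots,y_n$ witness weak $(m,n)$-divisibility of $\langle 1_B\rangle$, then $x_i := m y_i$, $k_i := 1$, $m_{i1} := m$, $y_{i1} := y_i$ satisfy \eqref{eq:cov}, so (i) gives $\mathrm{cov}(B,m) \le n = \dv_m(B)$. For the second inequality, set $n := \mathrm{cov}(B,m)$ and use (i) to extract Cuntz data $x_i, y_{ij}, m_{ij}$. The key move is to first reduce to the case $m \le m_{ij} \le 2m-1$: whenever $m_{ij} \ge 2m$, replace the summand $m_{ij} y_{ij}$ by $m y_{ij} + (m_{ij}-m) y_{ij}$ (both coefficients remain $\ge m$), and iterate. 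With all coefficients now in $[m,2m-1]$, let $M_i := \max_j m_{ij} \le 2m-1$ and form the layered elements
\[
z_i^{(l)} := \sum_{j:\, m_{ij} \ge l} y_{ij}, \qquad 1 \le l \le M_i.
\]
A direct count yields $\sum_{l=1}^{M_i} z_i^{(l)} = \sum_j m_{ij} y_{ij} = x_i$, and each layer satisfies $m z_i^{(l)} \le \sum_{j:\, m_{ij} \ge l} m_{ij} y_{ij} \le x_i \le \langle 1_B\rangle$. Hence the $\sum_i M_i \le (2m-1) n$ elements $z_i^{(l)}$ witness weak $(m,(2m-1) n)$-divisibility of $\langle 1_B\rangle$.

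The main obstacle is the converse of (i): passing from a Cuntz-semigroup inequality back to an exact algebraic covering identity in $B$. This step relies on the $\ep$-approximations in Lemma~\ref{lm:comp}, the compactness of $\langle 1_B\rangle$, and the fact that $1_B$ is a projection. The remaining parts are either a transparent bookkeeping reformulation (the first inequality of (ii)) or a short combinatorial split-and-layer argument (the second inequality).
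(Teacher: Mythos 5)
Your proof is correct and follows essentially the same route as the paper's: part (i) is the same translation back and forth via Lemma~\ref{lm:comp} and the universal property of the cone (Remark~\ref{rem:CM}), with your explicit appeal to compactness of $\langle 1_B\rangle$ and axiom (A4) merely spelling out an $\ep$-cut-down step that the paper leaves implicit. In part (ii), your splitting of any coefficient $m_{ij}\ge 2m$ into summands lying in $[m,2m-1]$ followed by the layered regrouping $z_i^{(l)}=\sum_{j:\,m_{ij}\ge l}y_{ij}$ is the same reduction and ``suitable subsets'' decomposition used in the paper.
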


\begin{proof}
(i). Assume that $n \ge \mathrm{cov}(B,m)$ and let $F_i$, $\varphi_i \colon CF_i \to B$, and $d_i \in B$ be as in Definition \ref{def:cov}. Write $F_i = \bigoplus_{j=1}^{k_i} M_{m_{ij}}(\C)$ with $m_{ij} \ge m$. Let $e^{(ij)}$ be a one-dimensional projection in $M_{m_{ij}}(\C)$. It then follows from Lemma~\ref{lm:comp} that the elements
$$x_i = \langle \varphi_i(1_{F_i} \otimes \iota) \rangle, \qquad y_{ij} = \langle \varphi_i(e^{(ij)} \otimes \iota) \rangle$$
satisfy the relations in \eqref{eq:cov}. 

Suppose, conversely, that $n \ge 1$ is chosen such that there are elements $x_i$ and $y_{ij}$ in $\Cu(B)$ satisfying \eqref{eq:cov}. Put $F_i = \bigoplus_{j=1}^{k_i} M_{m_{ij}}(\C)$. By the assumption that $\sum_{j=i}^{k_i} m_{ij}y_{ij} \le \langle 1_B \rangle$ it follows from Lemma~\ref{lm:comp} (ii) that there are mutually orthogonal positive elements $a_{ijr}$ in $B$, where $1 \le i \le n$, $1 \le j \le k_i$, $1 \le r \le m_{ij}$, such that $\langle a_{ijr} \rangle = y_{ij}$. We can further assume that the $r$ positive elements $a_{ij1}, \dots, a_{ijr}$ are pairwise equivalent. It then follows from the universal property of the cone over a matrix algebra (see Remark~\ref{rem:CM}) that there are \sh s $\varphi_i \colon CF_i \to B$ such that $\langle \varphi_i(e^{(ij)} \otimes \iota) \rangle= y_{ij}$, where $e^{(ij)}$ is a one-dimensional projection in the summand $ M_{m_{ij}}(\C)$ of $F_i$. The existence of $d_i \in B$ with $1_B=\sum_{i=1}^n d_i^*\varphi_i(1_{F_i} \otimes \iota) d_i$ follows from Lemma~\ref{lm:comp} (i). Thus $\mathrm{cov}(B,m) \le n$.

(ii). To prove the first inequality, assume that $\dv_m(B) = n < \infty$ and take $y_1, \dots, y_n$ such that $my_j \le \langle 1_B \rangle \le y_1 + \cdots + y_n$. Then \eqref{eq:cov} holds with $k_i = 1$ and $x_i = my_i$.

Assume next that $\mathrm{cov}(B,m) = n < \infty$, and find elements $x_i$ and $y_{ij}$ satisfying the relations in \eqref{eq:cov}. Upon replacing $y_{ij}$ with an integral multiple of $y_{ij}$ we can assume that $m \le m_{ij} < 2m$ for all $i$ and $j$. Let $z_{ik}$, $1 \le k \le 2m-1$, be the sum of a suitable subset of the $y_{ij}$'s such that
$\sum_{k=1}^{2m-1} z_{ik} =  \sum_{j=i}^{k_i} m_{ij}y_{ij} = x_i$. The $(2m-1)n$ elements $(z_{ik})$ will then witness that $\dv_m(B) \le (2m-1)n$.
\end{proof}

\section{The asymptotic divisibility numbers} \label{sec:stable-div}
One can collect the sequence of divisibility numbers $\big(\Dv_m(A)\big)_{m=2}^\infty$ of a unital \Cs{} $A$ into a single divisibility number as follows:
$$
\Dv_*(A) =  \liminf_{m \to \infty} \frac{\Dv_m(A)}{m}.
$$
In a similar way one can define $\ddv_*(A)$ and $\dv_*(A)$. Propositions \ref{prop:almdiv} and \ref{lm:dv-stable} below hold verbatim for those quantities as well. However, to keep the exposition bounded, we only treat the case of ``$\Dv$".

It follows from Proposition~\ref{prop:propinf} that $\Dv_*(A) = 0$ if and only if $A$ is properly infinite and that $\Dv_*(A) \ge 1$ if $A$ is not properly infinite. 

\begin{proposition} \label{prop:almdiv} Let $A$ be a unital \Cs. 
\begin{enumerate}
\item  $\Dv_m(A) \le m \, \Dv_*(A)+1$ for all integers $m \ge 2$.
\item $\Dv_*(A) = \lim_{m \to \infty} \Dv_m(A)/m$ (the limit always exists, but is possibly equal to $\infty$).
\item If $A$ is not properly infinite, then $\Dv_*(A) = 1$ if and only if $\Dv_m(A) \le m+1$ for all integers $m \ge 2$.
\end{enumerate}
\end{proposition}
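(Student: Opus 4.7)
The plan is to reduce all three parts to a single algebraic claim: for integers $k, m \ge 2$ with $k \ge m$, one has
\[
\Dv_m(A) \le \left\lceil \Dv_k(A)\big/\lfloor k/m\rfloor \right\rceil.
\]
To prove this, assume $\Dv_k(A) = n < \infty$ and pick $x \in \Cu(A)$ with $kx \le \langle 1_A \rangle \le nx$ (such $x$ exists by compactness of $\langle 1_A\rangle$, cf.\ Remark~\ref{rem:divisible-unital}). Set $q = \lfloor k/m \rfloor$ and $n' = \lceil n/q \rceil$. Then $m(qx) = (mq)x \le kx \le \langle 1_A \rangle$ since $mq \le k$, and $n'(qx) = (n'q)x \ge nx \ge \langle 1_A \rangle$ since $n'q \ge n$; hence $\langle 1_A \rangle$ is $(m, n')$-divisible, so $\Dv_m(A) \le n'$. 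This is the key step; everything else is asymptotic bookkeeping.

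For part (i), the case $\Dv_*(A) = \infty$ is vacuous, so assume $\Dv_*(A) < \infty$ and pick $k_j \to \infty$ with $\Dv_{k_j}(A)/k_j \to \Dv_*(A)$ (in particular $\Dv_{k_j}(A) < \infty$). Applying the claim with $k = k_j$ together with the bounds $\lceil t \rceil \le t + 1$ and $\lfloor k_j/m\rfloor \ge (k_j - m)/m$ yields
\[
\Dv_m(A) \le \frac{\Dv_{k_j}(A)}{\lfloor k_j/m\rfloor} + 1 \le \frac{m \, \Dv_{k_j}(A)}{k_j - m} + 1.
\]
Letting $j \to \infty$, the right-hand side converges to $m\Dv_*(A) + 1$, which proves (i).

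Part (ii) then follows at once: the inequality from (i) gives $\Dv_m(A)/m \le \Dv_*(A) + 1/m$, so $\limsup_m \Dv_m(A)/m \le \Dv_*(A)$, which combined with the definition $\Dv_*(A) = \liminf_m \Dv_m(A)/m$ forces the limit to exist and equal $\Dv_*(A)$ (possibly $\infty$). For part (iii), assume $A$ is not properly infinite, so $\Dv_*(A) \ge 1$ by the remark preceding the proposition. If $\Dv_*(A) = 1$, then (i) gives $\Dv_m(A) \le m+1$ for all $m \ge 2$. Conversely, $\Dv_m(A) \le m+1$ implies $\Dv_m(A)/m \le 1 + 1/m$, whence $\Dv_*(A) \le 1$; combined with $\Dv_*(A) \ge 1$ this gives equality. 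The only nontrivial ingredient throughout is the witness construction $y = qx$ in the key step, which essentially says that an $(m,\cdot)$-divisor can always be manufactured from a $(k,\cdot)$-divisor with loss controlled by $\lfloor k/m\rfloor$.
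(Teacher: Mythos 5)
Your proposal is correct and follows essentially the same route as the paper: your key claim $\Dv_m(A) \le \lceil \Dv_k(A)/\lfloor k/m\rfloor\rceil$, witnessed by $y = \lfloor k/m\rfloor x$, is exactly the paper's construction $y = rx$ with $k = rm+d$ and $t = \lceil \ell/r\rceil$, and parts (ii) and (iii) are deduced just as in the paper. The only difference is cosmetic: the paper bounds the ceiling expressions directly via a choice of $\alpha > 1$ and $r_0$, whereas you pass to a limit along a subsequence realizing the $\liminf$, which is a slightly cleaner way to do the same bookkeeping.
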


\noindent
It follows from Proposition~\ref{prop:propinf} and from (iii) above, that $A$ is almost divisible if and only if $\Dv_*(A) \le 1$ (i.e., if and only if $\Dv_*(A) = 0$ or $\Dv_*(A) = 1$). 

\begin{proof}  (i). If $\Dv_*(A) = \infty$ there is nothing to prove. Assume that $1 \le \Dv_*(A) < \infty$. 
Let $m \ge 2$ be given. Let $L$ be the smallest integer strictly greater than $m \, \Dv_*(A)$. We show that $\Dv_m(A) \le L$. Choose $\alpha > 1$ and a positive integer $r_0$ such that
$$\alpha \, \frac{r_0+1}{r_0} \, m \, \Dv_*(A) \le L.$$
By the definition of $\Dv_*(A)$ there is $k \ge r_0 m$ such that $\ell := \Dv_k(A)  \le \alpha k \, \Dv_*(A)$.
Take $x \in \Cu(A)$ such that $kx \le \langle 1_A \rangle \le \ell x$. 
Write $k = rm + d$, with $0 \le d < m$ and $r \ge r_0$. Also, write $\ell = tr-d'$,  with $0 \le d' < r$ and $t \ge 1$. Put $y = rx \in \Cu(A)$. Then $my \le \langle 1_A \rangle \le ty$. With $\lceil \, \cdot \, \rceil$ denoting the ceiling function, we have
\begin{eqnarray*}
\Dv_m(A)  \; \le \; t & = & \lceil \frac{\ell}{r} \rceil \\
&=& \Big\lceil \frac{\ell}{k-d} \, m \Big\rceil \\
& \le & \Big\lceil \alpha \, \frac{k}{k-d} \, m \, \Dv_*(A) \Big\rceil \\
& \le & \Big\lceil \alpha \, \frac{r+1}{r} \, m \, \Dv_*(A) \Big\rceil \\
& \le & \Big\lceil \alpha \, \frac{r_0+1}{r_0} \, m \, \Dv_*(A) \Big\rceil \; \le \; L.
\end{eqnarray*}

(ii). It follows from (i) that 
$$
\frac{\limsup_{m \to \infty} \Dv_m(A)}{m} \; \le \; \Dv_*(A) \; = \; 
\frac{\liminf_{m \to \infty} \Dv_m(A)}{m},
$$
and so the claims follows.

(iii). The ``if" part is trivial, and the ``only if" part follows from (i).
\end{proof}

\noindent We proceed to discuss how $\Dv_*( \, \cdot \,)$ behaves under forming matrix algebras:

\begin{proposition} \label{lm:dv-stable} Let $A$ be a unital \Cs.
\begin{enumerate}
\item $\Dv_*(M_n(A)) \le \Dv_*(A)$ for all integers $n \ge 2$.
\item If $\Cu(A)$ is almost unperforated, then $\Dv_*(M_n(A)) = \Dv_*(A)$ for all integers $n \ge 2$.
\end{enumerate}
\end{proposition}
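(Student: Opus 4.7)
The plan rests on the natural identification $\Cu(M_n(A)) = \Cu(A)$ (from $M_n(A) \otimes \cK \cong A \otimes \cK$), under which $\langle 1_{M_n(A)}\rangle$ corresponds to $n \langle 1_A \rangle$. By Remark~\ref{rem:divisible-unital}, the inequality $\Dv_m(M_n(A)) \le \ell$ then reads: there exists $x \in \Cu(A)$ with $mx \le n \langle 1_A \rangle \le \ell x$. For part (i), I would simply observe that if $y \in \Cu(A)$ witnesses $\Dv_m(A) \le \ell$ (so $my \le \langle 1_A \rangle \le \ell y$), then $x := ny$ witnesses $\Dv_m(M_n(A)) \le \ell$ after scaling both inequalities by $n$. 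Hence $\Dv_m(M_n(A)) \le \Dv_m(A)$ for every $m$, and dividing by $m$ and taking liminf yields (i).

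For part (ii) the heart of the argument is the following rescaling lemma: under almost unperforation, if $mx \le n\langle 1_A\rangle$ in $\Cu(A)$, then $qx \le \langle 1_A\rangle$ for every positive integer $q < m/n$; symmetrically, if $n\langle 1_A\rangle \le \ell x$, then $\langle 1_A\rangle \le \ell' x$ for every positive integer $\ell' > \ell/n$. To prove the first, I would fix a large $s \in \N$ with $q(sn+1) \le sm$ (possible since $sm/(sn+1) \to m/n$ from below as $s \to \infty$), giving
$$(sn+1)(qx) \;\le\; s m x \;\le\; sn\,\langle 1_A\rangle,$$
and then apply almost unperforation in the form $(k+1)u \le kv \Rightarrow u \le v$ with $k = sn$. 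The second statement is symmetric: choosing $s$ with $s\ell \le (sn-1)\ell'$ (available as $s\ell/(sn-1) \to \ell/n$ from above), the chain $sn\,\langle 1_A\rangle \le s\ell x \le (sn-1)\ell' x$ combined with almost unperforation (with $k = sn-1$) finishes the argument.

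With the lemma in hand, I would apply it to a witness $x$ of $\Dv_m(M_n(A)) = \ell$, taking $q := \lceil m/n\rceil - 1$ (the largest integer strictly below $m/n$, so $q \ge m/n - 1$) and $\ell' := \lfloor \ell/n \rfloor + 1$ (the smallest integer strictly above $\ell/n$, so $\ell' \le \ell/n + 1$). This produces $\Dv_q(A) \le \ell'$, whence
$$\frac{\Dv_q(A)}{q} \;\le\; \frac{\ell/n + 1}{m/n - 1} \;=\; \frac{\ell + n}{m - n}.$$
Letting $m$ run over a subsequence realizing $\ell/m = \Dv_m(M_n(A))/m \to \Dv_*(M_n(A))$ (the case $\Dv_*(M_n(A)) = \infty$ is trivial from (i)), the right-hand side tends to $\Dv_*(M_n(A))$, and taking liminf over $q$ yields $\Dv_*(A) \le \Dv_*(M_n(A))$.

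The hard part will be the rescaling lemma: almost unperforation only furnishes a single unit of slack ($(k+1)u \le kv \Rightarrow u \le v$), so one cannot literally divide an inequality $mx \le n\langle 1_A \rangle$ by $n$. The remedy is to amplify by a large integer $s$, which replaces the multiplicative ratio $m/n$ by $sm/(sn+1)$ (resp.\ $s\ell/(sn-1)$), bringing it arbitrarily close to $m/n$ (resp.\ $\ell/n$); the accompanying $O(1)$ corrections in $q$ and $\ell'$ are then immaterial in the asymptotic ratio $\Dv_q(A)/q$.
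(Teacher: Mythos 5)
Your argument is correct, and for part (i) it coincides with the paper's (the explicit witness $x=ny$ is just the unital embedding $A\hookrightarrow M_n(A)$ combined with Remark~\ref{rem:image}). For part (ii), however, your route differs from the paper's in the key technical step. The paper also identifies $\Cu(M_n(A))$ with $\Cu(A)$, takes a witness $x$ with $mx\le n\langle 1_A\rangle\le \ell x$, and divides by roughly $n$ using almost unperforation, but it does so with the fixed moduli $n+1$ and $n-1$: writing $m=r(n+1)+d$ and $\ell=t(n-1)-d'$ it gets $(n+1)rx\le n\langle 1_A\rangle\le (n-1)tx$, hence $rx\le\langle 1_A\rangle\le tx$, which only yields the lossy estimate $\Dv_*(A)\le\frac{n+1}{n-1}\,\Dv_*(M_n(A))$; the loss is then removed by a bootstrap, applying this inequality to $M_{nk}(A)$, invoking part (i) to get $\Dv_*(M_{nk}(A))\le\Dv_*(M_n(A))$, and letting $k\to\infty$ so that $\frac{kn+1}{kn-1}\to 1$. (The paper also uses Proposition~\ref{prop:almdiv}~(i) to control $\ell$ for every $m$, rather than passing to a subsequence realizing the liminf.) Your rescaling lemma replaces all of this: by amplifying by a large integer $s$ before applying almost unperforation (moduli $sn+1$ versus $sn$, and $sn$ versus $sn-1$), you divide the inequalities by $n$ exactly up to an additive $O(1)$ in the indices, so a single pass gives $\Dv_q(A)/q\le(\ell+n)/(m-n)$ and the asymptotic statement follows directly, with no detour through $M_{nk}(A)$ and no need for the $\Dv_m\le m\Dv_*+1$ estimate. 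Both proofs rest on the same identification $\langle 1_{M_n(A)}\rangle = n\langle 1_A\rangle$ and the same form of almost unperforation, but yours localizes the entire argument in one clean lemma at the cost of the amplification trick, while the paper's version distributes the work between a cruder division step and the matrix-bootstrap; your version is arguably the more streamlined of the two, and all the quantitative details (choice of $s\ge q$, respectively $s\ge\ell'$, the bounds $q\ge m/n-1$, $\ell'\le \ell/n+1$, and the passage to a subsequence with $q(m)\to\infty$) check out.
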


\begin{proof} 
(i) follows from Remark~\ref{rem:image} (as $A$ embeds unitally into $M_n(A)$).
 
(ii). Assume that $\Cu(A)$ is almost unperforated. We show first that 
\begin{equation} \label{eq:3}
\Dv_*(A) \le \frac{n+1}{n-1} \, \Dv_*(M_n(A))
\end{equation}
 for all $n \ge 2$. To see this take any integer $m \ge 2$, and use Proposition \ref{prop:almdiv} (i) to see that $\ell := \Dv_m(M_n(A)) \le m \, \Dv_*(M_n(A)) + 1$. Write $m = r (n+1) + d$ and $\ell = t(n-1) - d'$, where $r$ and $t$ are positive integers, $0 \le d < n+1$, and $0 \le d' < n-1$. 

Identify $\Cu(M_n(A))$ with $\Cu(A)$ in the canonical way, where $\langle 1_{M_n(A)} \rangle \in \Cu(M_n(A))$ is identified with $n \langle 1_A \rangle$. Under this identification we can find $x \in \Cu(A)$ such that $mx \le n \langle 1_A \rangle \le \ell x$. In particular,
$$(n+1)rx \le n \langle 1_A \rangle \le (n-1) tx,$$
which by the assumption that $\Cu(A)$ is almost unperforated implies that $rx \le \langle 1_A \rangle \le tx$. This shows that
\begin{eqnarray*}
\frac{\Dv_r(A)}{r} \; \le \; \frac{t}{r} &=& r^{-1}\Big\lceil \frac{\ell}{n-1} \Big\rceil \; \le \; r^{-1} \big( \frac{\ell}{n-1} +1\big) \\
& \le & r^{-1}\Big( \frac{m \, \Dv_*(M_n(A)) +1}{n-1} +1 \Big) \\
& \le &  \frac{n+1}{n-1} \, \Dv_*(M_n(A)) + r^{-1}  \frac{n}{n-1}  \, \Dv_*(M_n(A)) + r^{-1}\frac{n}{n-1}.
\end{eqnarray*}
Now, $r \to \infty$ as $m \to \infty$, and so \eqref{eq:3} follows by letting $m$ tend to infinity.

To complete the proof of (ii), take $n \ge 2$. By (i) and \eqref{eq:3} we have:
$$\Dv_*(A) \; \le \; \frac{kn+1}{kn-1} \, \Dv_*(M_{nk}(A)) \; \le \; \frac{kn+1}{kn-1} \, \Dv_*(M_{n}(A))$$
for all $k \ge 1$, which shows that $\Dv_*(A) \le \Dv_*(M_n(A))$.
\end{proof}

\noindent We have previously remarked that $\Dv_m(A) = \infty$ whenever $m > \rnk(A)$. It follows that $\Dv_*(A) = \infty$ whenever $\rnk(A) < \infty$, i.e., whenever $A$ admits a non-zero finite dimensional representation.

\begin{remark} It can happen that $\Dv_*(M_n(A)) < \Dv_*(A)$. Take for example $A$ such that $M_n(A)$ is properly infinite, but $A$ itself is not properly infinite, cf.\ \cite{Ror:simple}. Then $\Dv_*(M_n(A)) = 0$ and $\Dv_*(A) \ge 1$. 

It is an important open problem if $\Dv_*(A) \le 1$ (i.e., if $A$ is almost divisible) for every
(simple) unital infinite dimensional \Cs{} $A$ for which $\Cu(A)$ is almost unperforated. 
\end{remark}

\section{Finite-, infinite-, and $\omega$-divisibility}

\noindent The property that any of the divisibility numbers $\Dv_m(A)$, $\ddv_m(A)$, and $\dv_m(A)$ is finite, when $A$ is a unital \Cs, has interpretations in terms of structural properties of the \Cs{} $A$. We have already noted that the divisibility numbers always are finite when $A$ is a simple \Cs, and the corresponding structural properties of the \Cs{} are, as we shall see, trivially satisfied for simple \Cs s. The correct definition of ``finite divisibility" in the non-unital case is what we call $(m,\omega)$-divisibility as defined below. 
 
\begin{definition} \label{def:omega-divisibility}
Let $A$ be a \Cs, let $u\in \Cu(A)$, and let $m$ be a positive integer. Then:
\begin{enumerate}
\item 
$u$ is \emph{$(m,\omega)$-divisible} if for all $u' \in \Cu(A)$ with $u' \ll u$ there exists $x \in \Cu(A)$  such that $mx \le u$ and $u' \le n x$ for some positive integer $n$.
\item 
$u$ is \emph{$(m,\omega)$-decomposable} if for all $u' \in \Cu(A)$ with $u' \ll u$ there exist elements $x_1,x_2, \dots, x_m \in \Cu(A)$ such that $x_1+x_2 + \cdots + x_m \le u$ and $u' \le n x_j$  for some positive integer $n$ and for all $j$.
\item
$u$ is \emph{weakly $(m,\omega)$-divisible} if for all $u' \in \Cu(A)$ with $u' \ll u$ there exist elements $x_1,x_2, \dots, x_n$ in  $\Cu(A)$ such that $mx_j \le u$ for all $j$ and $u' \le x_1+x_2+ \cdots + x_n$. 
\end{enumerate}
\end{definition}

\begin{remark} \label{rem:omega-div}
If $u$ in Definition~\ref{def:omega-divisibility} is compact, then $u$ is $(m,\omega)$-divisible, $(m,\omega)$-de\-com\-po\-sab\-le, respectively, weakly $(m,\omega)$-divisible if and only if $\Dv_m(u,A) < \infty$, $\ddv_m(u,A) < \infty$, respectively, $\dv_m(u,A) < \infty$, cf.\ Remark~\ref{rem:divisible-unital}.
\end{remark}

\noindent
In the next result we express $(m,\omega)$-divisibility in terms of structural properties of the \Cs.
Part (iii) is almost contained in \cite{Kir:Abel} (see  \cite[Definition 3.1]{Kir:Abel} and 
\cite[Remark 3.3 (7)]{Kir:Abel} and compare with Definition~\ref{def:cov} and Proposition~\ref{prop:cov}). Recall the definition of the rank of a \Cs{} from Definition~\ref{def:rank}.

\begin{theorem} \label{thm:finite-div}
Let $A$ be a $\sigma$-unital \Cs{} and let $e$ be a strictly positive element of $A$. (If $A$ is unital,  we can take $e$ to be the unit of $A$.) Put $u = \langle e \rangle = \langle A \rangle$. 
\begin{enumerate}
\item
$u$ is $(m,\omega)$-divisible if and only if for every $\ep>0$ there exists a \sh{} $\varphi\colon CM_m(\C)\to A$ such that $(e-\ep)_+$  belongs to the closed two-sided ideal generated by the image of $\varphi$.
\item
 $u$ is $(m,\omega)$-decomposable if and only if for every $\ep>0$ there exist mutually orthogonal positive elements $b_1,b_2,\dots,b_m$ in $A$ such that $(e-\ep)_+$ belongs to the closed two-sided ideal generated by $b_i$  for each $i$. 
\item The following are equivalent:
\begin{enumerate}
\item $u$ is weakly $(m,\omega)$-divisible,
\item $\rnk(A)\geq m$,
\item there exist \sh s $\varphi_i\colon CM_m(\C)\to A$, $i=1,2,\dots, n$, for some $n$, such that $(e-\ep)_+$ belongs to the closed two-sided ideal generated by the union of the images of the $\varphi_i$'s. 
\end{enumerate}
\end{enumerate}
\end{theorem}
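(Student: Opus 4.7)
The plan is to translate each divisibility condition on $u = \langle e\rangle$ into a statement about positive elements and closed ideals in $A$ via Lemmas~\ref{lm:comp} and~\ref{lm:cone-comp}, using $\langle e\rangle = \sup_\ep\langle(e-\ep)_+\rangle$ (with compact containment $\langle(e-\ep)_+\rangle \ll \langle e\rangle$) to convert compact-containment conditions in $\Cu(A)$ into inequalities involving specific $(e-\ep)_+$. A repeatedly used workhorse is the standard ``Cuntz--ideal'' fact: a positive element $a \in A$ lies in the closed two-sided ideal generated by positive elements $b_1,\dots,b_k$ if and only if, for every $\delta > 0$, $(a-\delta)_+ = \sum_\ell d_\ell b_{i_\ell} d_\ell^*$ for finitely many $d_\ell$'s, and in particular $\langle(a-\delta)_+\rangle \le N\sum_i \langle b_i\rangle$ for some integer $N$.

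For (i), the forward implication applies $(m,\omega)$-divisibility to $\langle(e-\ep/2)_+\rangle \ll u$, producing $x \in \Cu(A)$ and $n \in \N$ with $mx \le u$ and $\langle(e-\ep/2)_+\rangle \le nx$. Representing $x = \langle a\rangle$ with $\|a\| \le 1$, Lemma~\ref{lm:cone-comp} (applied with $b = e$, so that $\overline{e(A\otimes \mathcal{K})e} \cong A$) delivers a $*$-homomorphism $\varphi\colon CM_m(\C) \to A$ satisfying $\langle\varphi(e_{11}\otimes\iota)\rangle = \langle(a-\delta)_+\rangle$ for any preassigned $\delta > 0$; combining $\langle(e-\ep/2)_+\rangle \le n\langle a\rangle$ with the Cuntz--ideal fact and a R\o rdam-type perturbation (using $(a-\delta)_+ \to a$ in norm) then places $(e-\ep)_+$ in the closed ideal generated by $\varphi(CM_m(\C))$ for suitably chosen $\delta$. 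Conversely, given such $\varphi$, set $x = \langle\varphi(e_{11}\otimes\iota)\rangle$ so that $mx \le \langle\varphi(1_{M_m}\otimes\iota)\rangle \le u$ automatically; for $u' \ll u$, fix $\ep$ with $u' \le \langle(e-\ep)_+\rangle$, apply the hypothesis at $\ep/2$, and invoke the Cuntz--ideal fact to conclude $\langle(e-\ep)_+\rangle \le Nm\cdot x$. Part~(ii) runs the same argument with $m$ mutually orthogonal witnesses $b_i \in A$ (produced by Lemma~\ref{lm:comp}(ii)), applying the Cuntz--ideal fact to each $b_i$ separately.

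For (iii), I will prove (a)$\Rightarrow$(b)$\Rightarrow$(c)$\Rightarrow$(a). For (a)$\Rightarrow$(b), suppose $\rnk(A) = k < m$ and fix a surjective $\pi\colon A \to M_k$ coming from an irreducible representation; for $\ep$ small enough, $\pi((e-\ep)_+) = (\pi(e)-\ep)_+$ is invertible in $M_k$ and $\Cu(\pi)(\langle(e-\ep)_+\rangle) = k$. Pushing the weak-divisibility witnesses $x_1,\dots,x_n$ through $\Cu(\pi)$: each $m\,\Cu(\pi)(x_j) \le k < m$ forces $\Cu(\pi)(x_j) = 0$, contradicting $\sum_j \Cu(\pi)(x_j) \ge k > 0$. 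The implication (c)$\Rightarrow$(a) is patterned on part~(i): setting $x_i = \langle\varphi_i(e_{11}\otimes\iota)\rangle$ gives $mx_i \le u$ automatically, and the Cuntz--ideal fact bounds $\langle(e-\ep)_+\rangle \le Nm\sum_i x_i$, which may be rewritten as $u' \le \sum_{j=1}^{Nmk} y_j$ with each $y_j$ equal to one of the $x_i$.

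The main obstacle is (b)$\Rightarrow$(c). Let $J \trianglelefteq A$ be the closed two-sided ideal generated by the union of the images of all $*$-homomorphisms $CM_m(\C)\to A$; the aim is $J = A$. If $J \ne A$, then $A/J$ is non-zero and each of its irreducible representations pulls back, via the quotient map, to an irreducible representation of $A$, hence has dimension at least $\rnk(A) \ge m$; Proposition~\ref{prop:Glimm} then produces a non-zero $\overline\psi\colon CM_m(\C)\to A/J$. Projectivity of the cone (a standard fact: $CM_m(\C)$ lifts any $m$-tuple of pairwise-orthogonal, pairwise-equivalent positive contractions from a quotient) lifts $\overline\psi$ to $\psi\colon CM_m(\C)\to A$, yet $\psi(CM_m(\C))\subseteq J$ by the definition of $J$, so $\overline\psi = 0$, a contradiction. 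Hence $J = A$ and $e \in J$. Writing $J$ as the closure of the upward-directed union $\bigcup_F J_F$ of the closed ideals $J_F$ generated by finite collections $F$ of $*$-homomorphisms, a standard perturbation argument (approximate $e$ within $\ep/2$ by an element of some $J_F$ and invoke $\|a-b\| < \ep \Rightarrow (a-\ep)_+ \precsim b$) produces $(e-\ep)_+ \in J_F$ for a suitable finite $F$, which is exactly the content of (c).
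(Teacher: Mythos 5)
Your proposal is correct and follows essentially the same route as the paper's proof: parts (i) and (ii) via Lemmas \ref{lm:comp} and \ref{lm:cone-comp} together with the standard Cuntz--ideal fact, and part (iii) via pushing the weak-divisibility witnesses to a finite-dimensional irreducible quotient for (a)$\Rightarrow$(b), the ideal generated by all cone homomorphisms combined with Glimm's lemma, projectivity of $CM_m(\C)$, and directedness of the finitely generated subideals for (b)$\Rightarrow$(c), and the Cuntz--ideal fact again for (c)$\Rightarrow$(a). The only cosmetic difference is that you unpack the matrix-algebra computation in (a)$\Rightarrow$(b) where the paper simply cites Example~\ref{rem:matrix}.
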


\begin{proof}
(i). Let us assume that $u$ is $(m,\omega)$-divisible. Let $\ep>0$. Find $x\in \Cu(A)$ and a positive integer $n$ such that $mx\leq u$ and $\langle(e-\ep/2)_+\rangle \leq nx$. Choose a positive element $a$ in $A \otimes \cK$ such that $x = \langle a \rangle$, and choose $\eta > 0$ such that $\langle(e-\ep)_+\rangle \leq n \langle (a-\eta)_+ \rangle$.
By Lemma~\ref{lm:cone-comp} there exists $\varphi\colon CM_m(\C)\to A$ such that $\langle \varphi(e_{11}\otimes \iota) \rangle = \langle (a-\eta)_+ \rangle$. Then $\langle (e-\ep)_+ \rangle \le n \langle \varphi(e_{11}\otimes \iota) \rangle$ which implies that $(e-\ep)_+$ belongs to the closed two-sided ideal generated by the image of $\varphi$.

Suppose conversely that for every  $\ep>0$ there exists  $\varphi\colon CM_m\to A$ such that $(e-\ep)_+$ is in the closed two-sided ideal generated by $\varphi(e_{11}\otimes\iota)$.  Set $\langle \varphi(e_{11}\otimes\iota) \rangle=x$. Then $mx\leq u$ by Lemma~\ref{lm:cone-comp}, while $\langle(e-2\ep)_+ \rangle \leq n x$ for some positive integer $n$. This shows that $u$ is $(m,\omega)$-divisible.

(ii). ``Only if". Let $\ep>0$ and suppose that $b_1, b_2, \dots, b_m$ in $A$ exist with the stipulated properties.  Set $\langle b_j \rangle=x_j \in \Cu(A)$. Then 
$$
x_1+x_2 + \cdots + x_m = \langle b_1+b_2 + \cdots + b_m \rangle \le u.
$$
Since $(e-\ep)_+$ belongs to the closed two-sided ideal generated by $b_j$,  $\langle (e-2\ep)_+ \rangle \leq n x_j$ for some integer $n \ge 1$. It follows 
that $u$ is $(m,\omega)$-decomposable.

``If". If $u = \langle e \rangle$ is $(m,\omega)$-decomposable and if $\ep > 0$, then there are positive elements $a_1, a_2, \dots, a_m$ in $A \otimes \cK$ such that $\langle a_1 \rangle+\langle a_2 \rangle+ \cdots +\langle a_m \rangle \le u$ and $\langle (e-\ep/2)_+ \rangle \le n \langle a_j \rangle$ for some positive integer $n$. Choose $\eta > 0$ such that $\langle (e-\ep)_+ \rangle \le n \langle (a_j-\eta)_+ \rangle$ for all $j$. By Lemma~\ref{lm:comp} (ii) there are pairwise orthogonal positive elements $b_1,b_2, \dots, b_m$ in $A$ such that $b_j \sim (a_j - \eta)_+$. Then the closed two-sided ideal generated by $b_j$ contains $(e-\ep)_+$ for each $j$.

(iii).  (a) $\Rightarrow$ (b). Assume that $u$ is weakly $(m,\omega)$-divisible. Suppose that $A$ has an irreducible representation $\pi \colon A \to B(\C^k) = M_k(\C)$ of finite positive dimension $k$.
Then $\pi$ is necessarily surjective. Since $(m,\omega)$-divisibility is preserved by \sh s (cf.\ Remark \ref{rem:image}), we conclude that
$M_k(\C)$ is weakly $(m,\omega)$-divisible. But then $k\geq m$, cf.\ Example~\ref{rem:matrix}. Hence (b) holds. 

 (b) $\Rightarrow$ (c). Assume that (b) holds. Let $(\varphi_i)_{i \in \mathbb{I}}$ be the family of all non-zero \sh s $\varphi_i \colon CM_m(\C) \to A$ and let $I$ be the closed two-sided ideal in $A$ generated by the images of all $\varphi_i$'s. Thus each $\varphi_i$ maps $CM_m(\C)$ into $I$. We claim that $I=A$. Assume, to reach a contradiction, that $I \ne A$. By the assumption that $\rnk(A) \ge m$, all irreducible representations of $A/I$ have dimension at least $m$. It follows from Glimm's lemma (Proposition~\ref{prop:Glimm}) that there is a non-zero \sh{} $CM_m(\C) \to A/I$, which by projectivity lifts to a \sh{} $\varphi \colon CM_m(\C) \to A$. But the image of $\varphi$ is not contained in $I$, which is a contradiction.

For each finite subset $F$ of $\mathbb{I}$ consider the closed two-sided ideal $I_F$ of $A$ generated by $\bigcup_{i \in F} \varphi_i(CM_m(\C))$. Then $A$ is the closure of the union of the upwards directed family of ideals $(I_F)$. Hence, for each $\ep > 0$, there is a finite subset $F$ of $\mathbb{I}$ such that $(e-\ep)_+$ belongs to $I_{F}$. Thus (c) holds.

 (c) $\Rightarrow$ (a). Assume that (c) holds. Set $z_i=\langle \varphi_i(e_{11}\otimes \iota) \rangle$ for $i=1,2,\dots, n$. Then $mz_i \le u$ for all $i$. Moreover, $(a-\ep)_+$ belongs to the algebraic ideal generated by the $n$ elements $\varphi_i(e_{11} \otimes \iota)$, whence $\langle (a-\ep)_+ \rangle \le \sum_{j=1}^n n_jz_j$ for suitable positive integers $n_j$. Put $N = \sum n_j$ and let $x_1, x_2, \dots, x_N$ be a listing of the elements $z_1, \dots, z_n$, with $z_j$ repeated $n_j$ times. Then $mx_j \le u$ and $(a-\ep)_+ \le x_1+x_2 + \dots + x_N$. This shows that $u$ is weakly $(m,\omega)$-divisible.
\end{proof}

\noindent The theorem above can be simplified in the case where $u$ is compact, and in particular in the case where $A$ is unital:

\begin{corollary} \label{cor:finite-div}
Let $A$ be a unital \Cs, and let $m$ be a positive integer. Then:
\begin{enumerate}
\item
$\Dv_m(A) < \infty$ if and only if there exists a \sh{} $\varphi\colon CM_m(\C)\to A$ whose image is full in $A$. 
\item
$\ddv_m(A) < \infty$ if and only if there exist full, pairwise orthogonal positive elements $b_1,b_2,\dots,b_m$ in $A$.
\item The following are equivalent:
\begin{enumerate}
\item $\dv_m(A) < \infty$, 
\item $\rnk(A)\geq m$,
\item there exist \sh s $\varphi_i\colon CM_m(\C)\to A$, $i=1,2,\dots, n$ for some $n$, such that the union of their images is full in $A$.
\end{enumerate}
\end{enumerate}
\end{corollary}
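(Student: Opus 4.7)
The plan is to derive this corollary as a direct specialization of Theorem~\ref{thm:finite-div} to the compact case $u = \langle 1_A \rangle$. Since $A$ is unital, this class is compact, so by Remark~\ref{rem:omega-div} the conditions $\Dv_m(A) < \infty$, $\ddv_m(A) < \infty$, and $\dv_m(A) < \infty$ translate into $(m,\omega)$-divisibility, $(m,\omega)$-decomposability, and weak $(m,\omega)$-divisibility of $\langle 1_A \rangle$, respectively. It then remains to simplify the conclusions of Theorem~\ref{thm:finite-div} in the unital setting.

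The only real content beyond citing Theorem~\ref{thm:finite-div} is the following elementary observation: with $e = 1_A$ and $\ep \in (0,1)$, one has $(1_A - \ep)_+ = (1-\ep)\, 1_A$, which is invertible in $A$. Consequently, for such $\ep$, a subset $S \subseteq A$ generates a closed two-sided ideal containing $(1_A - \ep)_+$ if and only if that ideal equals $A$, i.e., if and only if $S$ is full in $A$. In particular the condition in Theorem~\ref{thm:finite-div} is independent of $\ep$ (once $\ep < 1$), so the clause ``for every $\ep > 0$ there exist $\ldots$ such that $(e-\ep)_+$ belongs to the closed two-sided ideal generated by $\ldots$'' collapses to the simpler statement ``there exist $\ldots$ whose associated ideal equals $A$''.

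Applying this observation piece by piece gives the three items of the corollary. Part (i) follows from Theorem~\ref{thm:finite-div}(i): the existence, for each $\ep > 0$, of $\varphi\colon CM_m(\C) \to A$ with $(1_A-\ep)_+$ in the ideal generated by the image of $\varphi$ is equivalent to the existence of a single $\varphi$ whose image is full in $A$. Part (ii) follows in exactly the same way from Theorem~\ref{thm:finite-div}(ii), where ``$(1_A - \ep)_+$ belongs to the ideal generated by $b_i$'' becomes ``$b_i$ is full''. Finally, in part (iii) the equivalence of (a) and (b) is inherited verbatim from Theorem~\ref{thm:finite-div}(iii), since condition (b) does not involve $\ep$, and the equivalence of (a) and (c) follows from Theorem~\ref{thm:finite-div}(iii)(c) after the same $\ep$-collapse.

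No serious obstacle is anticipated; the proof is essentially a translation step, and the only point requiring care is recording that, in the unital setting, the role of the element $(e-\ep)_+$ in the ideal-theoretic conditions of Theorem~\ref{thm:finite-div} is completely absorbed by the notion of fullness.
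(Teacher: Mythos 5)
Your proposal is correct and is exactly the paper's (implicit) argument: the corollary is stated there as the simplification of Theorem~\ref{thm:finite-div} when $u=\langle 1_A\rangle$ is compact, using Remark~\ref{rem:omega-div} to translate finiteness of the divisibility numbers into the $(m,\omega)$-properties. Your observation that $(1_A-\ep)_+=(1-\ep)1_A$ is invertible for $\ep<1$, so the $\ep$-quantified ideal conditions collapse to fullness, is precisely the point that makes the translation work.
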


\noindent Propositions \ref{prop:Glimm} (Glimm), Proposition \ref{prop:a}, and Corollary~\ref{cor:finite-div} (i) immediately imply:

\begin{corollary} \label{cor:simple}
 If $A$ is a unital, infinite dimensional, simple \Cs, then the three divisibility numbers $\Dv_m(A)$, $\ddv_m(A)$, and $\dv_m(A)$ are finite  for every integer $m \ge 1$.
\end{corollary}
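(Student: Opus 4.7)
The plan is to chain together the three cited results in the order suggested by the paper. First I would observe that because $A$ is simple and infinite dimensional, it admits no non-zero finite dimensional representation: any such irreducible representation would be surjective onto a matrix algebra, but its kernel is a closed two-sided ideal, which by simplicity must be zero, forcing $A$ itself to be finite dimensional, contradicting the hypothesis. Therefore $\rnk(A) = \infty$; in particular $\rnk(A) \ge m$ for every $m \ge 1$.

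Next I would invoke Glimm's lemma (Proposition \ref{prop:Glimm}) to produce, for each $m$, a non-zero \sh{} $\varphi \colon CM_m(\C) \to A$. Since $A$ is simple and unital, the closed two-sided ideal generated by the non-zero image $\varphi(CM_m(\C))$ must be all of $A$; equivalently, this image is full in $A$.

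Applying Corollary \ref{cor:finite-div} (i) to this full \sh{} yields $\Dv_m(A) < \infty$. The remaining two conclusions are then immediate from the inequalities
\[
\dv_m(A) \le \Dv_m(A), \qquad \ddv_m(A) \le \Dv_m(A),
\]
established in Proposition \ref{prop:a}. No step here is an obstacle; the content of the corollary is that the general structural characterizations of Corollary \ref{cor:finite-div} specialize trivially to the simple unital case, since simplicity automatically upgrades ``non-zero image'' to ``full image.''
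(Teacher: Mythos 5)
Your argument is correct and is exactly the route the paper intends: the corollary is stated as an immediate consequence of Glimm's lemma (Proposition~\ref{prop:Glimm}), Corollary~\ref{cor:finite-div}~(i), and the inequalities of Proposition~\ref{prop:a}, and you have simply filled in the (correct) details, including the observation that simplicity upgrades a non-zero image of $CM_m(\C)$ to a full one. Nothing further is needed.
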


\noindent Let us also note what it means to have infinite $\dv_m( \, \cdot \,)$ numbers:

\begin{corollary} \label{cor:char}
 Let $A$ be a unital \Cs. 
\begin{enumerate}
\item $A$ admits a character if and only if $\dv_2(A) = \infty$. 
\item $A$ admits a finite dimensional representations if and only if $\dv_m(A) = \infty$ for some integer $m \ge 2$. 
\end{enumerate}
\end{corollary}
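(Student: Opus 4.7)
The plan is to derive both statements directly from Corollary~\ref{cor:finite-div}~(iii), which asserts the equivalence $\dv_m(A) < \infty \iff \rnk(A) \ge m$ for a unital \Cs{} $A$. Thus $\dv_m(A) = \infty$ if and only if $\rnk(A) < m$. Note that since $A$ is unital, it has at least one irreducible representation, so $\rnk(A) \ge 1$.

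For part (i), I would argue that $\dv_2(A) = \infty$ is equivalent to $\rnk(A) < 2$, which combined with $\rnk(A) \ge 1$ forces $\rnk(A) = 1$. By Definition~\ref{def:rank}, $\rnk(A) = 1$ means that $A$ admits a one-dimensional irreducible representation, i.e., a character.

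For part (ii), I would observe that the existence of \emph{some} integer $m \ge 2$ with $\dv_m(A) = \infty$ translates, via the same equivalence, to $\rnk(A) < m$ for some $m \ge 2$, i.e., $\rnk(A) < \infty$. By definition, this says precisely that $A$ admits a non-zero finite-dimensional irreducible representation; and of course any non-zero finite-dimensional representation contains an irreducible sub-representation, so this is equivalent to the existence of any non-zero finite-dimensional representation of $A$.

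There is no substantive obstacle here: once Corollary~\ref{cor:finite-div} is in place, both parts are immediate translations between the divisibility language and the rank language, with the only subtlety being the bookkeeping that $\rnk(A) \ge 1$ whenever $A$ is unital (and nonzero), so that in part (i) the condition $\rnk(A) < 2$ collapses to $\rnk(A) = 1$.
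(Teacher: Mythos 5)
Your proof is correct and is exactly the argument the paper intends: the corollary is stated as an immediate consequence of Corollary~\ref{cor:finite-div}~(iii) together with the definition of $\rnk(A)$ (and the observation that $\rnk(A)=1$ means $A$ has a character), and your translation between the divisibility and rank conditions, including the remark that a nonzero finite-dimensional representation yields a finite-dimensional irreducible one, is the whole content.
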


\begin{remark}[The Global Glimm Halving Problem]
Glimm's lemma (Proposition~\ref{prop:Glimm}) says that there exists a non-zero \sh{} from $CM_n(\C)$ into a \Cs{} $A$ if and only if $A$ admits an irreducible representation of dimension at least $n$. It is not known how ``large" one can make the image of such a \sh. 
In particular, it is not known for which \Cs s $A$ one can find a \sh{} $CM_n(\C) \to A$ whose image is full in $A$ (i.e., the image is not contained in any proper closed two-sided ideal in $A$). For $n=2$ this problem is known as the ``Global Glimm Halving Problem" (see \cite{BlanKir:Glimm}, \cite{BlanKir:pi3} and \cite{KirRor:pi2}). A unital \Cs{} $A$ is said to have the \emph{Global Glimm Halving Property} if there is a \sh{} $CM_2(\C) \to A$ with full image.

More specifically, one can ask if any (unital) \Cs, which admits no finite dimensional representation, satisfies the Global Glimm Halving Property. 
In view of Corollary~\ref{cor:finite-div}, this problem for unital \Cs s $A$ may be restated as follows: Does $\Dv_2(A) = \infty$ imply that $\dv_m(A) = \infty$ for some positive integer $m$? For a non-unital \Cs{} $A$, the one can restate the problem in the following way: Does $\Dv_2(A) = \infty$ imply that $\langle A \rangle$ fails to be $(m,\omega)$-divisible for some positive integer $m$. 

It is shown in \cite{KirRor:pi2} that if $A$ is a weakly purely infinite \Cs{}, then $A$ is purely infinite if and only if all hereditary sub-\Cs s of $A$ have the Global Glimm Halving Property. (It is easy to see that the rank of any weakly purely infinite \Cs{} is infinite.) It is an open problem if all weakly purely infinite \Cs s are purely infinite. 
\end{remark}

\begin{remark} Let $A$ be a unital \Cs.
It follows from Proposition \ref{prop:a} (and also from Corollary~\ref{cor:finite-div}) that
$$
\Dv_m(A) < \infty \implies \ddv_m(A) < \infty \implies \dv_m(A) < \infty
$$
for all positive integers $m$. None of the two reverse implications hold in general. 

For each integer $m \ge 2$ consider the Bott projection $p$ in $C(S^{2m}) \otimes \cK$. This projection has (complex) dimension $m$ and it has no non-trivial sub-projections. The unital \Cs{} $A = p(C(S^{2m})\otimes \cK)p$ is a homogeneous \Cs{} of rank $m$. Hence $\dv_m(A) < \infty$. Suppose that $\ddv_m(A) < \infty$. Then, by Corollary~\ref{cor:finite-div}~(ii), there would exist full, pairwise orthogonal, positive elements $b_1, \dots, b_m$ in $A$. This would entail that each $b_j$ is one-dimensional in each fiber of $A$, and hence that $f_j.b_j$ is a one-dimensional projection for some $f_j \in C(S^{2m})$. But this contradicts the fact that $p$ has no proper subprojections. 

To see that $\ddv_m(A) < \infty$ does not imply $\Dv_m(A) < \infty$,  consider the \Cs{} $B = C(S^2) \otimes \cK$, and take the (one-dimensional) Bott projection $p \in B$ and a trivial $(m-1)$-dimensional projection $q \in B$ such that $p$ and $q$ are orthogonal. Put $A = (p+q)B(p+q)$. It follows from a $K$-theoretical argument that $p+q$ cannot be written as the sum of $m$ pairwise orthogonal and equivalent projections (because $[p+q]$ is not divisible by $m$ in $K_0(A)$). The unit of $A$ can be written as the sum of $m$ (necessarily full) projections, so $\ddv_m(A) < \infty$. Assume that $\Dv_m(A) < \infty$. Then, by Corollary~\ref{cor:finite-div}~(i), there is a \sh{} $\varphi \colon CM_m(\C) \to A$ whose image is full in $A$.  As explained in Remark~\ref{rem:CM}, this entails that there exist full, pairwise orthogonal, pairwise equivalent, positive elements $b_1,b_2, \dots, b_m$ in $A$. Arguing as in the paragraph above, we can assume that each $b_j$ is in fact a projection. But that contradicts the fact that $1_A = p+q$ is not the sum of $m$ pairwise equivalent projections. 
\end{remark}

\section{Divisibility and comparability} \label{sec:divcomp}

Let $A$ and $B$ be \Cs s. Then there is a natural bi-additive map
$$\Cu(A) \times \Cu(B) \to \Cu(A \otimes B), \qquad (x,y) \mapsto x \otimes y,$$
defined as follows: If $x = \langle a \rangle$ and $y = \langle b \rangle$ with $a$ a positive element in $A \otimes \cK$ and $b$ a positive element in $B \otimes \cK$, then $x \otimes y = \langle a \otimes b \rangle$, where we identify $(A \otimes \cK) \otimes (B \otimes \cK)$ with $A \otimes B \otimes \cK$. Note that $x_1 \otimes y_1 \le x_2 \otimes y_2$ if $x_1 \le x_2$ and $y_1 \le y_2$.

Part (i) of the following result was (implicitly) proved in \cite[Lemma 4.3]{Ror:Z}, and was used to prove that $\Cu(A \otimes \cZ)$ is almost unperforated for all unital \Cs s $A$. 

\begin{lemma}  \label{lm:Z}
Let $A$ and $B$ be unital \Cs s and let $1 \le m < n$ be integers.
\begin{enumerate}
\item  Let $x,y \in \Cu(A)$ be such that $nx \le my$. If $B$ is $(m,n)$-divisible, then $x \otimes \langle 1_B \rangle \le y \otimes \langle 1_B \rangle$. 
\item Let $x_1,x_2, \dots, x_m,y \in \Cu(A)$ be such that $nx_j \le y$ for all $j$. If $B$ is $(m,n)$-de\-com\-po\-sa\-ble, then 
$$ (x_1  + x_2 + \cdots + x_m) \otimes \langle 1_B \rangle \le y \otimes \langle 1_B \rangle.$$
\item Let $x,y_1,y_2, \dots, y_n \in \Cu(A)$ be such that $x \le m y_j$ for all $j$. If $B$ is weakly $(m,n)$-di\-vi\-si\-ble, then
$$x \otimes \langle 1_B \rangle \le (y_1  + y_2 + \cdots + y_n) \otimes \langle 1_B \rangle.$$
\end{enumerate}
\end{lemma}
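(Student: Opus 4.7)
The plan is that all three parts follow by essentially the same trick: use the unitality of $B$ (so that $\langle 1_B\rangle$ is compact and we may use the simplified form of divisibility from Remark \ref{rem:divisible-unital}), pick the Cuntz-semigroup witnesses from $\Cu(B)$, tensor them against the given elements of $\Cu(A)$, and shuffle integer coefficients between the two tensor factors using bi-additivity together with the fact that $(x_1 + x_2)\otimes y = x_1\otimes y + x_2\otimes y$ and $k(x\otimes y) = (kx)\otimes y = x\otimes (ky)$.

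For part (i), choose $z \in \Cu(B)$ with $mz \le \langle 1_B\rangle \le nz$. Then the chain
\[
x\otimes \langle 1_B\rangle \;\le\; x\otimes (nz) \;=\; (nx)\otimes z \;\le\; (my)\otimes z \;=\; y\otimes (mz) \;\le\; y\otimes\langle 1_B\rangle
\]
delivers the conclusion.

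For part (ii), pick $z_1,\dots,z_m \in \Cu(B)$ with $z_1+\cdots+z_m \le \langle 1_B\rangle$ and $\langle 1_B\rangle \le nz_j$ for each $j$. For each $j$ apply the argument of (i) coordinatewise:
\[
x_j\otimes\langle 1_B\rangle \;\le\; x_j\otimes (nz_j) \;=\; (nx_j)\otimes z_j \;\le\; y\otimes z_j,
\]
and then sum over $j$ and use $\sum_j z_j \le \langle 1_B\rangle$ on the right.

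For part (iii), pick $z_1,\dots,z_n \in \Cu(B)$ with $mz_j \le \langle 1_B\rangle$ for all $j$ and $\langle 1_B\rangle \le z_1+\cdots+z_n$. Then
\[
x\otimes \langle 1_B\rangle \;\le\; x\otimes\sum_{j=1}^n z_j \;=\; \sum_{j=1}^n x\otimes z_j \;\le\; \sum_{j=1}^n (my_j)\otimes z_j \;=\; \sum_{j=1}^n y_j\otimes (mz_j) \;\le\; \Bigl(\sum_{j=1}^n y_j\Bigr)\otimes \langle 1_B\rangle.
\]

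There is no real obstacle; the only thing to check is that the bi-additive tensor map $\Cu(A)\times\Cu(B)\to\Cu(A\otimes B)$ introduced just before the lemma genuinely is order-preserving in each variable and compatible with positive integer multiples, both of which are immediate from its definition on representatives.
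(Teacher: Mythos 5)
Your proof is correct and follows essentially the same argument as the paper: choose the divisibility witnesses in $\Cu(B)$ (using compactness of $\langle 1_B\rangle$ to get the simplified form), tensor, and move the integer coefficients across the tensor product exactly as in the paper's chains of inequalities. The only cosmetic difference is that in parts (ii) and (iii) you phrase the estimate coordinatewise before summing, which is the same computation.
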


\begin{proof} 
(i). Take $z \in \Cu(B)$ such that $mz \le \langle 1_B \rangle \le nz$. Then
$$x \otimes \langle 1_B \rangle \le x \otimes nz = nx \otimes z \le my \otimes z = y \otimes mz \le
y \otimes \langle 1_B \rangle.$$

(ii). Take $z_1, z_2, \dots, z_m \in \Cu(B)$ such that $z_1+z_2+ \cdots + z_m \le \langle 1_B \rangle \le nz_j$. Then
\begin{eqnarray*}
(x_1  + x_2 + \cdots + x_m) \otimes \langle 1_B \rangle &\le& 
x_1 \otimes nz_1 + x_2 \otimes nz_2 + \cdots + x_m \otimes nz_m \\
&=& nx_1 \otimes z_1 + nx_2 \otimes z_2 + \cdots + nx_m \otimes z_m \\
& \le & y \otimes (z_1 + z_2 + \cdots + z_m) \\
& \le & y \otimes  \langle 1_B \rangle.
\end{eqnarray*}

(iii). Take $z_1, z_2, \dots, z_n \in \Cu(B)$ such that $mz_j \le \langle 1_B \rangle \le z_1+z_2 + \cdots + z_n$. Then
\begin{eqnarray*}
(y_1  + y_2 + \cdots + y_n) \otimes \langle 1_B \rangle &\ge& 
y_1 \otimes mz_1 + y_2 \otimes mz_2 + \cdots + y_n \otimes mz_n \\
&=& my_1 \otimes z_1 + my_2 \otimes z_2 + \cdots + my_n \otimes z_n \\
& \ge & x \otimes (z_1 + z_2 + \cdots + z_n) \\
& \ge & x \otimes  \langle 1_B \rangle.
\qedhere
\end{eqnarray*}
\end{proof}

\noindent The lemma above can loosely be paraphrased as follows: Good divisibility properties of $B$ ensure good comparability properties of $A \otimes B$, and bad comparability properties of $A \otimes B$ entail bad divisibility properties of $B$. 

We proceed to show that infinite tensor products of (suitable) unital \Cs s cannot have very bad comparability properties. 


\begin{lemma}  \label{lm:limit1}
Let $\big(A_k\big)_{k=1}^\infty$ be a sequence of unital \Cs s such that $N:=\sup_k \dv_2(A_k) < \infty$. Then
$$\dv_m( \bigotimes_{k=1}^\infty A_k) \le N^n,$$
for all integers $m \ge 2$, where $n$ is any integer satisfying $2^n \ge m$. 
\end{lemma}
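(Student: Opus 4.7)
The plan is to witness weak $(m, N^n)$-divisibility of $B = \bigotimes_{k=1}^\infty A_k$ by producing $N^n$ explicit elements of $\Cu(B)$, obtained by tensoring together divisibility witnesses from the first $n$ factors and padding with the unit on the remaining factors. Since $\langle 1_B \rangle$ is compact, by Remark~\ref{rem:divisible-unital} it suffices to find $x_1, \ldots, x_{N^n} \in \Cu(B)$ with $m x_i \leq \langle 1_B \rangle$ and $\langle 1_B \rangle \leq \sum_i x_i$.

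For each $k = 1, \ldots, n$, I would use $\dv_2(A_k) \leq N$ to pick elements $x_1^{(k)}, \ldots, x_N^{(k)} \in \Cu(A_k)$ such that $2 x_j^{(k)} \leq \langle 1_{A_k} \rangle$ for all $j$ and $\langle 1_{A_k} \rangle \leq x_1^{(k)} + \cdots + x_N^{(k)}$. Writing $B \cong A_1 \otimes \cdots \otimes A_n \otimes C$ with $C = \bigotimes_{k > n} A_k$, and using the natural bi-additive tensor map of Section~\ref{sec:divcomp}, define for each multi-index $\mathbf{j} = (j_1, \ldots, j_n) \in \{1, \ldots, N\}^n$ the element
\[
y_{\mathbf{j}} \;=\; x_{j_1}^{(1)} \otimes x_{j_2}^{(2)} \otimes \cdots \otimes x_{j_n}^{(n)} \otimes \langle 1_C \rangle \;\in\; \Cu(B).
\]
There are $N^n$ such elements, and these will be my candidates.

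The two required inequalities follow from straightforward manipulations with bi-additivity of the tensor product in the Cuntz semigroup. First, iterating $2 x_{j_k}^{(k)} \leq \langle 1_{A_k} \rangle$ in each slot gives
\[
2^n \, y_{\mathbf{j}} \;=\; (2 x_{j_1}^{(1)}) \otimes \cdots \otimes (2 x_{j_n}^{(n)}) \otimes \langle 1_C \rangle \;\leq\; \langle 1_{A_1} \rangle \otimes \cdots \otimes \langle 1_{A_n} \rangle \otimes \langle 1_C \rangle \;=\; \langle 1_B \rangle,
\]
and since $m \leq 2^n$, also $m y_{\mathbf{j}} \leq \langle 1_B \rangle$. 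Second, expanding the $n$-fold product $(\sum_{j_1} x_{j_1}^{(1)}) \otimes \cdots \otimes (\sum_{j_n} x_{j_n}^{(n)}) \otimes \langle 1_C \rangle$ by bi-additivity gives exactly $\sum_{\mathbf{j}} y_{\mathbf{j}}$, and this dominates $\langle 1_B \rangle$ by another application of the unit inequalities in each factor.

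There is no real obstacle: the only subtlety is justifying that $\langle 1_B \rangle$ equals the tensor product of the classes of the units across the decomposition $B \cong A_1 \otimes \cdots \otimes A_n \otimes C$, which is immediate from the definition of the bi-additive pairing, and that bi-additivity lets one freely move scalar factors of $2$ from slot to slot inside the tensor. Everything else is routine arithmetic in $\Cu(B)$.
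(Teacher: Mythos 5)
Your proposal is correct and follows essentially the same route as the paper: both tensor together weak-$(2,N)$-divisibility witnesses from the first $n$ factors and use bi-additivity of the Cuntz-semigroup tensor pairing to get $2^n y_{\mathbf{j}} \le \langle 1\rangle \le \sum_{\mathbf{j}} y_{\mathbf{j}}$. The only cosmetic difference is that the paper carries out this computation in $\Cu\bigl(\bigotimes_{k=1}^n A_k\bigr)$ and passes to the infinite tensor product via the unital embedding (Remark~\ref{rem:image}), whereas you work directly in $\Cu(B)$ by padding each witness with $\langle 1_C\rangle$ -- these amount to the same thing.
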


\begin{proof} Take $n$ such that $2^n \ge m$. For each $k$, find $z^{(k)}_i \in \Cu(A_k)$, $i=1,2,\dots,N$, such that $2z^{(k)}_i \le \langle 1_{A_k} \rangle \le \sum_{i=1}^N z^{(k)}_i$. 
Given a multi-index $(i_1,i_2,\dots,i_n)\in \{1,\dots,N\}^n$, put 
$$
z_{i_1,i_2,\dots,i_n} = z^{(1)}_{i_1} \otimes z^{(2)}_{i_2} \otimes \cdots \otimes z^{(n)}_{i_n} \in \Cu\Big(\bigotimes_{k=1}^n A_k\Big).
$$
Then 
\begin{align*}
m\cdot z_{i_1,i_2,\dots,i_n}   \le   2^n z_{i_1,i_2,\dots,i_n} & =  (2z^{(1)}_{i_i}) \otimes (2z^{(2)}_{i_2}) \otimes \cdots \otimes (2z^{(n)}_{i_n}) \\
& \le \langle 1_{A_1} \otimes 1_{A_2} \otimes \cdots \otimes 1_{A_n} \rangle \\ 
&\le   \Big(\sum_{i=1}^N z^{(1)}_i\Big) \otimes \Big(\sum_{i=1}^N z^{(2)}_i\Big) \otimes \cdots \otimes 
\Big(\sum_{i=1}^N z^{(n)}_i\Big) \\
& = \sum_{i_1,i_2,\dots,i_n=1}^N z_{i_1,i_2,\dots,i_n}.\qedhere
\end{align*}
\end{proof}

\noindent
Recall that a \Cs{} $A$ has the Corona Factorization Property if and only if all full projections in $\cM(A\otimes \cK)$ are properly infinite. 
It was shown in \cite{OPR:Cuntz} that if $A$ is a separable \Cs, then $A$ and all its closed two-sided ideals have the Corona Factorization Property if and only if for every integer $m \ge 2$, and for all $x',x,y_1,y_2, \dots$ in $\Cu(A)$ such that $x' \ll x$ and $x \le my_j$ for all $j$,  one has $x' \le y_1+y_2 + \cdots + y_N$ for some integer $N \ge 1$. 

\begin{proposition} Let $\big(A_k\big)_{k=1}^\infty$ be a sequence of unital \Cs s such that $$\sup_k \dv_2(A_k) < \infty.$$ It follows that the \Cs{} $\bigotimes_{k=1}^\infty A_k$ and all its closed two-sided ideals have the Corona Factorization Property.
In particular, if $A$ is  a unital $C^*$-algebra without characters then $\bigotimes_{k=1}^\infty A$ and all its closed two-sided ideals have the Corona Factorization Property.
\end{proposition}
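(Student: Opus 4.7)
The plan is to verify, for $B := \bigotimes_{k=1}^\infty A_k$, the Cuntz-semigroup criterion characterizing the Corona Factorization Property for a (separable) \Cs{} together with all its closed two-sided ideals, as quoted from \cite{OPR:Cuntz} in the paragraph preceding the proposition. The ``in particular'' clause follows from the first assertion by taking $A_k = A$ for all $k$ and invoking Corollary \ref{cor:char}(i): a unital \Cs{} $A$ lacks a character exactly when $\dv_2(A) < \infty$.

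Set $M := \sup_k \dv_2(A_k)$ and, for each integer $m \ge 2$, $N := M^{\lceil \log_2 m \rceil}$. Applying Lemma \ref{lm:limit1} to any tail sequence $(A_k)_{k > n}$, the tail \Cs{} $B_{>n} := \bigotimes_{k > n} A_k$ is weakly $(m,N)$-divisible, uniformly in $n$. Now let $x' \ll x$ in $\Cu(B)$ and $y_1, y_2, \dots \in \Cu(B)$ satisfy $x \le m y_j$ for all $j$; the goal is to show $x' \le y_1 + \cdots + y_N$. The strategy is to transport the data to $\Cu(B_n)$ for some finite $n$ (via $\Cu(B) = \varinjlim \Cu(B_n)$ from Proposition \ref{prop:Culimits}, where $B_n := \bigotimes_{k=1}^n A_k$) and then apply Lemma \ref{lm:Z}(iii) with $A = B_n$, $B = B_{>n}$, exploiting the factorization $B_n \otimes B_{>n} = B$ so that for $\xi \in \Cu(B_n)$ the element $\xi \otimes \langle 1_{B_{>n}} \rangle \in \Cu(B)$ coincides with the image of $\xi$ under the standard inclusion $\iota_n : B_n \to B$.

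Using Axiom (A2), interpolate $x' \ll x'_1 \ll x'_2 \ll x$. Since $x'_2 \ll x \le m y_j = \sup_i m y_{j,i}$ along any increasing sequence with supremum $y_j$, one extracts $y'_j \ll y_j$ with $x'_2 \le m y'_j$ for each $j = 1, \dots, N$. By Proposition \ref{prop:Culimits}(i) applied to $x'_2$ and each $y_j$, and by passing to a common large index $n$, one obtains $\tilde{x}, \tilde{y}_1, \dots, \tilde{y}_N$ in $\Cu(B_n)$ with
\[
x'_1 \le \Cu(\iota_n)(\tilde{x}) \le x'_2, \qquad y'_j \le \Cu(\iota_n)(\tilde{y}_j) \le y_j.
\]
Combining these with $x'_2 \le m y'_j$ gives $\Cu(\iota_n)(\tilde{x}) \le m \Cu(\iota_n)(\tilde{y}_j)$ in $\Cu(B)$ for every $j$.

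The closing step is a descent to $\Cu(B_{n^*})$ for some $n^* \ge n$. By Axiom (A2) in $\Cu(B_n)$, pick $\tilde{x}' \ll \tilde{x}$ with $\Cu(\iota_n)(\tilde{x}') \ge x'$ (which exists since $x' \ll \Cu(\iota_n)(\tilde{x})$). Proposition \ref{prop:Culimits}(ii) then provides, for each $j$, an index $n_j \ge n$ with $\Cu(\iota_{n, n_j})(\tilde{x}') \le m \Cu(\iota_{n, n_j})(\tilde{y}_j)$ in $\Cu(B_{n_j})$. Setting $n^* := \max_j n_j$ and writing $\tilde{u}, \tilde{w}_j$ for the respective images in $\Cu(B_{n^*})$, one has $\tilde{u} \le m \tilde{w}_j$ for all $j = 1, \dots, N$. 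Since $B_{>n^*}$ is weakly $(m, N)$-divisible, Lemma \ref{lm:Z}(iii), applied with $A = B_{n^*}$ and $B = B_{>n^*}$, yields
\[
\tilde{u} \otimes \langle 1_{B_{>n^*}} \rangle \le \bigl(\tilde{w}_1 + \cdots + \tilde{w}_N\bigr) \otimes \langle 1_{B_{>n^*}} \rangle
\]
in $\Cu(B)$, which unpacks to $x' \le \Cu(\iota_n)(\tilde{x}') \le \sum_{j=1}^N \Cu(\iota_n)(\tilde{y}_j) \le \sum_{j=1}^N y_j$, as required. The main obstacle is the bookkeeping needed to simultaneously approximate $x$ and the finitely many relevant $y_j$ from a common $\Cu(B_n)$ and to descend the inequality $\Cu(\iota_n)(\tilde{x}) \le m \Cu(\iota_n)(\tilde{y}_j)$ down to $\Cu(B_{n^*})$; once this is arranged, Lemma \ref{lm:Z}(iii) does the essential work.
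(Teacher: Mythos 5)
Your proposal is correct and follows essentially the same route as the paper: bound $\dv_m$ of the tail algebras uniformly via Lemma~\ref{lm:limit1}, verify the Cuntz-semigroup characterization of the Corona Factorization Property from \cite{OPR:Cuntz} by pushing the data $x' \ll x \le m y_j$ down to a finite tensor stage with Proposition~\ref{prop:Culimits}, and conclude with Lemma~\ref{lm:Z}(iii) applied to the factorization $B = B_{n^*} \otimes B_{>n^*}$. Your write-up merely makes explicit the bookkeeping that the paper compresses into ``repeated use of Proposition~\ref{prop:Culimits}(i) and (ii)'', and the ``in particular'' clause via Corollary~\ref{cor:char}(i) matches the paper's intent.
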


\begin{proof} 
Put $B =\bigotimes_{k=1}^\infty A_k$, and for each $n \ge 1$ put $B_n = \bigotimes_{k=1}^n A_k$ and  $D_n = \bigotimes_{k=n+1}^\infty A_k$. We shall view $(B_n)_{n=1}^\infty$ as an increasing sequence of sub-\Cs s of $B$ such that $\bigcup_{n=1}^\infty B_n$ is dense in $B$, and we shall identify $B$ with $B_n \otimes D_n$ for all $n$.

Let $m \ge 1$ be an integer and let $x',x,y_1,y_2,y_3, \dots$ in $\Cu(B)$ be such that $x' \ll x$ and $x \le my_j$ for all $j$. By Lemma~\ref{lm:limit1} there is a positive integer $N$ such that $\dv_m(D_n) \le N$ for all $n$. We show that $x' \le y_1+y_2 + \cdots + y_N$. This will prove that $B$ has the Corona Factorization Property. 

Repeated use of 
Proposition \ref{prop:Culimits} (i) and (ii) shows that there exists a positive integer $n$, and elements $x'', y_1',y_2', \dots, y_N'$ in $\Cu(B_n)$ such that 
$$x' \le x'' \otimes \langle 1_{D_n} \rangle \le x, \quad y_j' \otimes \langle 1_{D_n} \rangle \ll y_j \; \; \text{in} \;  \Cu(B); \qquad x'' \ll my_j'  \; \; \text{in} \; \Cu(B_n), \qquad  $$
where $x \mapsto x \otimes \langle 1_{D_n} \rangle$ denotes the canonical embedding $\Cu(B_n) \to \Cu(B)$. We can now apply Lemma~\ref{lm:Z} (iii) to deduce that 
$$x' \le x'' \otimes \langle 1_{D_n} \rangle \le (y_1' + y_2' + \cdots + y_N') \otimes \langle 1_{D_n} \rangle 
\le y_1+y_2+ \cdots + y_N$$
as desired.
\end{proof}

\section{Obstructions to Divisibility} \label{sec:obstruct}

\noindent
A trivial obstruction to (weak) divisibility of a \Cs{} is its rank: $\dv_m(A)<\infty$ if and only if  $m\leq \rnk(A)$ (by Corollary \ref{cor:finite-div} (iii)). In this section 
we shall discuss ways of obtaining homogeneous \Cs{}s with large rank and large weak divisibility constant. We use these techniques to construct unital simple \Cs s with large weak divisibility constants. 

We remark first that Lemma~\ref{lm:Z} provides non-trivial obstructions to divisibility in  $B$. Indeed, it follows by that lemma that if there exists a unital \Cs{} $A$ and $x,y \in \Cu(A)$ such that $nx \le my$ but $x \otimes \langle 1_B \rangle \nleq y \otimes \langle 1_B \rangle$, then $\Dv_m(B) > n$. Similarly, if there exist $x_1, \dots, x_m,y$ in $\Cu(A)$ such that $nx_j \le y$ for all $j$ while 
$$
(x_1 +x_2 + \cdots + x_m) \otimes \langle 1_B \rangle \nleq y \otimes \langle 1_B \rangle,
$$
then $\ddv_m(B) > n$. Finally, if there exist $x,y_1,\dots, y_n \in \Cu(A)$ such that $x \le my_j$ for all $j$ while 
$$
x \otimes \langle 1_B \rangle \nleq (y_1+y_2 + \cdots + y_n) \otimes \langle 1_B \rangle,
$$
then $\dv_m(B) > n$. 

We introduce below another way to obtain bad divisibility behavior:

\begin{lemma} \label{lm:2-div}
Let $u,v\in \Cu(A)$ be compact elements. If $\Dv_2(u+v,A)\leq N$ then there exist $x_1,x_2,\dots,x_{N}$ in $\Cu(A)$ such that $2x_i\leq v$ for all $i$ and 
\begin{align*}
2v\leq v+(2N+1)u+\sum_{i=1}^{N} 2x_i.
\end{align*}
\end{lemma}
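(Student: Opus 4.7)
The plan is to extract the witnessing $x_i$ from the divisibility witness of $u+v$ by combining the Riesz-type decomposition (P1) with the Cuntz-semigroup identities (P2). Since $u+v$ is compact (as a sum of two compacts), Remark~\ref{rem:divisible-unital} lets me translate the hypothesis $\Dv_2(u+v,A)\leq N$ into the existence of $x\in \Cu(A)$ with $2x\leq u+v\leq Nx$. I shall then build the $x_i$ out of the ``$v$-parts'' of $x$.

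First I would apply (P1) to $2x\leq u+v$ to obtain $\alpha,\beta \in \Cu(A)$ with $\alpha\leq u$, $\beta\leq v$, $\alpha,\beta\leq 2x$, and $2x\leq \alpha+\beta$. Multiplying by $N$ and using $v\leq u+v\leq Nx$, this gives the preliminary estimate
$2v\leq 2Nx\leq N\alpha+N\beta\leq Nu+N\beta$ with $\beta\leq v$.
If I could simply take $x_i:=\beta$, then after adding $v+(N+1)u$ to both sides I would arrive at $2v\leq v+(2N+1)u+\sum_{i=1}^N 2x_i$ with $\sum 2x_i=2N\beta$; the constraint $2x_i\leq v$ would, however, reduce to $2\beta\leq v$, which is \emph{not} delivered by Riesz decomposition — only $\beta\leq v$ is.

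The main obstacle is therefore precisely this doubling issue: how to convert the bound $\beta\leq v$ coming out of (P1) into a bound of the form $2x_i\leq v$ for some $x_i$, at the cost of absorbing an extra multiple of $u$ into the $(2N+1)u$ term. My strategy to overcome it is to re-apply the hypothesis $2x\leq u+v$ one more time, now on the doubled element: from $2\beta\leq 4x\leq 2(u+v)=2u+2v$, (P1) produces an ``almost halving'' of $\beta$ of the shape $\beta\leq \alpha'+x_i$ where $\alpha'\leq u$ and $2x_i\leq v$. Feeding this back into $N\beta$ replaces each copy of $\beta$ by $\alpha'+x_i$, contributing an extra $Nu$ on the right-hand side; combined with the $Nu$ already present and the extra $u$ needed to bring the $v$ back from $u+v\leq Nx$, this accounts exactly for the coefficient $(2N+1)$ of $u$.

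In carrying out the refinement I expect to have to pass to suitable $\ll$-approximations $\tilde x\ll x$, $\tilde \beta\ll \beta$ (using axiom (A2) and compactness of $u,v$) so that (P1) and (P2) can be applied with strict containment, and then to take suprema at the end. The bookkeeping of the $u$-coefficient, which needs to come out to exactly $2N+1$ rather than a larger constant, will be the delicate computational step; I expect it to hinge on using $v\leq u+v = 2x+w$ (via (P2) applied to $2x\ll 2x\leq u+v$) and decomposing $v$ against this equality, rather than the cruder $v\leq Nx$, so that only $u$ (and not redundant copies of $v$) leaks into the $u$-term.
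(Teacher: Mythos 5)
Your opening moves are fine and parallel the paper's: since $u+v$ is compact, the hypothesis gives $x$ with $2x\le u+v\le Nx$, and (P1) (with suitable $\ll$-approximations) extracts a piece of $x$ dominated by $v$. The gap is in your pivotal step: the claim that ``from $2\beta\le 4x\le 2u+2v$, (P1) produces $\beta\le \alpha'+x_i$ with $\alpha'\le u$ and $2x_i\le v$'' is not something (P1) can deliver, and it is false in general. (P1) only ever yields summands dominated by a \emph{single} copy of each term on the right (so $x_i\le v$, never $2x_i\le v$), and $\Cu(A)$ admits no halving that would convert such a bound into a doubled one. Concretely, take $A=M_2(\C)$, $u=0$, $v=\langle 1_A\rangle$, $x=\langle e_{11}\rangle$, so $2x\le v\le 2x$ and the hypothesis holds with $N=2$; your (P1) step then forces $\beta=v$, and there is no element $x_i$ with $v\le \alpha'+x_i$, $\alpha'\le u=0$ and $2x_i\le v$. (The lemma itself is fine in this example -- take $x_1=x$ -- but not via a halving of $\beta$.) Your closing suggestion of applying (P2) to $2x\ll 2x\le u+v$ also does not produce the doubled bound: decomposing $v$ against $2x+w$ again only yields pieces $\le 2x$ or $\le w$, each with coefficient one.

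The paper obtains $2x_j\le v$ by a different mechanism: it splits $v$ into two \emph{complementary} pieces and takes the $x_j$ below both. In outline: pick $x''\ll x'\ll x$ with $u+v\le Nx''$; (P1) applied to $x'\ll x\le u+v$ gives $v_1\le x, v$ with $x'\le u+v_1$, and $v_1'\ll v_1$ with $x''\le u+v_1'$; (P2) applied to $v_1'\ll v_1\le v$ gives $v_2$ with $v_1'+v_2\le v\le v_1+v_2$, and compactness of $v$ gives $v_2'\ll v_2$ with $v\le v_1+v_2'$; finally (P1) applied to $v_2'\ll v_2\le Nx''\le Nu+Nv_1'$, leaving the $N$ copies of $u$ untouched, produces $x_1,\dots,x_N$ with $x_j\le v_1'$, $x_j\le v_2$ and $v_2'\le Nu+\sum_j x_j$. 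Because each $x_j$ lies below \emph{both} complementary pieces, $2x_j\le v_1'+v_2\le v$, and then $2v\le 2v_1+2v_2'\le 2x+2Nu+2\sum_j x_j\le v+(2N+1)u+2\sum_j x_j$, the last step using $2x\le u+v$. Note that the $N$ distinct elements are essential: each $x_j$ is individually small (below $v_1'$), and only their sum together with $Nu$ covers $v_2'$, whereas your sketch tries to serve each copy of $\beta$ with a single element, which is exactly what fails.
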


\begin{proof}
By assumption there exists $x$ such that $2x\leq u+v \leq Nx$. By compactness of $u+v$ we can find $x'' \ll x' \ll x$ such that $u+v \le Nx''$. Since $x' \ll x \le u+v$, it follows from Property (P1) of the Cuntz semigroup (see Section~\ref{sec:preliminary}) (leaving $u$ unchanged) that there exists $v_1$ such that 
$$x' \le u+v_1, \qquad v_1 \le x, \qquad v_1 \le v.$$ 
As $x'' \ll u+v_1$ there is $v_1' \ll v_1$ such that $x'' \le u+v_1'$. Apply (P2) to the relation $v_1' \ll v_1 \le v$ to obtain $v_2$ satisfying
$$v_1'+v_2 \le v \le v_1 + v_2.$$
By compactness of $v$ we can find $v_2' \ll v_2$ such that $v \le v_1+v_2'$. Now,
$$v_2' \ll v_2 \le u+v \le Nx'' \le Nu + Nv_1',$$
and so we can use (P1) (leaving $Nu$ unchanged) to find $x_1, \dots, x_N$ such that 
$$v_2' \le Nu + \sum_{j=1}^Nx_j, \qquad x_j \le v_1', \qquad x_j \le v_2.$$
It follows that $2x_j \le v_1'+v_2 \le v$ and that
$$2v \le 2v_1 + 2v_2' \le 2x + 2Nu + 2 \sum_{j=1}^Nx_j \le v + (2N+1)u +2 \sum_{j=1}^Nx_j,$$
as desired.
\end{proof}

\noindent The corollary below illustrates how the preceding lemma can be used to find elements with bad divisibility properties:

\begin{corollary} \label{cor:2-div-obstruct}
Let $X$ be a compact  Hausdorff space and suppose that $p\in C(X)\otimes \mathcal K$ is a projection such that $[1]\nleq (2N+1)[p]$ in $K_0(C(X))$, where $1$ denotes the unit of $C(X)$. Then $\Dv_2(\langle 1\rangle+\langle p\rangle, C(X))> N$.  
\end{corollary}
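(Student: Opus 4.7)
The plan is to argue by contradiction using Lemma \ref{lm:2-div}. I would assume $\Dv_2(\langle 1 \rangle + \langle p \rangle, C(X)) \le N$. Since $p$ and the unit $1$ are projections, the classes $u = \langle p \rangle$ and $v = \langle 1 \rangle$ are compact in $\Cu(C(X))$, so Lemma \ref{lm:2-div} supplies elements $x_1, \dots, x_N \in \Cu(C(X))$ with $2 x_i \le \langle 1 \rangle$ for every $i$ and
\[
2\langle 1 \rangle \le \langle 1 \rangle + (2N+1) \langle p \rangle + \sum_{i=1}^{N} 2 x_i.
\]

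The crux of the plan is to prove that each $x_i$ vanishes. Writing $x_i = \langle c_i \rangle$ for some positive $c_i \in C(X) \otimes \mathcal K$, the relation $2 \langle c_i \rangle \le \langle 1 \rangle$ together with Lemma \ref{lm:comp}(ii) yields, for each $\ep > 0$, two orthogonal positive elements $a, b$ in the hereditary subalgebra $\overline{1 \cdot (C(X) \otimes \mathcal K) \cdot 1} \cong C(X)$, each Cuntz equivalent to $(c_i - \ep)_+$. Since $a$ and $b$ both lie in the commutative algebra $C(X)$ and are Cuntz equivalent, they share the same open support; being orthogonal, this common support is empty, so $a = b = 0$. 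Hence $(c_i - \ep)_+ = 0$ for every $\ep > 0$, forcing $c_i = 0$ and $x_i = 0$.

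Plugging $x_i = 0$ into the displayed inequality reduces it to $2 \langle 1 \rangle \le \langle 1 \rangle + (2N+1) \langle p \rangle$ in $\Cu(C(X))$. As both sides are Cuntz classes of projections in $C(X) \otimes \mathcal K$, this inequality is equivalent to Murray--von Neumann subequivalence of the corresponding projections, yielding $2[1] \le [1] + (2N+1)[p]$ in the Murray--von Neumann semigroup of $C(X)$. Cancelling $[1]$ in the group $K_0(C(X))$ then gives $[1] \le (2N+1)[p]$, contradicting the hypothesis.

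The step I expect to require the most care is the vanishing of the $x_i$'s, specifically the clean identification of the hereditary subalgebra $\overline{1 \cdot (C(X) \otimes \mathcal K) \cdot 1}$ with $C(X)$ and the observation that in a commutative \Cs{} two Cuntz equivalent positive elements share the same open support. After that, the passage from Cuntz to Murray--von Neumann comparison for projections and the cancellation in the abelian group $K_0$ are standard.
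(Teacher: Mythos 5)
Your proof is correct and follows the same route as the paper: apply Lemma~\ref{lm:2-div} with $u=\langle p\rangle$, $v=\langle 1\rangle$, observe that $2x\le\langle 1\rangle$ forces $x=0$, and then rule out $2\langle 1\rangle\le\langle 1\rangle+(2N+1)\langle p\rangle$ via the $K_0$-hypothesis. The only difference is that you spell out the two facts the paper merely asserts (the vanishing of the $x_i$'s via supports in $C(X)$, and the passage from Cuntz to Murray--von Neumann comparison of projections and then to $K_0$), and both of these details are argued correctly.
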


\begin{proof}
Consider the compact elements $u=\langle p\rangle$ and $v=\langle 1\rangle $ of $\Cu(C(X))$. Note that $2x \le \langle 1 \rangle$ implies $x=0$ and that $2 \langle 1 \rangle \nleq \langle 1 \rangle + (2N+1) \langle p \rangle$.  The desired conclusion now follows from Lemma~\ref{lm:2-div}. 
\end{proof}

\noindent The relation $[1]\nleq (2N+1)[p]$ in $K_0(C(X))$ is satisfied whenever the $(2N+1)$-fold direct sum of $p$ with itself is a projection with non-trivial Euler class (as explained in more detail below). It is known that for each integer $d \ge 1$ and for each positive integer $N$ there exist $X$ and $p \in C(X) \otimes \mathcal K$ such that $(2N+1)[p]$ has non-trivial Euler class and $p$ has rank $d$.  The unital \Cs{} $A=(p \oplus 1) \big( C(X) \otimes \cK\big)(p \oplus 1)$ with this choice of $X$ and $p$ will then satisfy $\rnk(A) = d$ and $\Dv_2(A) > N$. 

We will now give a different method for constructing elements with large divisibility constants and large rank, where we get upper bounds and where we also can give sharper lower bounds for the weak divisibility constant. 
Let $S^2$ denote the 2-dimensional sphere.
Let $p$ denote the ``Bott-projection" in $C(S^2) \otimes M_2 \subseteq C(S^2) \otimes \cK$, i.e., the projection associated to the Hopf line bundle over $S^2$. 
For each $1 \le j \le N$, let $p_j \in C((S^2)^N) \otimes \cK$ be given by
\[
p_j(x_1,x_2, \dots, x_N) = p(x_j), \qquad (x_1, \dots, x_N) \in (S^2)^N.
\]
Since $\langle 1\rangle \leq 2\langle p\rangle $ in $\Cu(C(S^2))$, we have
\[
N\langle 1\rangle\leq 2\Big\langle \bigoplus_{i=1}^Np_i\Big\rangle.
\]

\noindent
As another obstruction to weak divisibility, we shall use the following corollary of Lemma~\ref{lm:Z} (iii), cf.\ the remarks at the beginning of this section,  applied to the relations $\langle 1 \rangle \le 2 \langle p_i \rangle$, $i=1,2 \dots, N$.

\begin{corollary} \label{cor:key}
Let $X$ be a locally compact Hausdorff space, and let $q\in C_0(X)\otimes\mathcal K$ be a projection. Let $(p_i)_{i=1}^N$ be the projections in $C((S^2)^N)\otimes\mathcal K$ defined in the preceding paragraph. Suppose that
\[
 q\otimes 1 \; \nprecsim  \; q\otimes \bigoplus_{i=1}^Np_i.
\]
Then $\dv_2(\langle q \rangle, C_0(X))>N$.
\end{corollary}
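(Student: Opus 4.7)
The plan is to argue by contradiction: suppose $\dv_2(\langle q\rangle, C_0(X)) \le N$ and derive a Cuntz comparison between $q \otimes 1$ and $q \otimes \bigoplus_i p_i$ that is forbidden by hypothesis. Since $q$ is a projection, $\langle q\rangle$ is a compact element of $\Cu(C_0(X))$ (it satisfies $\langle q\rangle \ll \langle q\rangle$), so the simplified form of weak $(m,n)$-divisibility in Remark~\ref{rem:divisible-unital} applies to $u = \langle q\rangle$ and produces elements $x_1, x_2, \dots, x_N \in \Cu(C_0(X))$ with
\[
2x_j \le \langle q\rangle \quad \text{for all } j, \qquad \langle q\rangle \le x_1 + x_2 + \cdots + x_N.
\]
This is essentially the setup for applying Lemma~\ref{lm:Z}(iii), except that the relevant compact element of $\Cu(C_0(X))$ is $\langle q\rangle$ rather than a unit, and $C_0(X)$ need not be unital; nevertheless, the same two-line computation carries over.

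To execute the comparison, I would tensor all relations with elements of $\Cu(C((S^2)^N))$ and use the canonical bi-additive, order-preserving tensor map $\Cu(C_0(X)) \times \Cu(C((S^2)^N)) \to \Cu(C_0(X) \otimes C((S^2)^N))$ recalled at the start of Section~\ref{sec:divcomp}. The key input on the $(S^2)^N$ side is that $\langle 1\rangle \le 2\langle p_j\rangle$ holds in $\Cu(C((S^2)^N))$ for every $j$, which follows by tensoring the standard relation $\langle 1\rangle \le 2\langle p\rangle$ in $\Cu(C(S^2))$ (coming from the Bott projection) with the units of the remaining factors. Combining the two families of relations:
\[
x_j \otimes \langle 1\rangle \;\le\; x_j \otimes 2\langle p_j\rangle \;=\; (2x_j) \otimes \langle p_j\rangle \;\le\; \langle q\rangle \otimes \langle p_j\rangle
\]
for each $j$. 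Summing over $j$ and using $\langle q\rangle \le \sum_j x_j$ yields
\[
\langle q\rangle \otimes \langle 1\rangle \;\le\; \sum_{j=1}^{N} x_j \otimes \langle 1\rangle \;\le\; \sum_{j=1}^{N} \langle q\rangle \otimes \langle p_j\rangle \;=\; \Big\langle q \otimes \bigoplus_{i=1}^{N} p_i \Big\rangle,
\]
i.e.\ $q \otimes 1 \precsim q \otimes \bigoplus_{i=1}^N p_i$, contradicting the hypothesis.

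There is no genuine obstacle here; the proof is essentially a bilinear chase. The only point that merits care is that $C_0(X)$ may be non-unital, so Lemma~\ref{lm:Z}(iii) cannot be invoked verbatim with $\langle q\rangle$ in the place of $\langle 1_A\rangle$. However, the compactness of $\langle q\rangle$ lets us take $u' = \langle q\rangle$ in the definition of weak divisibility, after which the argument of Lemma~\ref{lm:Z}(iii) applies line by line with $\langle q\rangle$ playing the role of the unit.
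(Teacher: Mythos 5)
Your proposal is correct and follows essentially the same route as the paper: the authors obtain Corollary~\ref{cor:key} by applying the computation of Lemma~\ref{lm:Z}~(iii) (in contrapositive form, as in the remarks opening Section~\ref{sec:obstruct}) to the relations $\langle 1\rangle \le 2\langle p_i\rangle$, with $\langle q\rangle$ playing the role of the unit. Your explicit handling of the non-unitality of $C_0(X)$ via compactness of $\langle q\rangle$ is exactly the point the paper leaves implicit.
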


\noindent
Let us now give examples of projections  to which the corollary above can be applied. 
We will make use of characteristic classes of vector bundles. 
Recall that projections in  $C(X) \otimes \cK$, with $X$ compact and Hausdorff, give rise to vector bundles over $X$: if $p$ is a projection, then
$\eta_p=(E_p,X,\pi)$, with $E_p=\{(x,v)\in X\times l_2(\N)\mid p(x)v=v\}$ is the vector bundle associated to $p$.
Up to Murray-von Neumann equivalence of projections and isomorphism of vector bundles, this correspondence is a bijection.
We denote by $e(\eta_p)\in H^*(X)$, or simply $e(p)$, the Euler class of $\eta_p$. 
For the cartesian product of spheres  $(S^2)^N$ we have (e.g., by the K\"unneth formula)
that 
\[
H^*((S^2)^N)\cong \C[z_1,z_2,\dots,z_N]/(z_1^2,z_2^2,\dots,z_N^2).\] 
With this identification, the Euler classes  of the projections $p_i\in C((S^2)^N)\otimes \mathcal K$ defined earlier can be shown to be $e(p_i)=z_i$.

\begin{proposition} \label{prop:euler-obstruct}
Let $X$ be a compact Hausdorff space and let $q\in C(X)\otimes \cK$ be a projection such that $e(q)^N\neq 0$. 
Then
\[
\dv_2(\langle 1\oplus q\rangle,C(X))>N.
\] 
\end{proposition}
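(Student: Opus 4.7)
The plan is to invoke Corollary~\ref{cor:key} with the projection $q$ there replaced by $1 \oplus q$. This reduces the problem to showing
$$(1 \oplus q) \otimes 1 \nprecsim (1 \oplus q) \otimes \bigoplus_{i=1}^N p_i$$
as projections in $C(X \times (S^2)^N) \otimes \cK$; the Euler-class hypothesis $e(q)^N \neq 0$ will supply the needed obstruction.

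First I would translate Murray--von Neumann subequivalence of projections over a compact Hausdorff space into a direct-sum relation of the associated vector bundles. If the displayed subequivalence held, the bundles on $X \times (S^2)^N$ would satisfy
$$\eta_{(1\oplus q)\otimes \bigoplus_i p_i} \;\cong\; \eta_{(1\oplus q)\otimes 1} \;\oplus\; \xi$$
for some vector bundle $\xi$. I would then compare the complex Euler classes of the two sides to reach a contradiction.

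Multiplicativity of $e$ under direct sums, together with $e(1)=0$ for the trivial line bundle, gives $e(\eta_{1 \oplus q}) = e(1) \cdot e(q) = 0$, so the pulled-back class $e(\eta_{(1 \oplus q) \otimes 1})$ also vanishes, and therefore $e(\eta_{(1 \oplus q) \otimes 1} \oplus \xi) = 0$. On the other side, using the decomposition $(1 \oplus q) \otimes \bigoplus_i p_i \cong \bigoplus_i \bigl( (1 \otimes p_i) \oplus (q \otimes p_i) \bigr)$ and the fact that the Chern roots of $q \otimes p_i$ have the form $b_j + z_i$ (with $b_j$ the Chern roots of $q$), the relation $z_i^2 = 0$ yields
$$e(q \otimes p_i) \;=\; \prod_{j=1}^{r} (b_j + z_i) \;=\; e(q) + z_i \, c_{r-1}(q), \qquad r := \rnk q,$$
hence $e\bigl( (1 \otimes p_i) \oplus (q \otimes p_i) \bigr) = z_i \bigl( e(q) + z_i\, c_{r-1}(q) \bigr) = z_i \, e(q)$. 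Multiplying over $i$,
$$e\Bigl( (1 \oplus q) \otimes \bigoplus_{i=1}^N p_i \Bigr) \;=\; e(q)^N \cdot z_1 z_2 \cdots z_N.$$

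The only substantive point is to observe that this class is nonzero. By the K\"unneth theorem, $H^*(X \times (S^2)^N) \cong H^*(X) \otimes H^*((S^2)^N)$; since $z_1 z_2 \cdots z_N$ is a nonzero top class in the second factor and $e(q)^N \neq 0$ in the first by hypothesis, the product is indeed nonzero. This contradicts the vanishing obtained on the other side, so the desired subequivalence fails, and Corollary~\ref{cor:key} then yields $\dv_2(\langle 1 \oplus q \rangle, C(X)) > N$, as desired.
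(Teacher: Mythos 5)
Your proof is correct and follows essentially the same route as the paper: reduce via Corollary~\ref{cor:key} to showing $(1\oplus q)\otimes 1 \nprecsim (1\oplus q)\otimes \bigoplus_{i=1}^N p_i$, observe that subequivalence would force the Euler class of the right-hand side to vanish (since the left-hand side contains a trivial line bundle), and compute that Euler class to be $e(q)^N z_1\cdots z_N \neq 0$ by K\"unneth. The only cosmetic difference is that you derive the formula $e(q\otimes p_i)=e(q)+z_i\,c_{r-1}(q)$ via Chern roots, whereas the paper simply cites the same identity.
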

\begin{proof}
By Corollary \ref{cor:key} it suffices to show that 
\begin{align}\label{thetrick}
(1_X\oplus q)\otimes 1_{(S^2)^N}  \nprecsim (1_X \oplus q)\otimes \bigoplus_{i=1}^N p_i
\end{align}
in $\Cu(C(X) \otimes C((S^2)^N))$, where $1_X$ denotes the unit in $C(X)$. (In the formulation of the proposition above, we denoted $1_X$ simply by $1$.)
Observe that the trivial rank 1 projection is a subprojection of the projection on the left-hand side of \eqref{thetrick}. Thus, it suffices to show that
the right side of \eqref{thetrick} has non-zero Euler class.

Set $\rnk(q)=k$. For each positive integer $i$, let $c_i(q)\in H^{2i}(X)$ denote the $i$th characteristic class of $q$
(so that $c_{k}(q)=e(q)$). By the K\"unneth Theorem (\cite[Theorem A.6]{milnor1974characteristic}), we can identify 
$H^{*}(X\times (S^2)^N)$ with  $H^{*}(X)\otimes H^{*}((S^2)^N)$. Then
\begin{align*}
e\big((1_X\oplus q)\otimes \bigoplus_{i=1}^N p_i\big) 
&=e\big(1_X \otimes \bigoplus_{i=1}^N p_i\big) \, e\big(q\otimes \bigoplus_{i=1}^N p_i \big)\\
&=\prod_{i=1}^N e(1_X\otimes p_i) \, \prod_{i=1}^N e(q\otimes p_i)\\
&=\prod_{i=1}^N e(p_i) \, \prod_{i=1}^N\sum_{j=0}^k c_{k-j}(q)e(p_i)^j\\
&=\prod_{i=1}^N e(p_i) \, e(q)^N\neq 0.
\end{align*}
In the above computation we have used that $e(q\otimes p)=\sum_{j=0}^k c_{k-j}(q)e(p)^{j}$, for $q$ a projection of rank $k$ and $p$ a projection of rank 1. To obtain  the last equality we have used that $e(p_i)^2=0$ for all $i$.   
\end{proof}

\noindent
Let us now give examples of families of projections  to which the above proposition can be applied. We shall here and in the following, whenever $p$ is a projection (in a \Cs) and $n$ is a positive integer, let $n \! \cdot \! p$ denote the $n$-fold direct sum, $p \oplus p \oplus \cdots \oplus p$, (in a matrix algebra over the given \Cs) of the projection $p$.  

\begin{example}
Let $N$ be a positive integer, and let $\C\mathrm{P}^N$ denote the $2N$-dimensional complex projective space. Let $\eta$ denote the tautological line bundle over $\C\mathrm{P}^N$
and $p_{\eta}$ the rank 1 projection associated to it. It is known that $e(p_\eta)=z^2\in C[z^2]/(z^{2N})$, where we have
identified $H^*(\C\mathrm{P}^N)$ with $C[z^2]/(z^{2N})$. Let $d,d'$ be positive integers such that $dd'<N$. Then $e(d \!\cdot \! p_\eta)^{d'}=z^{2dd'}\neq 0$.
It follows that
\[
\left\lfloor\frac{N-1}{d}\right\rfloor <  
\dv_2(\langle 1\oplus d \!\cdot \!  p_\eta \rangle ,C(\C\mathrm{P}^N))
\le \Dv_2(\langle 1\oplus d \!\cdot \!  p_\eta \rangle,C(\C\mathrm{P}^N)) \le \left\lceil \frac{N+d+1/2}{\lfloor d/2 \rfloor}\right\rceil .
\]
Indeed, the first inequality follows from Proposition~\ref{prop:euler-obstruct} and the calculations made above. The second inequality follows from Proposition~\ref{prop:a}. The last inequality can be proved as follows: Put $x = \lfloor d/2 \,\rfloor \langle p_\eta \rangle$. Then $2x \le \langle 1\oplus d \!\cdot \!  p_\eta \rangle$. By a classical result about vector bundles (see \cite[Chapter 9, Proposition 1.1]{Hus:fibre}) we have that $1 \precsim k \!\cdot \!  p_\eta$ if $2N \le 2k-1$. It follows that
$$1\oplus d \!\cdot \!  p_\eta \precsim n \, \lfloor d/2 \,\rfloor \, p_\eta,$$
or, equivalently, that $\langle 1\oplus d \!\cdot \!  p_\eta \rangle \le nx$, if $n \, \lfloor d/2 \,\rfloor \ge N+d+1/2$. 
\end{example}

\subsection*{Simple \Cs s with bad divisibility}

\noindent In this and the following two subsections we give examples of unital simple \Cs s with bad divisibility behaviour. They use the Euler class obstruction described in the following example. 

\begin{example} \label{ex:spheres}
Let $d$ be a positive integer. 
Following the notation in \cite{Ror:simple}, for each set $I = \{i_1,i_2, \dots, i_k\} \subseteq \{1,2, \dots, d\}$, let $p_I$ be the one-dimensional projection in $C((S^2)^d) \otimes \cK$ given by
\[
p_I(x) = p_{i_1}(x) \otimes p_{i_2}(x) \otimes \cdots \otimes p_{i_k}(x), \qquad x \in (S^2)^d,
\]
where $p_i$ is as defined above Corollary~\ref{cor:key}. It is  shown in \cite[Proposition 4.5]{Ror:simple} that if $I_1, I_2 \dots, I_r$ are subsets of $\{1,2, \dots, d\}$ that admit a matching (i.e., $|\bigcup_{i \in F} I_i| \ge |F|$ for all subset $F$ of $\{1,2, \dots, r\}$) then the Euler class of $p_{I_1} \oplus p_{I_2} \oplus \cdots \oplus p_{I_r}$ is non-zero. 
\end{example}

\noindent  The examples constructed in this and the following two subsectons are built on the same template described in the following lemma (which is a variation of one of Villadsen's constructions). We retain the terminology from the example above throughout the rest of this section. 

\begin{lemma} \label{lm:template} Let $\big(J_j\big)_{j=1}^\infty$ be a sequence of  pairwise disjoint subsets of $\N$. Choose $d_n$ large enough so that all $J_j$,  $j=1,2, \dots, n$, are contained in the set  $\{1,2, \dots, d_n\}$. (This, of course, can be accomplished by taking $d_n = \sum_{j=1}^n |J_j|$.) Consider the projection $q_n$ of rank $2^n$ in $C((S^2)^{d_n}) \otimes \cK$ given by
$$q_n = 1 \oplus p_{J_1} \oplus 2 \! \cdot \! p_{J_2} \oplus \cdots \oplus 2^{n-1} \! \cdot \! p_{J_n}.$$ 
It follows that there is a simple unital AH-algebra $A$ which is the inductive limit of the sequence
$$\xymatrix{ q_1 \big( C((S^2)^{d_1}) \otimes \cK\big) q_1 \ar[r]^-{\varphi_1} & q_2 \big( C((S^2)^{d_2}) \otimes \cK\big) q_2 \ar[r]^-{\varphi_2} &  \cdots \ar[r] & A,}$$
where the connecting mappings $\varphi_n$ are unital. 
\end{lemma}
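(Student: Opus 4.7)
The plan is to construct the unital connecting $*$-homomorphisms $\varphi_n$ from an orthogonal decomposition of $q_{n+1}$, and then to arrange simplicity by a density argument on point evaluations. Set $X_n := (S^2)^{d_n}$ and $A_n := q_n(C(X_n)\otimes\cK)q_n$, and let $\pi_n \colon X_{n+1} \to X_n$ denote the canonical coordinate projection, which is well-defined because $J_1 \cup \cdots \cup J_n \subseteq \{1,\ldots,d_n\}$. A direct computation from the definition of $q_n$ gives the orthogonal decomposition
\[
q_{n+1} \;=\; q_n\circ\pi_n \;+\; 2^n\!\cdot\!p_{J_{n+1}}
\]
in $C(X_{n+1})\otimes\cK$. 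I will then define $\varphi_n = \psi_n + \sigma_n$, where $\psi_n$ and $\sigma_n$ are unital $*$-homomorphisms landing in the two corners cut out by the orthogonal subprojections on the right.

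The first piece $\psi_n\colon A_n\to (q_n\circ\pi_n)(C(X_{n+1})\otimes\cK)(q_n\circ\pi_n)$ will be pullback along $\pi_n$, i.e.\ $\psi_n(a)(y):=a(\pi_n(y))$. For the second piece, I will use that the line bundle $L_{n+1}$ associated to the one-dimensional projection $p_{J_{n+1}}$ has canonically trivial endomorphism bundle, since $L_{n+1}^*\otimes L_{n+1}\cong\mathbf{1}$ for any line bundle. Consequently $\mathrm{End}(2^nL_{n+1})$ is a canonically trivial $M_{2^n}$-bundle, and the second corner is canonically identified as
\[
2^n p_{J_{n+1}}(C(X_{n+1})\otimes\cK)2^n p_{J_{n+1}} \;\cong\; C(X_{n+1})\otimes M_{2^n}.
\]
After fixing a point $y_0^{(n)}\in X_n$ and an identification $q_n(y_0^{(n)})\cK q_n(y_0^{(n)})\cong M_{2^n}$, the map $\sigma_n$ will send $a$ to the constant function $y\mapsto a(y_0^{(n)})$ in $C(X_{n+1})\otimes M_{2^n}$; this sends $q_n$ to the unit, which corresponds to $2^n\!\cdot\!p_{J_{n+1}}$, so $\varphi_n$ is unital.

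To ensure simplicity of $A:=\varinjlim A_n$, I will choose the sequence $(y_0^{(n)})$ carefully. The closed two-sided ideals of the homogeneous algebra $A_n$ correspond bijectively to closed subsets $F_n\subseteq X_n$ via $I_n=\{a\in A_n\colon a|_{F_n}=0\}$. Inspection of $\varphi_n$ shows that an ideal of $A$ corresponds to a compatible family $(F_n)$ satisfying $\pi_n(F_{n+1})\subseteq F_n$ and $y_0^{(n)}\in F_n$ whenever $F_{n+1}\ne\emptyset$. I will pick the points so that the set $\{(\pi_n\circ\pi_{n+1}\circ\cdots\circ\pi_{k-1})(y_0^{(k)})\colon k\ge n\}$ is dense in $X_n$ for every $n$; this is possible by a diagonal enumeration of a countable dense subset of the inverse limit $\varprojlim(X_n,\pi_n)$. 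With this choice, any compatible family $(F_n)$ with $F_n\ne\emptyset$ for every $n$ is forced to satisfy $F_n=X_n$ for every $n$, so the only ideals of $A$ are $\{0\}$ and $A$.

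The main technical point is the construction of $\sigma_n$: since the rank of the unit only doubles between consecutive steps, there is room for at most two multiplicity-one components in $\varphi_n$, and the usual ``coordinate projection plus point evaluation'' recipe would require a continuous family of partial isometries between the constant trivial rank-$2^n$ bundle and the generally nontrivial bundle $2^nL_{n+1}$, which does not exist. The fact that it is the endomorphism bundle $\mathrm{End}(2^nL_{n+1})$ (rather than $2^nL_{n+1}$ itself) that needs to be trivial in order to realize $\sigma_n$ as a point evaluation is what makes the whole construction go through.
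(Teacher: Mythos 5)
Your proposal is correct and follows essentially the same route as the paper: the connecting map is the coordinate-projection pullback plus a point evaluation landing in the corner cut out by $2^n\!\cdot\!p_{J_{n+1}}$ (the paper writes this summand concretely as $f\mapsto f(x_n)\otimes p_{J_{n+1}}$, which is exactly your canonical identification of that corner with $C(X_{n+1})\otimes M_{2^n}$ via triviality of the endomorphism bundle of a multiple of a line bundle), and simplicity comes from choosing the evaluation points with dense forward orbits. Your ideal-lattice formulation of the simplicity step is just a rephrasing of the paper's argument that images of nonzero elements eventually become full, so no substantive difference remains.
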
 

\begin{proof} Set $X_n = (S^2)^{d_n}$ and $A_n = q_n \big( C(X_n) \otimes \cK\big) q_n$.  Write $$X_{n+1} = X_n \times (S^2)^{d_{n+1}-d_n},$$  let $\pi_n \colon X_{n+1} \to X_n$ be the projection mapping, and let $\pi_{m,n} \colon X_n \to X_m$ denote the composition map $\pi_m \circ \pi_{m+1} \circ \cdots \circ \pi_{n-1}$. Choose $x_n \in X_n$ for each $n$ such that the set $ \{\pi_{m,n}(x_n) \mid n \ge m\}$ is dense in $X_m$ for all $m \ge 1$. 

Define a \sh{} $\varphi_n^0 \colon C(X_n,\cK) \to C(X_{n+1}, \cK)$ by
$$\varphi_n^0(f)(x) = f(\pi_n(x)) \, \oplus \,  \big(f(x_n) \otimes p_{J_{n+1}}(x)\big), \qquad f \in C(X_n,\cK), \quad x \in X_{n+1},$$
where we in a suitable way have identified $\cK \oplus (\cK \otimes \cK)$ with a subalgebra of $\cK$. We also identify $C(X) \otimes \cK$ with $C(X, \cK)$. We make another identification: if 
$$J \subseteq \{1,2, \dots, d_n\} \subseteq \{1,2, \dots, d_{n+1}\} ,$$
then the projection $p_J$ is defined both in $C(X_n) \otimes \cK$ and in $C(X_{n+1}) \otimes \cK$ and $p_{J_n} = p_{J_n} \circ \pi_n$ (where the former occurrence of $p_{J_n}$ is viewed as an element in the former algebra, and the latter in the latter). We shall use the same notation for the two projections. Taking these identification a step further, we have $q_n = q_n \circ \pi_n$ and that $q_{n+1} = q_n \oplus 2^n \! \cdot \! p_{J_{n+1}}$. (These identification hold, strictly speaking, only up to conjugation with an inner automorphism on $\cK$.) In this notation we get
\begin{eqnarray*}
\varphi_n^0(q_n)(x) & = & q_n(\pi_n(x)) \, \oplus \,  \big(q_n(x_n) \otimes p_{J_{n+1}}(x) \big) \\ 
&= &q_n(\pi_n(x)) \, \oplus \, \rnk(q_n) \! \cdot \! p_{J_{n+1}}(x) \; = \; q_{n+1}(x),
\end{eqnarray*}
for all $x \in X_{n+1}$, i.e., $\varphi_n^0(q_n) = q_{n+1}$ (possibly after composing $\varphi_n^0$ with an inner automorphism on $\cK$). This shows that $\varphi_n^0$ maps $A_n$ unitally into $A_{n+1}$. Let $\varphi_n$ denote the unital \sh{} that arises in this way, i.e., $\varphi_n$ is the restriction of $\varphi_n^0$ to $A_n$ (and the co-restriction to $A_{n+1}$), and let $A$ be the inductive limit of the sequence
$$\xymatrix{A_1 \ar[r]^-{\varphi_1} & A_2 \ar[r]^-{\varphi_2} & A_3 \ar[r]^-{\varphi_3} & \cdots \ar[r] & A.}$$
Let $\varphi_{m,n} \colon A_m \to A_n$ denote the composition map $\varphi_{n-1} \circ \varphi_{n-2} \circ \cdots \circ \varphi_{m}$ when $m \le n$. One can check that $\varphi_{m,n}(f)(x)$ is non-zero for all $x \in X_n$ if $f$ is a function in $A_m = q_m C(X_m, \cK)q_m$ which is non-zero on at least one point in the set $\{\pi_{m,k}(x_k) \mid m \le k \le n\}$. By the choice of the points $x_n$, it follows that for each $m$ and for each non-zero $f$ in $A_m$ there is $n \ge m$ such that $\varphi_{m,n}(f)$ is full in $A_n$ (i.e., that $\varphi_{m,n}(f)(x) \ne 0$ for all $x \in X_n$). This entails that $A$ is simple.
\end{proof}

\begin{lemma} \label{lm:simple-1}
Let $N$ be a positive integer. In the notation of Lemma~\ref{lm:template} choose the sequence $(J_j)_{j=1}^\infty$ such that $|J_j| =  N \! \cdot \! 2^{n-1}$. It then follows that 
$$\dv_2\big(\langle q_n \rangle, C((S^2)^{d_n}) \big) > N, \qquad \Dv_2\big(\langle q_2 \rangle, C((S^2)^{d_2}\big) \le 3N+4.$$
for all $n$.
\end{lemma}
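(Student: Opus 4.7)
The plan splits into the lower and upper bounds.

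For the lower bound $\dv_2(\langle q_n\rangle, C((S^2)^{d_n}))>N$, I would write $q_n=1\oplus r_n$ where
\[
r_n = p_{J_1}\oplus 2\!\cdot\! p_{J_2}\oplus\cdots\oplus 2^{n-1}\!\cdot\! p_{J_n},
\]
and apply Proposition~\ref{prop:euler-obstruct} with $q=r_n$. The task reduces to showing $e(r_n)^N\ne 0$ in $H^*((S^2)^{d_n})\cong\C[z_1,\ldots,z_{d_n}]/(z_i^2)$. Each $p_{J_j}$ is a rank-one projection whose associated line bundle is the external tensor product of the Hopf line bundles pulled back from the factors indexed by $J_j$, so additivity of the first Chern class under tensor products of line bundles gives $e(p_{J_j})=\sum_{i\in J_j}z_i$; multiplicativity of the Euler class under direct sums then yields
\[
e(r_n)^N=\prod_{j=1}^n\Bigl(\sum_{i\in J_j}z_i\Bigr)^{N\cdot 2^{j-1}}.
\]
Since $|J_j|=N\cdot 2^{j-1}$ and $z_i^2=0$, the multinomial expansion of each factor collapses to the single squarefree top-degree monomial $(N\cdot 2^{j-1})!\,\prod_{i\in J_j}z_i$; disjointness of the $J_j$'s then makes $e(r_n)^N$ a nonzero scalar multiple of $\prod_{i\in J_1\cup\cdots\cup J_n}z_i$.

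For the upper bound $\Dv_2(\langle q_2\rangle, C((S^2)^{d_2}))\le 3N+4$, with $X=(S^2)^{3N}$ of real dimension $6N$, I would take $x=\langle p_{J_2}\rangle$ as the witness. The inequality $2x\le\langle q_2\rangle$ is immediate since $2p_{J_2}$ is a direct summand of $q_2=1\oplus p_{J_1}\oplus 2p_{J_2}$. The remaining inequality $\langle q_2\rangle\le(3N+4)x$ amounts to the subbundle statement $q_2\precsim(3N+4)p_{J_2}$ on $X$. Here I would invoke the classical splitting theorem (Husemoller, Ch.~9, Prop.~1.1), already used earlier in the paper: any complex rank-$k$ bundle $E$ over a CW-complex $X$ contains any prescribed line bundle $L$ as a direct summand whenever $\dim_\R X\le 2k-1$ (apply to nowhere-zero sections of $E\otimes L^{-1}$). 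The bound $6N\le 2k-1$ forces $k\ge 3N+1$, so I would apply this twice to the bundle $(3N+2)p_{J_2}$: first split off $p_{J_1}$ (rank is $3N+2$, and $2(3N+2)-1\ge 6N$), producing $(3N+2)p_{J_2}\cong p_{J_1}\oplus F$ with $\rnk F=3N+1$; then split off the trivial line bundle from $F$ (again $2(3N+1)-1\ge 6N$). This gives $1\oplus p_{J_1}\precsim(3N+2)p_{J_2}$, whence $q_2\precsim(3N+4)p_{J_2}$ after adding $2p_{J_2}$ to both sides.

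The main obstacle is the Euler class computation: one must correctly identify the line bundle attached to $p_{J_j}$ as the tensor product of pulled-back Hopf bundles, and then verify that $\bigl(\sum_{i\in J_j}z_i\bigr)^{N\cdot 2^{j-1}}$ reduces to the top squarefree monomial in the variables of $J_j$, which is precisely what the choice $|J_j|=N\cdot 2^{j-1}$ is tailored for. The upper bound is then essentially bookkeeping once the classical splitting lemma is invoked, exactly analogously to the computation in the preceding example over $\C\mathrm{P}^N$.
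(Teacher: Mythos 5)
Your proof is correct and follows the same skeleton as the paper's: for the lower bound you apply Proposition~\ref{prop:euler-obstruct} to $q_n=1\oplus r_n$ and reduce to $e(r_n)^N\neq 0$, and for the upper bound you take $x=\langle p_{J_2}\rangle$, note $2x\le\langle q_2\rangle$, and show $q_2\precsim(3N+4)\!\cdot\! p_{J_2}$. The difference is in how the two key facts are certified. For the Euler class, the paper simply quotes \cite[Proposition 4.5]{Ror:simple} (the matching criterion recalled in Example~\ref{ex:spheres}) applied to $N\!\cdot\! r_n$, whereas you compute directly in $H^*((S^2)^{d_n})\cong\C[z_1,\dots,z_{d_n}]/(z_i^2)$ using $e(p_{J_j})=\sum_{i\in J_j}z_i$ and the collapse of $\bigl(\sum_{i\in J_j}z_i\bigr)^{N\cdot 2^{j-1}}$ to the top square-free monomial; since the $J_j$ are pairwise disjoint this self-contained verification is legitimate and is in effect the computation underlying the cited proposition (the full matching criterion only becomes essential for the overlapping families in Lemma~\ref{lm:inf-tensor}). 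For the upper bound, the paper cites \cite[Proposition 1]{Dupre:vectorbundle}, which gives $q_2\precsim M\!\cdot\! p_{J_2}$ once $M-4\ge(2d_2-1)/2=3N-1/2$, while you iterate the stable-range splitting result from \cite[Chapter 9, Proposition 1.1]{Hus:fibre} already used in the $\C\mathrm{P}^N$ example: split $p_{J_1}$ and then a trivial line bundle off $(3N+2)\!\cdot\! p_{J_2}$ (the dimension checks $6N\le 2(3N+2)-1$ and $6N\le 2(3N+1)-1$ are as you state) and add $2p_{J_2}$, arriving at the same threshold $3N+4$. Both routes are sound; yours is more self-contained, the paper's citations are shorter and (in the lower bound) set up the tool reused later. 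Two cosmetic remarks: the hypothesis should read $|J_j|=N\!\cdot\!2^{j-1}$ (as you implicitly use, and as the paper's proof requires), and the line bundle attached to $p_{J_j}$ is the internal tensor product of the pulled-back Hopf bundles rather than an external tensor product, though the first Chern class computation you perform is the correct one.
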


\begin{proof} We use Proposition~\ref{prop:euler-obstruct} to prove the first claim. It suffices to show that the Euler class of the projection $N \! \cdot \! p_{J_1} \oplus 2N \! \cdot \! p_{J_2} \oplus \cdots \oplus 2^{n-1}N \! \cdot \! p_{J_n}$ is non-zero. But this follows from \cite[Proposition 4.5]{Ror:simple}, cf.\ Example~\ref{ex:spheres} above, and from the choice of the sets $J_n$. 

To prove the second claim, put $x = \langle p_{J_2} \rangle$ and note that $2x \le \langle q_2 \rangle$. It follows from \cite[Proposition 1]{Dupre:vectorbundle} that $q_2 \precsim M \! \cdot \! p_{J_2}$ if $M-4 \ge (2d_2-1)/2 = 3N-1/2$. This shows that $\langle q_2 \rangle \le (3N+4)x$.
\end{proof}

\begin{theorem} \label{thm:simple}
For each positive integer $N$ there exists a simple unital infinite dimensional AH-algebra $A$ such that $N < \dv_2(A) \le \Dv_2(A) \le 3N+4$.
\end{theorem}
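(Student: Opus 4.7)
The plan is to combine Lemma~\ref{lm:template} and Lemma~\ref{lm:simple-1} with the sequential continuity of the divisibility numbers under inductive limits (Proposition~\ref{prop:inductive-limits}). Choose pairwise disjoint finite subsets $(J_j)_{j=1}^\infty$ of $\N$ with $|J_j|=N\cdot 2^{j-1}$, and let $A=\varinjlim(A_n,\varphi_n)$ be the simple unital AH-algebra produced by Lemma~\ref{lm:template}, where $A_n=q_n(C((S^2)^{d_n})\otimes\cK)q_n$. Since the connecting maps $\varphi_n$ are unital, $\Cu(\varphi_n)$ carries $\langle 1_{A_n}\rangle$ to $\langle 1_{A_{n+1}}\rangle$, so $\langle 1_A\rangle$ is compact in $\Cu(A)$.

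Next, I would apply Lemma~\ref{lm:simple-1} with the choice $|J_j|=N\cdot 2^{j-1}$ to read off the finite-stage bounds
\[
\dv_2(\langle 1_{A_n}\rangle,A_n)>N \quad\text{for every }n,\qquad \Dv_2(\langle 1_{A_2}\rangle,A_2)\le 3N+4.
\]
(Here I identify $\Cu(A_n)$ with $\Cu(C((S^2)^{d_n}))$ in the usual way, so that $\langle 1_{A_n}\rangle$ corresponds to $\langle q_n\rangle$.) Because $\langle 1_A\rangle$ is compact, Proposition~\ref{prop:inductive-limits} upgrades the first bound to an equality, giving
\[
\dv_2(A)=\inf_n \dv_2(\langle 1_{A_n}\rangle,A_n)\ge N+1,
\]
while the general (non-compact) direction of the same proposition (or simply Remark~\ref{rem:image} applied to $A_2\hookrightarrow A$) yields $\Dv_2(A)\le\Dv_2(\langle 1_{A_2}\rangle,A_2)\le 3N+4$. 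Combining with the inequality $\dv_2(A)\le \Dv_2(A)$ from Proposition~\ref{prop:a} produces the desired sandwich $N<\dv_2(A)\le \Dv_2(A)\le 3N+4$.

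The only point left to verify is that $A$ is infinite dimensional; this is immediate since $\rnk(q_n)=2^n\to\infty$ forces $A$ to be infinite dimensional. There is no genuine obstacle in the argument: the hard work (the Euler-class obstruction giving the lower bound, and Dupr\'e's vector bundle estimate giving the upper bound) is already packaged inside Lemma~\ref{lm:simple-1}, so the theorem reduces to the behaviour of $\dv_2$ and $\Dv_2$ under unital inductive limits with compact unit.
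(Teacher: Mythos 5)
Your proposal is correct and follows essentially the same route as the paper: construct $A$ via Lemma~\ref{lm:template} with $|J_j|=N\cdot 2^{j-1}$, read off the finite-stage bounds from Lemma~\ref{lm:simple-1} (identifying $\Cu(A_n)$ with $\Cu(C((S^2)^{d_n}))$), and then use the compact-unit equality in Proposition~\ref{prop:inductive-limits} for the lower bound on $\dv_2(A)$ together with Remark~\ref{rem:image} for the upper bound $\Dv_2(A)\le \Dv_2(A_2)\le 3N+4$. This matches the paper's proof, with your added remarks on compactness and infinite-dimensionality being harmless elaborations.
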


\begin{proof} Let $A$ be the simple \Cs{} constructed in Lemma~\ref{lm:template} based on the choice of $(J_j)_{j=1}^\infty$ made in Lemma~\ref{lm:simple-1}. Then $A$ is the inductive limit of the sequence of \Cs s $A_n = q_n \big( C((S^2)^{d_n}) \otimes \cK\big) q_n$ with unital connecting mappings. It follows from Lemma~\ref{lm:simple-1} that $\dv_2(A_n) > N$ for all $n$, and that $\Dv_2(A_2) \le 3N+4$.

By Proposition~\ref{prop:inductive-limits} and Remark~\ref{rem:image}, 
$$\dv_2(A) = \inf_{n \in \N} \dv_2(A_n) > N,$$
and $\Dv_2(A) \le \Dv_2(A_2) \le 3N+4$.
\end{proof}

\begin{remark}[Initial objects] Suppose that $\cC$ is a class of unital \Cs. An element $A$ in $\cC$ is an \emph{inital object in $\cC$} if there exists a unital \sh{} $A \to B$ for every $B$ in $\cC$. 

It is well-known that the Cuntz algebra $\cO_\infty$ is an initial object in the class of unital properly infinite \Cs s. In fact, a unital \Cs{} is properly infinite if and only if it contains $\cO_\infty$ as a unital sub-\Cs. Every properly infinite unital sub-\Cs{} of $\cO_\infty$ is then also an initial object in the class of unital properly infinite \Cs s. Hence the Cuntz-Toeplitz algebras, $\cT_n$, $n \ge 2$, are initial objects and so are all unital Kirchberg algebras $A$ for which the assignment $[1_A] \mapsto 1$ extends to a homomorphism $K_0(A) \to \Z$.

It was shown in \cite{EllRor:Hausdorff} that also the class of unital \Cs s of real rank zero and of infinite rank has initial objects. One can even find initial objects to this class which are simple AF-algebras (necessarily with infinite dimensional trace simplex). It follows in particular that the class of unital simple infinite dimensional \Cs s of real rank zero has initial objects. 

Clearly, $\C$ is an initial object in the category of all unital \Cs s, and so is any unital \Cs{} that admits a character. (Note that we do not require the unital \sh{} $A \to B$ to be injective.)

The corollary below shows that initial objects do not exist in the general non-real rank zero case.
\end{remark}

\begin{corollary} The class of unital simple infinite dimensional \Cs s and the class of unital \Cs s of infinite rank do not have initial objects. In fact, there is no unital \Cs{} without characters that maps unitally into every unital simple infinite dimensional \Cs. 
\end{corollary}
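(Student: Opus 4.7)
The plan is to deduce both non-existence statements from the displayed ``In fact'' assertion, so I focus on proving that no unital \Cs{} without characters can embed unitally into every simple unital infinite dimensional \Cs. The key obstruction is provided by the weak divisibility number $\dv_2(\cdot)$, together with the family of simple AH-algebras produced in Theorem~\ref{thm:simple}.

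First, let $B$ be a unital \Cs{} that admits no characters. By Corollary~\ref{cor:char}(i) this is equivalent to $\dv_2(B)<\infty$, so we may set $M=\dv_2(B)$, a finite positive integer. Now apply Theorem~\ref{thm:simple} with $N=M$ to obtain a simple unital infinite dimensional AH-algebra $A$ satisfying
\[
\dv_2(A) \; > \; M.
\]
If $B$ mapped unitally into $A$, then by Remark~\ref{rem:image} (applied to the induced morphism on Cuntz semigroups, which sends $\langle 1_B\rangle$ to $\langle 1_A\rangle$) we would obtain $\dv_2(A)\le \dv_2(B)=M$, contradicting the displayed inequality. This proves that $B$ fails to embed unitally into some member of the class of simple unital infinite dimensional \Cs s, which is precisely the ``In fact'' statement.

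The two non-existence claims are immediate consequences. For the class of simple unital infinite dimensional \Cs s, any initial object must lie in the class and, in particular, be simple with $\rnk=\infty$; by Corollary~\ref{cor:char}(i) such a \Cs{} has no characters, and then the previous paragraph shows it cannot embed unitally into every member of the class. For the class of unital \Cs s of infinite rank, any initial object $B$ must itself have $\rnk(B)=\infty$, hence again admit no characters by Corollary~\ref{cor:char}(i); since every simple unital infinite dimensional \Cs{} belongs to this class (being simple with no non-zero finite dimensional representation), $B$ would again have to embed unitally into every such \Cs, contradicting the main claim.

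The only substantive step is the application of Theorem~\ref{thm:simple}, which supplies simple unital infinite dimensional \Cs s with arbitrarily poor weak 2-divisibility; everything else is just tracking how $\dv_2$ transforms under unital \sh s. There is no real obstacle, since Theorem~\ref{thm:simple} has already been established and the contravariant behavior of divisibility numbers under unital embeddings is Remark~\ref{rem:image}.
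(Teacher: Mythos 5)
Your proof is correct and follows essentially the same route as the paper: it combines Theorem~\ref{thm:simple} (simple unital infinite dimensional algebras with arbitrarily large $\dv_2$), the monotonicity of divisibility numbers under unital \sh s from Remark~\ref{rem:image}, and Corollary~\ref{cor:char}(i), merely phrased contrapositively (fixing the character-free algebra $B$ and choosing a bad target) rather than assuming a universal embedding and deducing $\dv_2=\infty$. The extra paragraph spelling out how the two initial-object claims follow is a harmless elaboration of what the paper leaves implicit.
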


\begin{proof} If $A$ is a unital  \Cs{} that maps unitally into every unital simple infinite dimensional \Cs, then $\dv_2(A) \ge \dv_2(B)$ for all unital simple infinite dimensional \Cs s $B$, cf.\ Remark~\ref{rem:image}, whence $\dv_2(A) = \infty$ by Theorem~\ref{thm:simple}. On the other hand, if $A$ has no character, then $\dv_2(A) < \infty$ by Corollary~\ref{cor:char}.
\end{proof}

\subsection*{The asymptotic divisibility numbers}

\noindent We can give a lower and an upper bound on the asymptotic divisibility constant (discussed in Section~\ref{sec:stable-div}) for the \Cs{} considered above:

\begin{corollary} \label{cor:simple-asymp} Let $N$ be a positive integer, and let $A$ be the simple AH-algebra constructed in Theorem~\ref{thm:simple} associated with $N$. It follows that
$$(N-1)/2 < \Dv_*(A) \le 2N+2.$$
\end{corollary}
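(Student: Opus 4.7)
The plan is to derive the lower bound immediately from Theorem~\ref{thm:simple} and Proposition~\ref{prop:almdiv}(i), and to obtain the upper bound by computing $\Dv_{2^k}(A_{k+1})$ directly at finite stages of the inductive system.

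For the lower bound, Theorem~\ref{thm:simple} gives $\dv_2(A) > N$, hence $\dv_2(A) \ge N+1$ since the divisibility constants are integer-valued. Combining $\dv_2(A) \le \Dv_2(A)$ from Proposition~\ref{prop:a} with the inequality $\Dv_2(A) \le 2\Dv_*(A) + 1$ from Proposition~\ref{prop:almdiv}(i) yields
$$\Dv_*(A) \; \ge \; \frac{\Dv_2(A) - 1}{2} \; \ge \; \frac{N}{2} \; > \; \frac{N-1}{2}.$$

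For the upper bound I will work inside the building block $A_{k+1}$ for each $k \ge 1$. Recall that $X_{k+1} = (S^2)^{d_{k+1}}$ with $d_{k+1} = N(2^{k+1}-1)$, that $\rnk(q_{k+1}) = 2^{k+1}$, and that by construction $q_{k+1}$ contains $2^k \!\cdot\! p_{J_{k+1}}$ as a direct summand. Taking $x = \langle p_{J_{k+1}} \rangle \in \Cu(A_{k+1})$, this direct summand yields $2^k x \le \langle q_{k+1} \rangle$ for free. For the reverse inequality I will invoke the same version of Dupr\'e's theorem (\cite[Proposition~1]{Dupre:vectorbundle}) used in the proof of Lemma~\ref{lm:simple-1}: $q_{k+1} \precsim M \!\cdot\! p_{J_{k+1}}$ as soon as $M - 2^{k+1} \ge (2d_{k+1}-1)/2$, so one may take $M \le (N+1)\cdot 2^{k+1} - N$. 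Hence $\Dv_{2^k}(A_{k+1}) \le (N+1)\cdot 2^{k+1} - N$.

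Since the connecting maps $\varphi_n$ are unital, Proposition~\ref{prop:inductive-limits} applied to the compact element $\langle 1_{A_1}\rangle$ gives $\Dv_{2^k}(A) \le \Dv_{2^k}(A_{k+1})$. Dividing by $2^k$ and letting $k \to \infty$, the right-hand side tends to $2N+2$, and Proposition~\ref{prop:almdiv}(ii) identifies the liminf in the definition of $\Dv_*(A)$ with the actual limit, giving $\Dv_*(A) \le 2N+2$. The only real content is the choice of $x = \langle p_{J_{k+1}} \rangle$ together with the stage $A_{k+1}$, which matches the growth $d_{k+1} \sim N\cdot 2^{k+1}$ of the fiber dimension to the rank $2^{k+1}$ of $q_{k+1}$ so that the Dupr\'e bound produces exactly the constant $2N+2$; no further obstacle is anticipated.
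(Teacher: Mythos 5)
Your proof is correct and is essentially the paper's own argument: the lower bound comes from $\Dv_2(A) > N$ together with Proposition~\ref{prop:almdiv}(i), and the upper bound is obtained exactly as in the paper by working at stage $A_n$ with $m = 2^{n-1}$ (your $k = n-1$), taking $x = \langle p_{J_n}\rangle$, using the direct summand of $q_n$ for $mx \le \langle q_n \rangle$ and the Dupr\'e rank estimate for $\langle q_n\rangle \le Mx$, and then passing to the limit along powers of~$2$. The only (harmless) deviations are your marginally sharper finite-stage constant $(N+1)2^{k+1}-N$ in place of $(2N+2)2^{k}$ and the superfluous appeal to Proposition~\ref{prop:almdiv}(ii), since the liminf defining $\Dv_*(A)$ is already bounded by the liminf along the subsequence of powers of~$2$.
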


\begin{proof} By Proposition~\ref{prop:almdiv} we get that $\Dv_*(A) \ge (\Dv_2(A)-1)/2 > (N-1)/2$. 
To prove the reserve inequality, take any positive integer $n$ and put $m=2^{n-1}$. We show that $\Dv_m(A_n) \le (2N+2)m$, where $A_n$ is as in the proof of Theorem~\ref{thm:simple}. 
In the notation of Lemma~\ref{lm:simple-1}, let $x = \langle p_{J_n} \rangle$, put $u = \langle q_n \rangle$, and recall that $q_n$ is the unit of the \Cs{} $A_n$. By the definition of $q_n$ (in Lemma~\ref{lm:simple-1}) it follows that $mx \le u$. As
$$\dim((2+2N)m \! \cdot \! p_{J_n}) - \dim(q_n) = (2N+2)m -2^n = 2^n N \ge \frac{\dim(X_n)-1}{2},$$
if follows from \cite[Proposition 1]{Dupre:vectorbundle} that $(2+2N)m \! \cdot \! p_{J_n} \precsim q_n$, whence $u \le (2N+2)mx$. This proves that $\Dv_m(A_n) \le (2N+2)m$.

It follows from Remark~\ref{rem:image} that $\Dv_m(A) \le (2N+2)m$ whenever $m$ is a power of 2, and this entails that
$$\Dv_*(A) = \liminf_{m \to \infty} \Dv_m(A)/m \le \liminf_{n \to \infty} \Dv_{2^{n-1}}(A)/2^{n-1} \le 2N+2,$$
as desired.
\end{proof}

\noindent We can use Lemma~\ref{lm:template} to construct a  unital, simple AH-algebra $A$ such that $\Dv_*(A) = \infty$. The proof requires the following sharpening of Corollary~\ref{cor:key} that may have independent interest.

\begin{corollary} \label{cor:key-m}
Let $X$ be a locally compact Hausdorff space, and let $q\in C_0(X)\otimes\mathcal K$ be a projection. Let $m$ and $N$ be positive integers, let $I_1, I_2, \dots, I_N$ be pairwise disjoint subsets of $\N$ with $|I_i| = m-1$ for all $i$, and let $(p_{I_i})_{i=1}^N$ be the associated projections in $C((S^2)^{(m-1)N})\otimes\mathcal K$ defined in Example~\ref{ex:spheres}. Suppose that
\[
 q\otimes 1 \; \nprecsim  \; q\otimes \bigoplus_{i=1}^Np_{I_i}.
\]
Then $\dv_m(\langle q \rangle, C_0(X))>N$.
\end{corollary}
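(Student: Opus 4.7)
The plan is to generalize the proof of Corollary~\ref{cor:key} in the obvious way, with the Bott line bundle on $S^2$ replaced by its tensor-product analog on $(S^2)^{m-1}$. The key input is to establish, for each $i$, that
\[
\langle 1_A \rangle \le m \, \langle p_{I_i}\rangle \quad \text{in } \Cu(A), \qquad A := C((S^2)^{(m-1)N}).
\]
Since each $p_{I_i}$ is pulled back from a rank-one projection $\tilde p$ on $(S^2)^{m-1}$, this reduces to showing $1 \precsim m \cdot \tilde p$ on $(S^2)^{m-1}$. That in turn follows from the classical fact that a complex line bundle over a CW complex of real dimension $d$ is stably dominated by $k$ copies of itself as soon as $2k-1 \ge d$ (see, e.g., \cite[Chapter~9, Proposition~1.1]{Hus:fibre}); here $d = 2(m-1)$ and $k = m$, so the estimate is tight. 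This generalizes the $m=2$ input ``$\langle 1\rangle \le 2\langle p\rangle$ in $\Cu(C(S^2))$'' that powers Corollary~\ref{cor:key}.

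Next, reduce to the unital setting by passing to the corner $B := q(C_0(X)\otimes \cK)q$, which is a unital \Cs{} with unit $q$. Since any witnesses $z_1,\dots,z_N \in \Cu(C_0(X))$ for the weak $(m,N)$-divisibility of $\langle q\rangle$ automatically satisfy $mz_j \le \langle q\rangle$, they lie in the sub-semigroup $\Cu(B) \subseteq \Cu(C_0(X))$; hence $\dv_m(\langle q\rangle, C_0(X)) = \dv_m(B)$. Similarly, viewing $B \otimes A$ as a hereditary sub-\Cs{} of $(C_0(X)\otimes\cK)\otimes A$, the hypothesis $q\otimes 1 \nprecsim q\otimes \bigoplus_{i} p_{I_i}$ translates to
\[
\langle 1_B \rangle \otimes \langle 1_A \rangle \; \nleq \; \langle 1_B \rangle \otimes \sum_{i=1}^N \langle p_{I_i}\rangle \quad \text{in } \Cu(B \otimes A).
\]

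Finally, assume for contradiction that $\dv_m(B) \le N$, and apply Lemma~\ref{lm:Z}(iii) with the unital \Cs s $A$ and $B$ and with $x := \langle 1_A\rangle$, $y_i := \langle p_{I_i}\rangle \in \Cu(A)$. The relations $x \le m y_i$ have been verified in the first step, so the lemma yields
\[
\langle 1_A\rangle \otimes \langle 1_B\rangle \; \le \; \bigl(\textstyle\sum_{i=1}^N \langle p_{I_i}\rangle\bigr) \otimes \langle 1_B\rangle
\]
in $\Cu(A \otimes B)$, directly contradicting the translated hypothesis. The main (and essentially the only non-formal) obstacle is the vector-bundle estimate $1 \precsim m \cdot \tilde p$ on $(S^2)^{m-1}$; once this is pinned down with the right constant $m$ (rather than the trivial $2^{m-1}$ one would get from iterating the $m=2$ case factor by factor), the rest of the proof is a routine upgrade of the $m=2$ argument.
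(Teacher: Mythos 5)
Your argument is correct and is essentially the paper's own proof: the paper likewise deduces the corollary by applying Lemma~\ref{lm:Z}~(iii) to $x=\langle 1\rangle$ and $y_i=\langle p_{I_i}\rangle$, the key input being $\langle 1\rangle\le m\langle p_{I_i}\rangle$, which holds exactly as you say because $p_{I_i}$ is pulled back from a line bundle over the $2(m-1)$-dimensional space $(S^2)^{m-1}$ and $2m-1\ge 2(m-1)$. Your additional bookkeeping (passing to the corner $q(C_0(X)\otimes\cK)q$ to handle non-unitality and to transport the hypothesis $q\otimes 1\nprecsim q\otimes\bigoplus_i p_{I_i}$ into $\Cu$ of the corner) just makes explicit details the paper leaves implicit, and is sound.
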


\begin{proof} Apply Lemma~\ref{lm:Z} (iii) to $x = \langle 1 \rangle$ and $y_i = \langle p_{I_i} \rangle$, and note that $x \le my_i$, cf.\ Example~\ref{ex:spheres}.
\end{proof}

\begin{lemma} \label{lm:simple-2}
Let $(J_j)_{j=1}^\infty$ be a sequence of  pairwise disjoint subsets of $\N$ with $|J_j| = 2^{2j-1} \, j$. Then, in the notation of Lemma~\ref{lm:template}, we have 
$$2^k \, k< \dv_{2^k}\big(\langle q_n \rangle, C((S^2)^{d_n}) \big) < \infty$$
if $n \ge k$, and that $\dv_{2^k}\big(\langle q_n \rangle, C((S^2)^{d_n}) \big) = \infty$ if $n < k$.
\end{lemma}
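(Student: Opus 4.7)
The plan is to split the argument according to whether $n<k$ or $n\ge k$. Let $A_n'=q_n(C(X_n)\otimes\cK)q_n$, a homogeneous \Cs{} with every fibre isomorphic to $M_{2^n}$. Since any $x\in\Cu(C(X_n))$ with $2^k x\le\langle q_n\rangle$ automatically satisfies $x\le\langle q_n\rangle$, it already lies in $\Cu(A_n')$, and hence $\dv_{2^k}(\langle q_n\rangle,C(X_n))=\dv_{2^k}(A_n')$. Both the claim $\dv_{2^k}=\infty$ for $n<k$ and the finiteness $\dv_{2^k}<\infty$ for $n\ge k$ now follow at once from Corollary~\ref{cor:finite-div}(iii), since $\rnk(A_n')=2^n\ge 2^k$ precisely when $n\ge k$.

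The real content is the strict lower bound $\dv_{2^k}(\langle q_n\rangle,C(X_n))>2^k k$ in the case $n\ge k$. I would apply Corollary~\ref{cor:key-m} with $m=2^k$ and $N=2^k k$: choose pairwise disjoint subsets $I_1,\dots,I_N\subseteq\N$ each of cardinality $m-1=2^k-1$, and let $p_{I_i}$ be the associated rank-one Bott-type projections on $(S^2)^{(m-1)N}$. It then suffices to show that
$$
q_n\otimes 1 \;\nprecsim\; q_n\otimes \bigoplus_{i=1}^N p_{I_i}
$$
in $\Cu\big(C(X_n)\otimes C((S^2)^{(m-1)N})\big)$. As in the proof of Proposition~\ref{prop:euler-obstruct}, the summand $1$ in $q_n$ gives a trivial rank-one subprojection on the left-hand side, so such a comparison would force the Euler class of the right-hand side to vanish; it therefore suffices to prove that this Euler class is non-zero.

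Expanding the right-hand side in the spirit of Lemma~\ref{lm:template}, $q_n\otimes\bigoplus_i p_{I_i}$ is a direct sum of the rank-one Bott-type projections $p_{J_\ell\cup I_i}$ indexed by $\ell\in\{0,1,\dots,n\}$ and $i\in\{1,\dots,N\}$, with multiplicities $c_0=1$ and $c_\ell=2^{\ell-1}$ for $\ell\ge 1$ (where $J_0:=\emptyset$). Its Euler class is the product of the $N\cdot 2^n$ linear forms $\sum_{j\in J_\ell\cup I_i}z_j$ in the appropriate truncated polynomial ring, and, as in the argument recalled in Example~\ref{ex:spheres} from \cite[Proposition~4.5]{Ror:simple}, is non-zero iff the multiset $\{J_\ell\cup I_i\}$ admits a system of distinct representatives. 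This combinatorial step is exactly what the calibrated sizes $|J_j|=2^{2j-1}j$ are designed to secure: for $\ell\ge k$ one has $|J_\ell|=2^{2\ell-1}\ell\ge 2^{\ell-1}\!\cdot 2^k k=Nc_\ell$, so the $Nc_\ell$ sets containing $J_\ell$ can each be assigned a distinct element of $J_\ell$; for $\ell<k$ the remaining deficit $Nc_\ell-|J_\ell|$ must be absorbed by $I_i$-elements, and a uniform distribution over $i$ shows the per-$i$ absorption at $\ell$ is at most $c_\ell$. Summing, the total $I_i$-usage per $i$ (including the one forced use for the $\ell=0$ set) is bounded by $1+\sum_{\ell=1}^{k-1}c_\ell=2^{k-1}\le 2^k-1=|I_i|$, so the matching exists and the Euler class is non-zero. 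I expect this matching verification to be the main obstacle; however, the factor-of-roughly-two slack in the $|I_i|$ budget makes the bookkeeping forgiving, and in fact allows one to recast it as a direct check of Hall's condition by noting that the worst case $L=\{0,1,\dots,k\}$ already gives $\sum_{\ell\in L,\ell\ge 1}|J_\ell|\ge 2^{2k-1}k=2^{k-1}N\ge N\cdot\big(\sum_{\ell\in L}c_\ell-2^k+1\big)$.
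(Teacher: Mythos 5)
Your argument is correct and takes essentially the same route as the paper's: the strict lower bound comes from Corollary~\ref{cor:key-m} with $m=2^k$, $N=2^k k$, reducing the non-comparison to non-vanishing of the Euler class of $\bigoplus_{j,i} 2^{\max\{0,j-1\}}\cdot p_{J_j\cup I_i}$, verified by the same matching count ($|J_j|\ge 2^{j-1}N$ for $j\ge k$, and a per-$i$ budget of $2^{k-1}\le 2^k-1$ inside $I_i$ for $j<k$), while finiteness for $n\ge k$ and the value $\infty$ for $n<k$ follow, as in the paper, from the fibre dimension $2^n$ of the corner algebra. Your closing ``worst-case Hall's condition'' aside is not fully justified, but it is dispensable since the explicit matching you construct already suffices.
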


\begin{proof} We use Corollary~\ref{cor:key-m} with $N=2^k \, k$ and $m= 2^k$ to prove the first claim. As $1 \precsim q_n$ it suffices to show that $q_n \otimes \bigoplus_{i=1}^Np_{I_i}$ has non-trivial Euler class, when $I_1, \dots, I_N$ are as in  Corollary~\ref{cor:key-m}. Write
$$q_n \otimes \bigoplus_{i=1}^Np_{I_i} \;  = \; \bigoplus_{j=0}^n \, \bigoplus_{i=1}^{N} 2^{\max\{0,j-1\}} \! \cdot \! p_{J_j} \otimes  p_ {I_i} = \bigoplus_{j=0}^n \, \bigoplus_{i=1}^{N} 2^{\max\{0,j-1\}} \! \cdot \! p_{J_j \cup I_i}.$$
As explained in Example~\ref{ex:spheres}, to prove non-triviality of the Euler class of the projection $q_n \otimes \bigoplus_{i=1}^Np_{I_i}$  one needs to verify the combinatorial fact that the family of sets $(J_j \cup I_i)$, $j=0, \dots, n$, $i=1, \dots, N$, and with the set $J_j \cup I_i$ repeated $2^{\max\{0,j-1\}}$ times, satisfies the Marriage Lemma condition. 

By first exhausting the elements in the sets $I_i$, and using that $\sum_{j=0}^{k-1} 2^{\max\{0,j-1\}} = 2^{k-1} < |I_i|$, it suffices to show that the family of sets  $(J_j)$, $j=k, \dots, n$, with each set  repeated $2^{j-1}N=2^{j+k-1}k$ times, satisfies the Marriage Lemma condition. However, this holds because $|J_j| = 2^{2j-1}j \ge 2^{j+k-1}k$ when $j \ge k$.

The second claim follows from the fact that the dimension of the projection $q_n$ is $2^n$ and that $\dv_m(\langle q_n \rangle, C((S^2)^{d_n})) = \infty$ whenever $m > \dim(q_n)$.
\end{proof}

\begin{theorem} \label{thm:simple-2} There is a simple unital infinite dimensional AH-algebra $A$ which satisfies $\Dv_*(A) = \infty$.
\end{theorem}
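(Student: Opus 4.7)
The plan is to take $A$ to be the simple unital AH-algebra produced by Lemma~\ref{lm:template} applied to the specific sequence $(J_j)_{j=1}^\infty$ of pairwise disjoint subsets of $\N$ with $|J_j| = 2^{2j-1}j$ used in Lemma~\ref{lm:simple-2}. Simplicity, unitality, and infinite-dimensionality of $A$ are built into that template, so the task reduces to estimating $\Dv_*(A)$.

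Fix $k \ge 1$ and set $m = 2^k$. By Lemma~\ref{lm:simple-2}, the building block $A_n = q_n\big(C((S^2)^{d_n}) \otimes \mathcal K\big) q_n$ satisfies $\dv_{2^k}(A_n) > 2^k k$ for every $n \ge k$, while trivially $\dv_{2^k}(A_n) = \infty$ when $n < k$. Because the connecting maps are unital and $\langle 1_{A_n}\rangle = \langle q_n\rangle$ is compact, the \emph{equality} statement of Proposition~\ref{prop:inductive-limits} (which holds precisely under compactness of the limit element) yields
\[
\dv_{2^k}(A) \;=\; \inf_{n} \dv_{2^k}(A_n) \;>\; 2^k k.
\]

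Combining this with the inequality $\dv_m \le \Dv_m$ from Proposition~\ref{prop:a}, I obtain $\Dv_{2^k}(A)/2^k > k$ for every $k \ge 1$. Proposition~\ref{prop:almdiv}(ii) asserts that $\Dv_*(A) = \lim_{m\to\infty}\Dv_m(A)/m$ exists in $[0,\infty]$; since the subsequence indexed by powers of $2$ already diverges to $\infty$, the full limit is forced to be $\infty$, giving $\Dv_*(A) = \infty$.

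The argument is essentially assembly: the genuinely nontrivial content (Euler-class computations plus the Marriage Lemma combinatorics on the chosen sets $J_j$) has already been absorbed into Lemma~\ref{lm:simple-2}. The only point requiring care is to invoke the sharper compact-element half of Proposition~\ref{prop:inductive-limits}, since the generic inductive-limit statement would only give an upper bound on $\dv_{2^k}(A)$ in terms of the $\dv_{2^k}(A_n)$, which is the wrong direction.
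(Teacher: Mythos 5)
Your proposal is correct and follows essentially the same route as the paper: take $A$ from Lemma~\ref{lm:template} with the sets $(J_j)$ of Lemma~\ref{lm:simple-2}, use the compact-element (equality) half of Proposition~\ref{prop:inductive-limits} together with $\dv_m \le \Dv_m$ to get $\Dv_{2^k}(A) > 2^k k$, and conclude via Proposition~\ref{prop:almdiv}(ii). The only difference is that you spell out explicitly the two steps (compactness of the unit for the equality in the inductive-limit proposition, and the passage from $\dv$ to $\Dv$) that the paper leaves implicit.
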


\begin{proof} Let $A$ be the simple AH-algebra contructed in Lemma~\ref{lm:template} with respect to the choice of $(J_j)$ from Lemma~\ref{lm:simple-2}. Recall that $A$ is an inductive limit of a sequence of unital \Cs s $A_1 \to A_2 \to \cdots$, where
$$A_n = q_n \big( C((S^2)^{d_n}) \otimes \cK\big) q_n.$$
It therefore follows from Lemma~\ref{lm:simple-2} that $\dv_{2^k}(A_n) > 2^k \, k$, when $n\ge k$, and $\dv_{2^k}(A_n) = \infty$ when $n < k$. This entails that $\Dv_{2^k}(A) > 2^k \, k$ by Proposition~\ref{prop:inductive-limits}.  Finally, by Proposition~\ref{prop:almdiv} (ii), 
\[
\Dv_*(A) = \limsup_{k \to \infty} \Dv_{2^k}(A)/2^k  = \infty.\qedhere
\]
\end{proof}

\noindent As remarked in Section~\ref{sec:stable-div}, if $A$ is any unital \Cs, then $\Dv_*(A) = 0$ if and only if $A$ is properly infinite, and $\Dv_*(A) \in [1,\infty]$ otherwise. Moreover, $\Dv_*(A) = 1$ if and only if $A$ is almost divisible (and not properly infinite).  In other words, the range of the invariant $\Dv_*( \, \cdot \,)$ is contained in the set $\{0\} \cup [1,\infty]$, and $\Dv_*(A) \le 1$ if and only if $A$ is almost divisible. 

We can easily produce examples of simple, unital, infinite dimensional \Cs s $A$ such that $\Dv_*(A) = 0$ (eg., $A$ could be a Cuntz algebra), or such that $\Dv_*(A) = 1$ (eg., $A$ is any simple, unital, infinite-dimensional \Cs{} of real rank zero,  cf.\ Example~\ref{ex:rr0}). The theorem above provides an example of a  simple, unital, infinite dimensional \Cs{} $A$ where $\Dv_*(A) = \infty$. 

It follows from Corollary~\ref{cor:simple-asymp} that  $\Dv_*(\, \cdot \,)$ attains infinitely many values in the interval $(1,\infty)$, when restricted to the class of unital, simple, infinite dimensional \Cs s, and that the possible values of  $\Dv_*(\, \cdot \,)$  in this interval is upwards unbounded. We do not know if all values in the interval $(1,\infty)$ are thus attained. For that matter we cannot exhibit any number in the interval $(1,\infty)$ which for sure is the value of $\Dv_*(A)$ for some simple unital infinite dimensional \Cs{} $A$. 

\subsection*{Divisibility of infinite tensor products}
\noindent
We end this section by giving yet another class of examples of simple unital \Cs s with bad divisibility properties. The ones we construct below are of the form $\bigotimes_{j=1}^\infty A_j$, where the $A_j$'s are unital simple infinite dimensional \Cs s. In particular, such \Cs s need not absorb the Jiang-Su algebra tensorially. It remains an open problem if $\bigotimes_{j=1}^\infty A$ absorbs the Jiang-Su algebra whenever $A$ is a simple unital infinite dimensional \Cs{} (or a unital \Cs{} without characters), cf.\ \cite{DadToms:Z}.

It was shown in \cite[Example 4.8]{HirRorWin:C(X)} that there exists a sequence $(A_n)$ of homogeneous \Cs s of rank two such that $\bigotimes_{n=1}^\infty A_n$ does not absorb the Jiang-Su algebra tensorially. (It is an easy conseqence of this that the Jiang-Su algebra cannot embed unitally into $\bigotimes_{n=k}^\infty A_n$ for some $k$.) Of course, one can regroup the tensor factors $A_n$ to get a new sequence $(B_n)$ of unital \Cs s each of which has infinite rank and where the Jiang-Su algebra does not embed into $\bigotimes_{n=1}^\infty B_n$. It is not known if every unital \Cs{} of infinite rank admits an embedding of a unital simple infinite dimensional \Cs. If it were true, then Theorem~\ref{thm:inf-tensor} below would follow from \cite[Example 4.8]{HirRorWin:C(X)} .

We introduce some notation to keep track of the combinatorics. Define a total order on the set $\N \times \N_0$ by 
$$(k,j) \le (\ell,i) \iff k+j < \ell + i \quad \text{or} \quad  (k+j=\ell+i \;  \text{and} \;  k < \ell).$$ 
For each $(k,j) \in \N \times \N_0$ and for each integer $m \ge k$ let $S(m;k,j)$ denote the set of all $m$-tuples $(i_1,i_2, \dots, i_m) \in \N_0^m$ such that $i_k = j$ and $(\ell,i_\ell) < (k,j)$ for all $\ell \ne k$.

\begin{lemma}  \label{lm:inf-tensor}
Let $N \ge 1$ be an integer. For each integer $k \ge 1$, let $(J_j^{(k)})_{j=1}^\infty$ be a sequence of subsets of $\N$ such that $J_j^{(k)} \cap J_i^{(\ell)} = \emptyset$ whenever $(k,j) \ne (\ell,i)$, and such that
$$|J_j^{(k)}| \; = \; \max_{m \ge k} \, \sum_{(i_1,\dots, i_m) \in S(m;k,j)} \, N \, \prod_{t=1}^m 2^{\max\{i_t-1,0\}}.
$$
(The quantity on the right-hand side is finite because $S(m;k,j)$ is finite for all $(k,j)$ and all $m \ge k$, and $S(m;k,j) = \emptyset$ when $m > k+j$.) 
Let $d_n^{(k)} \in \N$ and $q_n^{(k)} \in C((S^2)^{d_n^{(k)}}) \otimes \cK$ be as defined in Lemma~\ref{lm:template} associated with the sequence $(J_j^{(k)})_{j=1}^\infty$. It then follows that
$$\dv_2\Big(\big\langle q_n^{(1)} \otimes q_n^{(2)} \otimes \cdots \otimes q_n^{(m)} \big\rangle, C((S^2)^{d_n^{(1)}}) \otimes C((S^2)^{d_n^{(2)}}) \otimes \cdots \otimes C((S^2)^{d_n^{(m)}})\Big) > N,$$
for all positive integers $n$ and $m$.
\end{lemma}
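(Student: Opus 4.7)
The plan is to apply Proposition~\ref{prop:euler-obstruct} to the projection $Q := q_n^{(1)}\otimes\cdots\otimes q_n^{(m)}$. First I would expand the tensor product, using the conventions $J_0^{(k)} := \emptyset$ and $p_\emptyset := 1$ (so that $q_n^{(k)} = \bigoplus_{j=0}^n 2^{\max\{j-1,0\}}\cdot p_{J_j^{(k)}}$), as
\[
Q \;=\; \bigoplus_{\vec i\in\{0,1,\dots,n\}^m} R_{\vec i}\cdot p_{J(\vec i)},\qquad R_{\vec i} := \prod_{k=1}^m 2^{\max\{i_k-1,0\}},\qquad J(\vec i) := \bigcup_{k=1}^m J_{i_k}^{(k)}.
\]
The $\vec i = (0,\dots,0)$ summand equals the trivial rank-one bundle, so $Q = 1 \oplus \tilde Q$ with $\tilde Q := \bigoplus_{\vec i\neq 0} R_{\vec i}\cdot p_{J(\vec i)}$. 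Proposition~\ref{prop:euler-obstruct} then reduces the lemma to establishing the non-vanishing of $e(\tilde Q)^N$ in the cohomology of $\prod_k (S^2)^{d_n^{(k)}}$.

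Next I would compute this Euler class. Since each $p_{J(\vec i)}$ is a line bundle with $c_1(p_{J(\vec i)}) = \sum_{l \in J(\vec i)} z_l$ and $e(R\cdot L) = c_1(L)^R$ for a line bundle $L$, we get
\[
e(\tilde Q)^N \;=\; \prod_{\vec i\neq 0}\Bigl(\sum_{l\in J(\vec i)}z_l\Bigr)^{NR_{\vec i}}.
\]
Using $z_l^2 = 0$ as in Example~\ref{ex:spheres}, each factor equals $(NR_{\vec i})!\sum_{T\subseteq J(\vec i),\,|T|=NR_{\vec i}}\prod_{l\in T}z_l$, and the expansion of the product is a sum with non-negative integer coefficients of monomials indexed by choices $(T_{\vec i})_{\vec i\neq 0}$ of subsets $T_{\vec i}\subseteq J(\vec i)$ of size $NR_{\vec i}$; only collections with the $T_{\vec i}$ pairwise disjoint contribute non-zero monomials. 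Thus $e(\tilde Q)^N\neq 0$ is equivalent to Hall's marriage condition for the multi-set system $\{J(\vec i)\}_{\vec i\neq 0}$ in which each $J(\vec i)$ is listed with multiplicity $NR_{\vec i}$.

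To verify Hall's condition I would use a greedy routing argument. For each $\vec i\neq 0$ let $(k_0,j_0) := \max\{(k,i_k):i_k\geq 1\}$ under the total order from the excerpt, and route all $NR_{\vec i}$ representatives of $\vec i$ into the block $J_{j_0}^{(k_0)}\subseteq J(\vec i)$. A direct analysis using that $(l,0)<(k_0,j_0)$ iff $l<k_0+j_0$ shows that any $\vec i\in\{0,\dots,n\}^m$ with max exactly $(k_0,j_0)$ must satisfy $i_l=0$ for all $l\geq k_0+j_0$, and that the restriction of $\vec i$ to $[1,k_0+j_0-1]$ coincides with an element of $S(m';k_0,j_0)$ for $m' := \min(m,\,k_0+j_0-1)$. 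Since trailing zeros contribute only trivial factors $r(0)=1$ to $R_{\vec i}$, the total demand routed to cell $(k_0,j_0)$ equals $N\sum_{\vec i\in S(m';k_0,j_0)}R_{\vec i}$, which is at most $|J_{j_0}^{(k_0)}|$ by the defining formula $|J_j^{(k)}|=\max_{m\geq k}N\sum_{\vec i\in S(m;k,j)}R_{\vec i}$. Cell-by-cell dominance of capacity over demand gives the required system of distinct representatives, so $e(\tilde Q)^N\neq 0$ and the lemma follows from Proposition~\ref{prop:euler-obstruct}.

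The main obstacle is the combinatorial matching in the last paragraph: verifying that the $\vec i$ routed to cell $(k_0,j_0)$ correspond (after trimming zeros past position $k_0+j_0-1$) precisely to the elements of $S(m';k_0,j_0)$. This correspondence depends sensitively on the lex order on $(k+j,k)$, which is tailored so that the set $S(m;k,j)$ appearing in the definition of $|J_j^{(k)}|$ exactly matches the demand at cell $(k,j)$ under the max-cell routing.
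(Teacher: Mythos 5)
Your proposal is correct and follows essentially the same route as the paper: expand $q_n^{(1)}\otimes\cdots\otimes q_n^{(m)}$ as a trivial line plus multiples of the line bundles $p_{J(\vec i)}$, reduce via Proposition~\ref{prop:euler-obstruct} to non-vanishing of the $N$th power of the Euler class, convert that (using $z_l^2=0$; this is the content of \cite[Proposition 4.5]{Ror:simple} quoted in Example~\ref{ex:spheres}, which you reprove directly rather than cite) into a disjoint-transversal condition for the sets $J(\vec i)$ with multiplicities $NR_{\vec i}$, and verify it by routing each tuple to the block of its maximal index and invoking the defining formula for $|J_j^{(k)}|$. The one substantive difference is at the routing step, and there your version is actually the more careful one: the paper takes the maximum of $(1,i_1),\dots,(m,i_m)$ over \emph{all} coordinates and asserts $T(n,m;k,j)\subseteq S(m;k,j)$, which read literally can route positive demand to a cell $(\ell,0)$, i.e.\ into the empty set $J_0^{(\ell)}$ (for instance the tuple $(1,0,0)$ has maximum $(3,0)$); your variant --- maximum over the non-zero coordinates, the observation that then $i_\ell=0$ for all $\ell\ge k_0+j_0$, and truncation to length $m'=\min(m,k_0+j_0-1)$ so that the truncated tuple lies in $S(m';k_0,j_0)$ --- is the precise way to make this step work, and it genuinely exploits the $\max_{m\ge k}$ in the definition of $|J_j^{(k)}|$. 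Two trivial points: the demand at cell $(k_0,j_0)$ is in general only bounded above by (not equal to) $N\sum_{\vec i\in S(m';k_0,j_0)}R_{\vec i}$, since $S(m';k_0,j_0)$ carries no cap $i_t\le n$, but that is the direction you need; and for the Euler-class step only the implication (disjoint transversals $\Rightarrow$ non-vanishing) is required, which your positivity-of-coefficients argument supplies.
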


\begin{proof}  Let $T(n,m)$ be the set of all non-zero $m$-tuples  $(i_1,i_2, \dots, i_m) \in \N_0^m$ such that $i_k \le n$ for all $k=1,2, \dots, m$. Adopt the convention $J_0^{(k)} = \emptyset$ for all $k$ and let $p_\emptyset$ denote the trivial (= constant) one-dimensional projection. We can then express the projection $q_n^{(1)} \otimes q_n^{(2)} \otimes \cdots \otimes q_n^{(m)} $ as follows:
$$1 \oplus \sum_{(i_1,\dots, i_m) \in T(n,m)} \big( \prod_{t=1}^m 2^{\max\{i_t-1,0\}}\big) \cdot  p_{J_{i_1}^{(1)}} \otimes p_{J_{i_2}^{(2)}} \otimes \cdots \otimes p_{J_{i_m}^{(m)}}.$$
By Proposition~\ref{prop:euler-obstruct} it suffices to show that the Euler class of the projection 
$$\sum_{(i_1,\dots, i_m) \in T(n,m)} \big(N \, \prod_{t=1}^m 2^{\max\{i_t-1,0\}}\big) \cdot  p_{J_{i_1}^{(1)}} \otimes p_{J_{i_2}^{(2)}} \otimes \cdots \otimes p_{J_{i_m}^{(m)}}$$ 
is non-zero, or, equivalently, that the Euler class of the projection
$$\sum_{(i_1,\dots, i_m) \in T(n,m)} \big(N \, \prod_{t=1}^m 2^{\max\{i_t-1,0\}}\big) \cdot  p_{{J_{i_1}^{(1)}} \cup {J_{i_2}^{(2)}} \cup \cdots \cup {J_{i_m}^{(m)}}}$$ 
is non-zero. By  \cite[Proposition 4.5]{Ror:simple}, cf.\ Example~\ref{ex:spheres}, it suffices to show that the family of sets ${J_{i_1}^{(1)}} \cup {J_{i_2}^{(2)}} \cup \cdots \cup {J_{i_m}^{(m)}}$, where $(i_1, \dots, i_m) \in T(n,m)$ and where the set ${J_{i_1}^{(1)}} \cup {J_{i_2}^{(2)}} \cup \cdots \cup {J_{i_m}^{(m)}}$ is repeated $N \cdot \prod_{t=1}^m 2^{\max\{i_t-1,0\}}$ times, admits a matching. 

Construct a matching by selecting the matching elements for the $N \cdot \prod_{t=1}^m 2^{\max\{i_t-1,0\}}$ copies of the set ${J_{i_1}^{(1)}} \cup {J_{i_2}^{(2)}} \cup \cdots \cup {J_{i_m}^{(m)}}$ inside the subset $J_{j}^{(k)}$, where $(k,j)$ is the largest of the elements $(1,i_1), (2,i_2), \dots, (m,i_m)$. To check that this work, i.e., to see that $J_j^{(k)}$ is large enough, let $T(n,m;k,j)$ be the set of those $m$-tuples $(i_1,i_2, \dots, i_m)$ in $T(n,m)$ for which 
$$(k,j) = \max\{(1,i_1), (2,i_2), \dots, (m,i_m)\}.$$ 
Then $T(n,m;k,j) \subseteq S(m;k,j)$, and so it follows by the assumption on $|J_j^{(k)}|$ that
$$|J_j^{(k)}| \ge \sum_{(i_1, \dots, i_m) \in T(n,m;k,j)} \, N \, \prod_{t=1}^m 2^{\max\{i_t-1,0\}}.$$
The suggested matching is therefore possible. 
\end{proof}

\noindent The theorem below shows that an infinite tensor product of simple unital infinite dimensional \Cs s does not necessarily have good divisibility properties. Any such \Cs{} $A = \bigotimes_{n=1}^\infty A_n$ will have (many) non-trivial central sequences, i.e., the central sequence algebra $A_\omega \cap A'$ with respect to an ultrafilter $\omega$ on $\N$ is non-trivial. For example, $CM_m(\C)$ embeds into $A_\omega \cap A'$ for all $m$, albeit, not necessarily with full image. However, in the example contructed below, one cannot embed the Jiang-Su algebra into $A_\omega \cap A'$.

\begin{theorem} \label{thm:inf-tensor} For each integer $N > 2$, there exist a sequence $(A_n)$ of unital simple infinite dimensional \Cs s (in fact, AH-algebras) such that
$$\dv_2\Big( \bigotimes_{n=1}^\infty A_n \Big) > N. $$
In particular, $\bigotimes_{n=1}^\infty A_n$ does not absorb the Jiang-Su algebra $\cZ$, and it does not even admit a unital embedding of $\cZ$.
\end{theorem}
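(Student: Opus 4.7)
The plan is to assemble a sequence of simple unital AH-algebras whose infinite tensor product inherits the bad divisibility already witnessed, at the level of finite tensor products of building blocks, by Lemma~\ref{lm:inf-tensor}. Fix $N > 2$. For each $k \ge 1$, apply Lemma~\ref{lm:template} to the sequence of subsets $(J_j^{(k)})_{j=1}^\infty$ supplied by Lemma~\ref{lm:inf-tensor}, obtaining a simple unital AH-algebra $A_k = \varinjlim_n A_n^{(k)}$, where
$$A_n^{(k)} = q_n^{(k)}\big(C((S^2)^{d_n^{(k)}}) \otimes \cK\big)q_n^{(k)},$$
and the connecting maps $\varphi_n^{(k)}\colon A_n^{(k)} \to A_{n+1}^{(k)}$ are unital. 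Each $A_k$ is infinite dimensional, because the rank of the unit $q_n^{(k)}$ is $2^n \to \infty$, whereas any finite dimensional simple \Cs{} is a matrix algebra of fixed size.

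Next, realize the infinite tensor product $B := \bigotimes_{n=1}^\infty A_n$ as a sequential inductive limit with unital connecting maps. Put $B_n := \bigotimes_{k=1}^n A_n^{(k)}$ and define $\psi_n\colon B_n \to B_{n+1}$ by $\psi_n := \varphi_n^{(1)} \otimes \cdots \otimes \varphi_n^{(n)} \otimes \eta_{n+1}$, where $\eta_{n+1}\colon \C \to A_{n+1}^{(n+1)}$ is the unital inclusion $\lambda \mapsto \lambda \cdot 1_{A_{n+1}^{(n+1)}}$. Each $\psi_n$ is unital, and $B \cong \varinjlim_n B_n$ in the canonical way. Each $B_n$ is a corner
$$B_n \; \cong \; p_n\big(C(Y_n) \otimes \cK\big) p_n, \qquad p_n := q_n^{(1)} \otimes \cdots \otimes q_n^{(n)}, \qquad Y_n := \prod_{k=1}^n (S^2)^{d_n^{(k)}},$$
so weak divisibility of $\langle 1_{B_n}\rangle$ in $\Cu(B_n)$ coincides with that of $\langle p_n\rangle$ in $\Cu(C(Y_n))$ (both ask about the same set of Cuntz classes dominated by $\langle p_n\rangle$).

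Applying Lemma~\ref{lm:inf-tensor} with $m=n$ now yields $\dv_2(B_n) > N$ for every $n$. Since $\langle 1_B \rangle$ is the image of the compact element $\langle 1_{B_1}\rangle$ under the unital connecting maps, the equality case of Proposition~\ref{prop:inductive-limits} applies and gives
$$\dv_2(B) \; = \; \inf_{n \in \N} \dv_2(B_n) \; \ge \; N+1 \; > \; N.$$
For the final assertion, $\dv_2(\cZ) \le \Dv_2(\cZ) = 2$ (a positive element $a \in \cZ$ of trace-rank $\tfrac{1}{2}$ satisfies $2\langle a\rangle = \langle 1_\cZ\rangle$), so any unital embedding $\cZ \hookrightarrow B$ would contradict Remark~\ref{rem:image}; and $\cZ$-stability of $B$ would yield such an embedding through $\cZ \cong 1_B \otimes \cZ \hookrightarrow B \otimes \cZ \cong B$.

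The substantive combinatorial difficulty is already packaged into the growth condition on $|J_j^{(k)}|$ in Lemma~\ref{lm:inf-tensor}. At the level of the theorem itself the only delicate point is invoking the \emph{equality} in Proposition~\ref{prop:inductive-limits} (and not merely the inequality); this is what requires the connecting maps $\psi_n$ to be unital, so that the unit class $\langle 1_B\rangle$ is a compact element arising from the compact class $\langle 1_{B_1}\rangle$.
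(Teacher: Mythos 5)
Your argument is essentially the paper's: the same building blocks from Lemma~\ref{lm:template} with the sets $(J_j^{(k)})$ of Lemma~\ref{lm:inf-tensor}, the same use of the compact (equality) case of Proposition~\ref{prop:inductive-limits}, and the same obstruction via Remark~\ref{rem:image}. The only organizational difference is that you pass to a single diagonal limit $B_n=\bigotimes_{k=1}^n A_n^{(k)}$ (with the unital maps $\psi_n$), whereas the paper runs Proposition~\ref{prop:inductive-limits} twice, first over the number of tensor factors and then over the building-block stage; both reductions land on exactly the statement of Lemma~\ref{lm:inf-tensor} (you at $m=n$, the paper for all $m,n$), and your identification $B\cong\varinjlim_n B_n$ is the standard interleaving fact, so this is a cosmetic rather than substantive difference.

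One small correction in your final step: it is not true that $\Dv_2(\cZ)=2$, and there is no positive $a\in\cZ$ with $2\langle a\rangle=\langle 1_\cZ\rangle$ --- a sum of two copies of a non-compact (``soft'') element is never the compact class $\langle 1_\cZ\rangle$, and $[1_\cZ]$ is not $2$-divisible in $K_0(\cZ)\cong\Z$. The correct value is $\Dv_2(\cZ)=3$ (e.g.\ via a unital copy of $Z_{2,3}$ in $\cZ$ and $\Dv_2(Z_{2,3})=3$, as cited in the paper), whence $\dv_2(\cZ)\le 3$. This is precisely why the theorem assumes $N>2$: then $\dv_2\bigl(\bigotimes_n A_n\bigr)>N\ge 3\ge\dv_2(\cZ)$, so Remark~\ref{rem:image} rules out a unital embedding of $\cZ$, and your deduction of non-$\cZ$-absorption from this is fine.
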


\begin{proof} Let $A_k$ be the simple unital AH-algebra constructed in Lemma~\ref{lm:template} associated with the sequence $\big(J_j^{(k)}\big)_{j=1}^\infty$ from Lemma~\ref{lm:inf-tensor}. Then $A_k$ is an inductive limit of a sequence, $A_k(1) \to A_k(2) \to \cdots$, of unital homogenous \Cs s with unital connecting maps, where
$$A_k(n) = q_n^{(k)} \Big( C((S^2)^{d_n^{(k)}}) \otimes \cK \Big) q_n^{(k)}.$$

The infinite tensor product $ \bigotimes_{n=1}^\infty A_n$ is the inductive limit of the sequence
$$A_1 \to A_1 \otimes A_2 \to A_1 \otimes A_2 \otimes A_3 \to \cdots$$
with unital connecting maps. It therefore suffices to show that
$\dv_2\big( \bigotimes_{k=1}^m A_k\big) > N$
for every $m$, cf.\ Proposition~\ref{prop:inductive-limits}. Now, $\bigotimes_{k=1}^m A_k$ is the inductive limit of the sequence 
$$\bigotimes_{k=1}^m A_k(1) \to \bigotimes_{k=1}^m A_k(2) \to \bigotimes_{k=1}^m A_k(3)  \to \cdots,$$
with unital connecting mappings, and so, again by Proposition~\ref{prop:inductive-limits}, it suffices to show that
$$\dv_2\Big( \bigotimes_{k=1}^m A_k(n)\Big) > N$$
for every $m$ and $n$. The latter is precisely the content of Lemma~\ref{lm:inf-tensor}. 
\end{proof}

\section{Ultrapowers} \label{sec:ultrapowers}

\noindent In this section we show that our divisibility properties behave well with respect to taking direct products and ultrapowers of sequences of unital \Cs s. This has the surprising consequence that such products and ultrapowers may admit characters even if all the \Cs s in the ingoing sequence are unital, simple and infinite dimensional. 

We define the notion of ``almost characters" and show that the existence of such is related to the invariant $\dv_2( \, \cdot \,)$. It follows in particular that simple unital infinite dimensional \Cs s can have almost characters. 

First we need some technical lemmas:

\begin{lemma} \label{lm:limit}
Let $A$ be a unital \Cs{} and  let $(I_\lambda)$ be an upward directed family  of ideals
of $A$. Set $\overline{\bigcup I_\lambda}=I$. It follows that
$$
\Dv_m(A/I)=\inf_\lambda \Dv_m(A/I_\lambda), \quad 
\ddv_m(A/I)=\inf_\lambda \ddv_m(A/I_\lambda) $$
$$
\dv_m(A/I)=\inf_\lambda \dv_m(A/I_\lambda)
$$
for all positive integers $m$. 
\end{lemma}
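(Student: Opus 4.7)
Plan.  The equality in each case splits into two inequalities.  The inequality
$$\Dv_m(A/I) \le \inf_\lambda \Dv_m(A/I_\lambda),$$
together with its analogues for $\ddv_m$ and $\dv_m$, is immediate from Remark~\ref{rem:image} applied to the canonical unital quotient maps $A/I_\lambda \twoheadrightarrow A/I$ (which exist because $I_\lambda \subseteq I$).  So the real work lies in the reverse inequality: assuming the divisibility holds in $A/I$ with some parameter $n$, produce a $\lambda$ for which it already holds in $A/I_\lambda$ with the same $n$.

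I first handle $\Dv_m$.  Suppose $mx \le \langle 1_{A/I}\rangle \le nx$ for some $x = \langle \bar a \rangle$ with $\bar a$ a positive contraction.  By Lemma~\ref{lm:cone-comp} applied with $b = 1_{A/I}$, for each sufficiently small $\ep > 0$ there is a \sh{} $\bar\varphi\colon CM_m(\C)\to A/I$ with $\langle \bar\varphi(e_{11}\otimes\iota)\rangle = \langle (\bar a - \ep)_+\rangle$; and by compactness of $\langle 1_{A/I}\rangle$ one may arrange $\langle 1_{A/I}\rangle \le n\langle \bar\varphi(e_{11}\otimes\iota)\rangle$ as well.  Lemma~\ref{lm:comp}(i) then yields $\bar e_1, \dots, \bar e_n \in A/I$ with
$$1_{A/I} = \sum_{i=1}^n \bar e_i\, \bar\varphi(e_{11}\otimes\iota)\, \bar e_i^*.$$
Since the cone $CM_m(\C)$ is projective, lift $\bar\varphi$ to a \sh{} $\varphi\colon CM_m(\C)\to A$, and lift each $\bar e_i$ arbitrarily to some $e_i \in A$.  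Setting $b := \varphi(e_{11}\otimes\iota)$, the element $d := 1_A - \sum_{i=1}^n e_i b e_i^*$ lies in $I$.

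The next step uses the directedness of $(I_\lambda)$ in an essential way.  Because $I$ is the norm closure of $\bigcup_\lambda I_\lambda$, for any prescribed $\delta > 0$ there exist $\lambda$ and $z \in I_\lambda$ with $\|d - z\| < \delta$.  Writing $\pi_\lambda\colon A \to A/I_\lambda$ for the quotient, this gives
$$\Bigl\| 1_{A/I_\lambda} - \sum_{i=1}^n \pi_\lambda(e_i)\, \pi_\lambda(b)\, \pi_\lambda(e_i^*) \Bigr\| < \delta.$$
For $\delta$ sufficiently small, a standard $(\,\cdot\,-\ep)_+$ perturbation together with Lemma~\ref{lm:comp}(i) converts this approximate identity into the Cuntz inequality $\langle 1_{A/I_\lambda}\rangle \le n\langle \pi_\lambda(b)\rangle$.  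On the other hand, $\pi_\lambda\circ\varphi\colon CM_m(\C)\to A/I_\lambda$ and Lemma~\ref{lm:cone-comp} give $m\langle \pi_\lambda(b)\rangle \le \langle 1_{A/I_\lambda}\rangle$.  Hence $\Dv_m(A/I_\lambda)\le n$, as required.

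The arguments for $\ddv_m$ and $\dv_m$ follow the same template.  For $\ddv_m$, replace the single \sh{} $\bar\varphi$ by $m$ pairwise orthogonal full positive elements $\bar b_1,\dots,\bar b_m \in A/I$ furnished by Corollary~\ref{cor:finite-div}(ii); fullness encodes finitely many equations of the form $1_{A/I} = \sum_{i} \bar e_{i,j}\, \bar b_j\, \bar e_{i,j}^*$.  Lift the $\bar b_j$ to pairwise orthogonal positive elements $b_j \in A$ by functional calculus (say, cutting down lifts of $\bar b_j$ by mutually disjoint intervals), lift the $\bar e_{i,j}$, and proceed as above.  For $\dv_m$, use Lemma~\ref{lm:cone-comp} to convert the witnesses $x_1,\dots,x_n$ into \sh s $\bar\psi_1,\dots,\bar\psi_n\colon CM_m(\C)\to A/I$, lift each by projectivity, and translate the domination $\langle 1_{A/I}\rangle \le \sum_j \langle \bar\psi_j(e_{11}\otimes\iota)\rangle$ into a single equation of the form $1_{A/I} = \sum_j \bar e_j\, \bar\psi_j(e_{11}\otimes\iota)\, \bar e_j^*$.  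The main obstacle in all three cases is the final perturbation step, where a norm-small error in an identity such as $1_A \approx \sum e_i b e_i^*$ must be converted into an exact Cuntz inequality inside $A/I_\lambda$; this is routine once one cuts down carefully with $(\,\cdot\,-\ep)_+$, but it requires bookkeeping of several intertwined tolerances.
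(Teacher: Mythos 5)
Your proposal is correct in substance, but it takes a genuinely different route from the paper's. The paper proves the non-trivial inequality only for $\ddv_m$ (declaring the other two cases similar) and works entirely inside the Cuntz semigroup: it lifts the positive elements representing the witnesses to $A\otimes\cK$ and invokes \cite[Lemma 4.12]{KirRor:pi}, which lifts the Cuntz inequalities from $\Cu(A/I)$ to $\Cu(A)$ at the cost of error classes $\langle c\rangle$, $\langle c_j'\rangle$ with $c,c_j'\in I\otimes\cK$; a $(\,\cdot\,-\delta)_+$ cut-down then places these finitely many error elements inside a single $I_\lambda$ (this is where directedness enters), and the inequalities pass to $\Cu(A/I_\lambda)$. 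You instead realize the witnesses as exact algebraic identities in $A/I$ (via Lemmas \ref{lm:comp} and \ref{lm:cone-comp}), lift the cone \sh{} by projectivity of $CM_m(\C)$ and the identity $1=\sum_i \bar e_i\,\bar\varphi(e_{11}\otimes\iota)\,\bar e_i^*$ approximately, and then perturb modulo $I_\lambda$; this is close in spirit to Lemma \ref{lm:x} (which the paper uses for Proposition \ref{prop:ultrapower}) and is perfectly valid, at the price of needing projectivity of the cone and, in the $\ddv_m$ case, an orthogonal-lifting argument that the paper's route avoids. Note also that in your $\Dv_m$ argument directedness is not actually needed (there is a single error element $d$); it becomes essential only when, as in the $\ddv_m$ and $\dv_m$ cases, several error elements must be absorbed into one $I_\lambda$.

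Two points in your sketches of the remaining cases need tightening. For $\ddv_m$ you appeal to Corollary \ref{cor:finite-div}(ii), i.e.\ to $m$ full pairwise orthogonal elements, and to ``finitely many equations''; this only controls finiteness of $\ddv_m(A/I_\lambda)$ and loses the constant $n$, which is the whole content of the lemma. The correct instantiation of your own template is to start from witnesses $x_1,\dots,x_m$ with $x_1+\cdots+x_m\le\langle 1\rangle\le nx_j$, realize them (after an $\ep$-cut-down, using Lemma \ref{lm:comp}(ii) and compactness of $\langle 1\rangle$) as pairwise orthogonal positive elements $\bar b_j\in A/I$ with $\langle 1\rangle\le n\langle\bar b_j\rangle$, and then use Lemma \ref{lm:comp}(i) to obtain identities with exactly $n$ terms for each $j$. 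Secondly, the orthogonal lifts $b_j\in A$ of the $\bar b_j$ should be justified by projectivity of $C_0((0,1])\otimes\C^m$ (a standard fact), not by ``cutting down lifts by mutually disjoint intervals'', which does not produce orthogonal elements when the $\bar b_j$ are not projections. With these repairs your argument preserves $n$ in all three cases and yields the lemma.
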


\begin{proof}
The inequality ``$\le$" in all three cases follows from Remark~\ref{rem:image} since we have a unital \sh{} $A/I_\lambda\to A/I$ for each $\lambda$. We prove the reverse inequality ``$\ge$" only in the case of $\ddv_m(\, \cdot \,)$; the proofs of the other two instances are similar.

Set $\ddv_m(A/I) = n$, and let us show that $\ddv_m(A/I_\lambda) \le n$ for some $\lambda$. Find $x_1 \dots, x_m$ in $\Cu(A/I)$ be such that 
$$x_1+x_2 + \cdots + x_m \le \langle 1 \rangle \le nx_j$$
for all $j$. Find positive contractions $a_1, \dots, a_m$ in $A \otimes \cK$ such that $x_j = \langle b_j \rangle$, where $b_j \in A/I \otimes \cK$ is the image of $a_j$ under the quotient mapping $A \to A/I$. 
Find $\ep > 0$ such that $\langle 1 \rangle \le n \langle (b_j-\ep)_+ \rangle$ for all $j$. It follows from \cite[Lemma 4.12]{KirRor:pi} that there are positive elements $c,c'_1, \dots, c'_m$ in $I \otimes \cK$ such that
$$\langle (a_1 -\ep/2)_+\rangle + \cdots + \langle (a_m-\ep/2)_+ \rangle \le \langle 1 \rangle + \langle c \rangle, \qquad \langle 1 \rangle \le n \langle (a_j -\ep)_+ \rangle + \langle c'_j \rangle$$
for all $j$. There is $\delta > 0$ such that
$$\langle (a_1-\ep)_+ \rangle + \cdots + \langle (a_m-\ep)_+ \rangle \le \langle 1 \rangle + \langle (c-\delta)_+ \rangle, \quad \langle 1 \rangle \le n \langle (a_j -\ep)_+ \rangle + \langle (c'_j-\delta)_+ \rangle.$$

Since $\bigcup I_\lambda$ is dense in $I$, it follows that $(c-\delta)_+$ and $(c'_j-\delta)_+$ all belong to $I_\lambda \otimes \cK$ for some $\lambda$. Let $z_j \in \Cu(A/I_\lambda)$ be the Cuntz class of the image of the element $(a_j-\ep)_+$ under the quotient mapping $A \to A/I_\lambda$. Then $z_1+ \dots + z_m \le \langle 1 \rangle \le nz_j$, whence $\ddv_m(A/I_\lambda) \le n$.
\end{proof}

\noindent  For each $\ep > 0$, let $h_\ep \colon \R^+ \to [0,1]$ be a continuous functions such that $h_\ep(0) = 0$ and $h_\ep(t) = 1$ when $t \ge \ep$.

\begin{lemma} \label{lm:r}
Let $A$ be a unital \Cs. Let $b_1, b_2, \dots, b_n$ be positive elements in $A$ such that $\sum_{j=1}^n \langle b_j \rangle  \ge \langle 1_A \rangle$. Then, for some $\ep > 0$, there are contractions $y_j$ in $A$ such that $$\sum_{j=1}^n y_j^*h_\ep(b_j)y_j = 1_A.$$
\end{lemma}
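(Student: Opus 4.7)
The plan is to convert the Cuntz-semigroup inequality $\sum_{j=1}^n \langle b_j \rangle \ge \langle 1_A \rangle$ into a genuine equation in $A$, at the cost of cutting each $b_j$ down to $(b_j-\ep)_+$ for some small $\ep > 0$, and then to use a functional-calculus identity to replace $(b_j-\ep)_+$ by $h_\ep(b_j)$ sandwiched between suitable contractions.

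For the first step, I would extract $\ep > 0$ from compactness. Since $1_A$ is a projection, $\langle 1_A \rangle$ is compact in $\Cu(A)$. Using $\langle b_j \rangle = \sup_{\eta > 0} \langle (b_j - \eta)_+ \rangle$ together with axiom (A4), I get
$$\sum_{j=1}^n \langle b_j \rangle \; = \; \sup_{\eta > 0} \sum_{j=1}^n \langle (b_j - \eta)_+ \rangle,$$
and combining this with the hypothesis and compactness of $\langle 1_A \rangle$ produces an $\ep > 0$ with $\langle 1_A \rangle \le \sum_j \langle (b_j - \ep)_+ \rangle$. Applying Lemma~\ref{lm:comp}(i) with threshold $1/2$ then yields $d_1, \dots, d_n \in A$ with $\tfrac{1}{2} 1_A = \sum_j d_j^* (b_j - \ep)_+ d_j$; setting $c_j := \sqrt{2}\, d_j$ I obtain the exact identity
$$1_A \; = \; \sum_{j=1}^n c_j^* (b_j - \ep)_+ c_j \quad \text{in } A.$$

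For the second step, I would define $y_j := (b_j - \ep)_+^{1/2} c_j$. The factorization $(b_j - \ep)_+^{1/2} h_\ep(b_j) (b_j - \ep)_+^{1/2} = (b_j - \ep)_+$ holds by functional calculus, since $h_\ep = 1$ on $[\ep, \infty)$ and $(b_j - \ep)_+^{1/2}$ is supported there. Therefore $y_j^* h_\ep(b_j) y_j = c_j^* (b_j - \ep)_+ c_j$, and summing yields $\sum_j y_j^* h_\ep(b_j) y_j = 1_A$. Contractivity is automatic from the identity itself: $y_j^* y_j = c_j^* (b_j - \ep)_+ c_j$ is one of the summands above, so $y_j^* y_j \le 1_A$, i.e., $\|y_j\| \le 1$.

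The only nontrivial step is the passage from a Cuntz-semigroup inequality to an equation in $A$, which is exactly what compactness of $\langle 1_A \rangle$ together with Lemma~\ref{lm:comp}(i) is designed for. The rest is a well-chosen factorization: the single formula $y_j = (b_j - \ep)_+^{1/2} c_j$ simultaneously produces the desired factorization through $h_\ep(b_j)$ and automatically enforces the contraction estimate.
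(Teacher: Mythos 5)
Your proof is correct and follows essentially the same route as the paper: both arguments reduce to an exact identity $\sum_j c_j^*(b_j-\ep)_+c_j = 1_A$, set $y_j = (b_j-\ep)_+^{1/2}c_j$, and use $h_\ep(b_j)(b_j-\ep)_+^{1/2} = (b_j-\ep)_+^{1/2}$ together with the observation that each $y_j^*y_j$ is a summand of $1_A$ to get contractivity. The only (harmless) difference is bookkeeping: the paper first obtains $\sum_j v_j^*b_jv_j = 1_A$ and then passes to $(b_j-\ep)_+$ by an invertibility perturbation, whereas you extract $\ep$ already at the level of $\Cu(A)$ from compactness of $\langle 1_A\rangle$ and the identity $(1_A-\tfrac12)_+ = \tfrac12 1_A$.
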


\begin{proof} By assumption, and by compactness of $\langle 1_A \rangle$, there are elements $v_j \in A$ such that $\sum_{j=1}^n v_j^*b_{j}v_j = 1_A$. Thus $\sum_{j=1}^n v_j^*(b_{j}-\ep)_+v_j$ is invertible for some $\ep > 0$, and so there are elements $w_j \in A$ such that $\sum_{j=1}^n w_j^*(b_{j}-\ep)_+w_j = 1_A$.   Put $y_j = (b_{j}-\ep)_+^{1/2}w_j$ and 
notice that $h_\ep (b_{j})(b_{j}-\ep)_+ = (b_{j}-\ep)_+$ for all $j$. Thus
$$\sum_{j=1}^n y_j^*h_\ep(b_{j})y_j = \sum_{j=1}^n y_j^*y_j = 1_A,$$
which shows that the $y_j$'s are contractions with the desired properties. 
\end{proof}

\begin{lemma} \label{lm:x}
Let $A$ be a unital \Cs, and let $m,n$ be positive integers. 
\begin{enumerate}
\item $A$ is weakly $(m,n)$-divisible if and only if there exist positive contractions $a_{ij}$ and contractions $y_j$ in $A$, $j=1,2, \dots, n$ and $i=1,2, \dots, m$, such that $a_{1j},a_{2j}, \dots, a_{mj}$ are pairwise equivalent and orthogonal for all $j$, and such that
$1_A = \sum_{j=1}^n y_j^*a_{1j}y_j$.
\item $A$ is $(m,n)$-decomposable if and only if there exist pairwise orthogonal positive contractions $a_i$ and contractions $y_{ij}$ in $A$, $j=1,2, \dots, n$ and $i=1,2, \dots, m$, such that 
$\sum_{j=1}^n y_{ij}^* a_i y_{ij} = 1_A$
for all $i$.
\item $A$ is $(m,n)$-divisible if and only if there exist pairwise equivalent and pairwise orthogonal positive contractions $a_i$ and contractions $y_j$ in $A$,  $j=1,2, \dots, n$ and $i=1,2, \dots, m$, such that $\sum_{j=1}^n y_{j}^* a_1y_{j} = 1_A$.
\end{enumerate}
\end{lemma}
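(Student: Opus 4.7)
The plan is to prove all three parts simultaneously, since they share the same template: one direction is a direct translation from operator equations to Cuntz semigroup inequalities, while the other requires sharpening a Cuntz inequality to an exact equation in $A$ by combining Lemma \ref{lm:comp} (ii), the compactness of $\langle 1_A \rangle$, and Lemma \ref{lm:r}.

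First I would handle the (easy) "if" direction uniformly. Given the witnesses in any of (i)--(iii), set $x_j=\langle a_{1j}\rangle$, $x_i=\langle a_i\rangle$, or $x=\langle a_1\rangle$, respectively. In each case the sum of the prescribed pairwise orthogonal positive contractions is itself a positive contraction, hence dominated by $1_A$; combined with pairwise equivalence this yields the appropriate ``$\le \langle 1_A \rangle$'' inequality. The equation $1_A=\sum y^{*}a\,y$ gives the reverse inequality $\langle 1_A\rangle \le \sum\langle a\rangle$ by Lemma~\ref{lm:comp} (i).

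For the ``only if'' direction of (i), I would start with $x_1,\dots,x_n\in\Cu(A)$ witnessing weak $(m,n)$-divisibility (using Remark~\ref{rem:divisible-unital}), and choose positive contraction representatives $b_j\in A\otimes\cK$ with $x_j=\langle b_j\rangle$. Since $\langle 1_A\rangle$ is compact and $\sup_{\delta>0}\sum_j \langle (b_j-\delta)_+\rangle = \sum_j \langle b_j\rangle \ge \langle 1_A\rangle$, we may pick $\delta>0$ such that $\langle 1_A\rangle \le \sum_j \langle (b_j-\delta)_+\rangle$. Applying Lemma~\ref{lm:comp} (ii) to each relation $m\langle (b_j-\delta/2)_+\rangle \le mx_j \le \langle 1_A\rangle$ (noting that $\overline{1_A(A\otimes\cK)1_A}=A$) produces pairwise orthogonal, pairwise equivalent positive contractions $a'_{1j},\dots,a'_{mj}\in A$ with $a'_{ij}\sim(b_j-\delta)_+$. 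Then $\sum_j\langle a'_{1j}\rangle=\sum_j\langle (b_j-\delta)_+\rangle \ge \langle 1_A\rangle$, so Lemma~\ref{lm:r} delivers some $\ep>0$ and contractions $y_j\in A$ with $1_A=\sum_j y_j^{*}h_\ep(a'_{1j})y_j$. Finally, set $a_{ij}:=h_\ep(a'_{ij})$.

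The main obstacle, and the step that demands care, is showing that the new elements $a_{ij}$ still satisfy the orthogonality and equivalence requirements. Orthogonality is immediate from $h_\ep(0)=0$ (so $h_\ep(a'_{ij})$ lies in the hereditary subalgebra generated by $a'_{ij}$, and these are pairwise orthogonal). Equivalence follows from the standard fact that if $a=xx^{*}$ and $b=x^{*}x$ then, writing $x=v|x|=vb^{1/2}$ in $A^{**}$, one has $f(a)=vf(b)v^{*}$ for every continuous $f$ with $f(0)=0$; letting $x':=vf(b)^{1/2}$ gives $x'(x')^{*}=f(a)$ and $(x')^{*}x'=f(b)$, so $f(a)\sim f(b)$. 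Applied to $a'_{ij}\sim a'_{1j}$ this yields $a_{ij}\sim a_{1j}$ for every $i$, and by construction $1_A=\sum_j y_j^{*}a_{1j}y_j$.

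For (ii), the same template applies with the sharpening in the other direction: use compactness of $\langle 1_A\rangle$ to choose $\delta>0$ so that $\langle 1_A\rangle\le n\langle (b_i-\delta)_+\rangle$ for every $i$, apply Lemma~\ref{lm:comp}~(ii) to $\sum_i x_i\le\langle 1_A\rangle$ to produce pairwise orthogonal positive contractions $a'_i\in A$ with $a'_i\sim(b_i-\delta)_+$, and then apply Lemma~\ref{lm:r} separately for each $i$ to the $n$-tuple $(a'_i,\dots,a'_i)$ to obtain $\ep_i>0$ and contractions $y_{ij}$ with $1_A=\sum_j y_{ij}^{*}h_{\ep_i}(a'_i)y_{ij}$; set $a_i:=h_{\ep_i}(a'_i)$. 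For (iii), combine both sharpenings: pick a single $\delta$ that works for both $mx\le\langle 1_A\rangle$ and $\langle 1_A\rangle\le n\langle(b-\delta)_+\rangle$, produce $a'_1,\dots,a'_m$ pairwise orthogonal and pairwise equivalent via Lemma~\ref{lm:comp}~(ii), apply Lemma~\ref{lm:r} to the $n$ copies of $a'_1$, and set $a_i:=h_\ep(a'_i)$, which preserves orthogonality and equivalence as in part (i).
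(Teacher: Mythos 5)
Your proposal is correct and follows essentially the same route as the paper's proof: in each part the "if" direction is the direct Cuntz-class computation, and the "only if" direction combines compactness of $\langle 1_A\rangle$ (your explicit $\delta$-cutdowns play the role of the paper's choice of $x_j'\ll x_j$), Lemma~\ref{lm:comp}~(ii) to produce the orthogonal (and, where needed, equivalent) positive elements, and Lemma~\ref{lm:r} followed by replacing the elements with $h_\ep$ of them. Your explicit verification that applying $h_\ep$ preserves equivalence and orthogonality is exactly the fact the paper uses implicitly when it sets $a_{ij}=h_\ep(b_{ij})$.
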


\begin{proof}
We view $A$ as being a sub-\Cs{} of $A \otimes \cK$ which is identified with the corner $1_A(A \otimes \cK) 1_A$. 

(i). ``If". Put $x_j = \langle a_{1j} \rangle = \langle a_{ij} \rangle \in \Cu(A)$. Then 
$$mx_j = \langle \sum_{i=1}^m a_{ij} \rangle \le \langle 1_A \rangle = \big\langle \sum_{j=1}^n y_j^* a_{1j}y_j \big \rangle \le \sum_{j=1}^n \langle y_j^*a_{1j}y_j\rangle \le \sum_{j=1}^n \langle a_{1j} \rangle = \sum_{j=1}^n x_j.$$

``Only if". Choose $x_j' \ll x_j$ such that $\langle 1_A \rangle \le x_1' + x_2' + \cdots + x_n'$. Use Lemma~\ref{lm:comp} (ii) to find positive contractions $b_{ij}$ in $A$, $j=1,2, \dots, n$ and $i=1,2, \dots, m$, such that $b_{1j}, b_{2j}, \dots, b_{mj}$ are pairwise orthogonal and equivalent for all $j$ and such that $x_j' \le \langle b_{ij} \rangle \le x_j$. It then follows that
$1_A \precsim b_{11} \oplus b_{12} \oplus \cdots \oplus b_{1n}$,
and so it follows from  Lemma \ref{lm:r} that there are $\ep > 0$ and contractions $y_j$ in $A$ such that $\sum_{j=1}^n y_j^*h_\ep(b_{1j})y_j =  1_A$. The contractions $y_j$ together with the positive contractions $a_{ij} = h_\ep(b_{ij})$ are then as desired.

(ii). ``If". Put $x_i = \langle a_i \rangle \in \Cu(A)$. Then 
$$x_1+ \cdots + x_m = \big\langle a_1+ \cdots + a_m \big\rangle \le \langle 1_A \rangle = \big\langle  \sum_{j=1}^n y_{ij}^*a_iy_{ij} \big\rangle \le \sum_{j=1}^n \langle y_{ij}^*a_iy_{ij} \rangle \le nx_i$$
for all $i$.

``Only if". Choose $x_i' \ll x_i$ such that $nx_i' \ge \langle 1_A \rangle$. Use Lemma~\ref{lm:comp} (ii) to find pairwise orthogonal and equivalent positive contractions $b_1, b_2, \dots, b_m$ in $A$ such that $x_i' \le \langle b_{i} \rangle \le x_i$. Proceed as in the proof of ``only if" in part (i) to obtain elements $a_i = h_\ep(b_i)$ (for a suitable $\ep > 0$) and $y_{ij}$ with the desired properties. 

(iii). ``If". Put $x = \langle a_1 \rangle = \langle a_i \rangle \in \Cu(A)$. Then
$$mx = \big\langle a_1 + a_2 + \cdots + a_m \rangle \le \langle 1_A \rangle \le \big\langle \sum_{j=1}^n y_j^*a_1y_j \big\rangle \le \sum_{j=1}^n \langle y_j^*a_1y_j \rangle \le nx.
$$

``Only if". Choose $x' \ll x$ such that $\langle 1_A \rangle \le nx'$. By Lemma~\ref{lm:comp} (ii) there are pairwise orthogonal and pairwise equivalent positive contractions $b_1,b_2, \dots,b_m$ in $A$ such that $x' \le \langle b_j \rangle \le x$. We can now follow the proof of ``only if" in part (i) to obtain elements $a_j = h_\ep(b_j)$ (for a suitable $\ep > 0$) and $y_j$ with the desired properties. 
\end{proof}

\noindent
If $(A_k)$ is a sequence of \Cs s, then we denote by $\prod_{k=1}^\infty A_k$ the \Cs{} of all bounded sequences $(a_k)$, with $a_k \in A_k$. If $\omega$ is a (free) filter on $\N$, then denote by $c_\omega(\{A_k\})$ the closed two-sided ideal in  $\prod_{k=1}^\infty A_k$ consisting of those sequences $(a_k)$ for which $\lim_\omega \|a_k\| = 0$. Finally, denote the quotient $\prod_{k=1}^\infty A_k / c_\omega(\{A_k\})$  by $\prod_\omega A_k$. 

\begin{proposition} \label{prop:ultrapower}
Let $(A_n)$ be a sequence of unital \Cs s. Then, for all integers $m \ge 2$ and for any free filer $\omega$ on $\N$ we have:
\begin{enumerate}
\item $\Dv_m\Big(\prod_{k=1}^\infty A_k\Big) = \sup_k \Dv_m(A_k), \quad \Dv_m \Big(\prod_\omega A_k \Big) = \displaystyle{\limsup_{\omega}} \; \Dv_m(A_k)$.
\item $\ddv_m\Big(\prod_{k=1}^\infty A_k\Big) = \sup_k \ddv_m(A_k), \quad
\ddv_m \Big(\prod_\omega A_k \Big) = \displaystyle{\limsup_{\omega}} \; \ddv_m(A_k)$.
\item $\dv_m\Big(\prod_{k=1}^\infty A_k\Big) = \sup_k \dv_m(A_k), \quad
\dv_m \Big(\prod_\omega A_k \Big) = \displaystyle{\limsup_{\omega}} \; \dv_m(A_k)$.
\end{enumerate}
\end{proposition}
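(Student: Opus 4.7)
The plan is to first establish the product formula by directly assembling witnesses using Lemma~\ref{lm:x}, and then derive the ultrapower formula by exhibiting $c_\omega(\{A_k\})$ as the closure of an upward directed family of ideals and invoking Lemma~\ref{lm:limit}. I will carry out the $\Dv_m$ case in detail; the arguments for $\ddv_m$ and $\dv_m$ are identical, with parts (ii) and (i) of Lemma~\ref{lm:x} replacing part (iii).

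For the product formula, the inequality $\Dv_m(\prod_k A_k)\ge \sup_k \Dv_m(A_k)$ is immediate from Remark~\ref{rem:image} applied to the unital coordinate projection $\prod_k A_k\to A_j$. For the converse I may assume $n:=\sup_k \Dv_m(A_k)<\infty$. For each $k$, Lemma~\ref{lm:x}(iii) furnishes pairwise orthogonal and pairwise equivalent positive contractions $a_1^{(k)},\dots,a_m^{(k)}\in A_k$ and contractions $y_1^{(k)},\dots,y_n^{(k)}\in A_k$ satisfying $\sum_j (y_j^{(k)})^*a_1^{(k)}y_j^{(k)}=1_{A_k}$; I also fix $s_i^{(k)}\in A_k$ with $a_i^{(k)}=s_i^{(k)}(s_i^{(k)})^*$ and $a_1^{(k)}=(s_i^{(k)})^*s_i^{(k)}$. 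All these elements have norm at most $1$, so the coordinate-wise assembled sequences lie in $\prod_k A_k$ and satisfy the same orthogonality, equivalence, and sum relations there. A second application of Lemma~\ref{lm:x}(iii) then gives $\Dv_m(\prod_k A_k)\le n$.

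For the ultrapower formula, set $I_F := \{(a_k)\in \prod_k A_k : a_k=0 \text{ for all } k\in F\}$ for each $F\in\omega$. Each $I_F$ is a closed two-sided ideal contained in $c_\omega(\{A_k\})$, and the family $(I_F)_{F\in\omega}$ is upward directed because $\omega$ is closed under finite intersection and $I_{F_1\cap F_2}\supseteq I_{F_1}\cup I_{F_2}$. Its union is dense in $c_\omega(\{A_k\})$: given $(a_k)\in c_\omega(\{A_k\})$ and $\epsilon>0$, the set $F_\epsilon:=\{k:\|a_k\|<\epsilon\}$ lies in $\omega$, and truncating $(a_k)$ to zero on $F_\epsilon$ produces an element of $I_{F_\epsilon}$ within $\epsilon$ of $(a_k)$ in norm. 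Moreover $\prod_k A_k / I_F$ is canonically identified with $\prod_{k\in F} A_k$, since two sequences are $I_F$-equivalent precisely when they agree on $F$. Combining Lemma~\ref{lm:limit} with the product formula just established now yields
\[
\Dv_m\Big(\prod_\omega A_k\Big) \; = \; \inf_{F\in\omega} \Dv_m\Big(\prod_{k\in F} A_k\Big) \; = \; \inf_{F\in\omega} \sup_{k\in F} \Dv_m(A_k) \; = \; \limsup_\omega \Dv_m(A_k).
\]

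The one technical point that needs care is the direction of the ideals: a sequence in $I_F$ must vanish \emph{on} $F$ in order for $I_F$ to lie in $c_\omega(\{A_k\})$ precisely when $F\in\omega$, and this forces the quotient $\prod_k A_k / I_F$ to remember the coordinates in $F$ rather than its complement. With this convention the sup in the formula ranges over $F$ itself, which is exactly what matches the filter limsup; beyond that the proof is bookkeeping.
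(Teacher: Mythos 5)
Your proof is correct and follows essentially the same route as the paper's: assemble the coordinate-wise witnesses provided by Lemma~\ref{lm:x} to get the product formula, then realize $c_\omega(\{A_k\})$ as the closure of the upward directed family of ideals of sequences vanishing on a set of the filter, identify the intermediate quotients with $\prod_{k\in F}A_k$, and apply Lemma~\ref{lm:limit}. The only differences are cosmetic and in your favour: you invoke the appropriate part (iii) of Lemma~\ref{lm:x} for $\Dv_m$ and record the bounded implementing elements $s_i^{(k)}$ needed to transfer pairwise equivalence to the product, and you obtain the ultrapower identity directly from the equality in Lemma~\ref{lm:limit} without the separate ``$\le$'' step.
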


\begin{proof} We only prove (i). The proofs of (ii) and (iii) are very similar.

We have unital \sh s $\prod_{k=1}^\infty A_k \to A_n$ for all $n$. Therefore the inequality ``$\ge$" holds in the first identity in (i) (and also in (ii) and (iii)), cf.\ Remark~\ref{rem:image}. 

We show next that $\Dv_m\Big(\prod_{k=1}^\infty A_k\Big) \le \sup_k \Dv_m(A_k)$. Let $n$ be a positive integer such that $\Dv_m(A_k) \le n$ for all $k$. Then, by Lemma \ref{lm:x} (i), for each $k$ we can find positive contractions $a_{ij}^{(k)}$ and contractions $y_j^{(k)}$ in $A_k$, for $j=1,2, \dots, n$ and $i=1,2, \dots, m$, such that $a_{1j}^{(k)}, a_{2j}^{(k)}, \dots, a_{mj}^{(k)}$ are pairwise orthogonal and equivalent for all $j$, and such that
$$1_{A_k} = \sum_{j=1}^n (y_j^{(k)})^*a_{1j}^{(k)} y_j^{(k)}.$$
Put
$$a_{ij} = (a_{ij}^{(k)}) \in \prod_{k=1}^\infty A_k, \qquad y_j = (y_j^{(k)}) \in \prod_{k=1}^\infty A_k.$$
Then $a_{1j},a_{2j}, \dots, a_{mj}$ are pairwise orthogonal and equivalent, and $\sum_{j=1}^n y_j^*a_{1j}y_j$ is equal to the unit of $\prod_{k=1}^\infty A_k$. By Lemma \ref{lm:x} (i), this shows that $\prod_{k=1}^\infty A_k$ is $(m,n)$-divisible, whence $\Dv_m\Big(\prod_{k=1}^\infty A_k\Big) \le n$.

To prove the second part of (i) note first that we have a natural unital (surjective) $^*$-homomorphism $\prod_{k \in I} A_k \to \prod_\omega A_k$ for each $I \in \omega$. We can therefore use Remark~\ref{rem:image}  and the first identity in (i) to conclude that
$$\Dv_m\big(\prod_\omega A_k\big) \le \Dv_m\big(\prod_{k \in I} A_k\big) = \sup_{k \in I} \Dv_m(A_k),$$
which shows that $\Dv_m \big(\prod_\omega A_k \big) \le \limsup_{\omega} \, \Dv_m(A_k)$. 

We proceed to prove the reverse inequality. For each $I \in \omega$ consider the ideal $J(I)$ in $\prod_{k=1}^\infty A_k$ consisting of those sequences $(a_k)$ for which $a_k = 0$ for all $k \in I$. Then
$$c_\omega(\{A_k\}) = \overline{\bigcup_{I \in \omega} J(I)},$$
(where $\omega$ is ordered by reverse inclusion). We can now use Lemma~\ref{lm:limit} and the first identity in (i) to conclude that
\begin{eqnarray*}
\Dv_m\big(\prod_\omega A_k\big) & =& \inf_{I \in \omega} \Dv_m\Big(\big(\prod_{k=1}^\infty A_k\big)/J(I)\Big) \; = \; \inf_{I \in \omega} \Dv_m\big(\prod_{k \in I} A_k\big) 
\\ &= & \inf_{I \in \omega} \sup_{k \in I} \Dv_m(A_k) \; = \; \limsup_\omega \Dv_m(A_k).
\end{eqnarray*}
\end{proof}

\noindent If we combine the proposition above with Corollary~\ref{cor:char} (i) we obtain:

\begin{corollary} \label{cor:dv-char}
Let $(A_k)$  be a sequence of unital \Cs s such that $\lim_{k \to \infty} \dv_2(A_k) = \infty$. Then $\prod_{k=1}^\infty A_k$ has a character, and so does $\prod_\omega A_k$ for each free filter $\omega$ on $\N$. 
\end{corollary}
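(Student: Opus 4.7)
The plan is to derive this as an immediate consequence of Proposition~\ref{prop:ultrapower}~(iii) combined with Corollary~\ref{cor:char}~(i). Recall that Corollary~\ref{cor:char}~(i) characterizes the existence of a character on a unital \Cs{} $A$ as the statement $\dv_2(A) = \infty$. Therefore the task reduces to checking that $\dv_2$ of the product (resp.\ of the quotient $\prod_\omega A_k$) is infinite.

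For the full product this is immediate: by Proposition~\ref{prop:ultrapower}~(iii),
\[
\dv_2\Big(\prod_{k=1}^\infty A_k\Big) \;=\; \sup_k \dv_2(A_k),
\]
and the hypothesis $\lim_{k \to \infty} \dv_2(A_k) = \infty$ makes this supremum equal to $\infty$. Applying Corollary~\ref{cor:char}~(i) then yields a character.

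For the ultraproduct, the second half of Proposition~\ref{prop:ultrapower}~(iii) gives
\[
\dv_2\Big(\prod_\omega A_k\Big) \;=\; \limsup_\omega \, \dv_2(A_k).
\]
The only small observation to make is that for a free filter $\omega$ on $\N$ (which by definition contains the Fréchet filter of cofinite subsets), the hypothesis $\lim_{k \to \infty} \dv_2(A_k) = \infty$ forces this $\limsup_\omega$ to be $\infty$. Indeed, for every integer $M$ the set $\{k : \dv_2(A_k) \ge M\}$ is cofinite and hence belongs to $\omega$; so every $I \in \omega$ meets it, giving $\sup_{k \in I} \dv_2(A_k) \ge M$ for all $M$, and thus $\inf_{I \in \omega} \sup_{k \in I} \dv_2(A_k) = \infty$. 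Again Corollary~\ref{cor:char}~(i) delivers a character on $\prod_\omega A_k$.

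There is no real obstacle here; all the substantive work lies in Proposition~\ref{prop:ultrapower}~(iii) and in Corollary~\ref{cor:char}~(i), both of which have already been established. The proof is just the observation that the divergence of $\dv_2(A_k)$ survives the passage to products and ultraproducts.
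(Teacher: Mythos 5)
Your proof is correct and is essentially the paper's own argument: the corollary is stated there as an immediate combination of Proposition~\ref{prop:ultrapower} with Corollary~\ref{cor:char}~(i). Your additional check that a free filter contains every cofinite subset of $\N$, so that $\limsup_\omega \dv_2(A_k)=\inf_{I\in\omega}\sup_{k\in I}\dv_2(A_k)=\infty$, correctly fills in the one detail the paper leaves implicit.
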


\noindent If we combine the corollary above with Theorem~\ref{thm:simple}, then we obtain the following surprising fact:

\begin{corollary} \label{cor:simple-char} There is a sequence $(A_k)$ of unital simple infinite dimensional \Cs s such that $\prod_{k=1}^\infty A_k$ and $\prod_\omega A_k$ have characters for each free filter $\omega$ on $\N$.
\end{corollary}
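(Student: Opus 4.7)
The plan is to combine the two ingredients sitting right before the statement: Theorem~\ref{thm:simple} produces, for each positive integer $N$, a unital simple infinite dimensional AH-algebra with $\dv_2 > N$, while Corollary~\ref{cor:dv-char} says that a sequence of unital \Cs s whose $\dv_2$'s tend to infinity yields a direct product (and any free-filter quotient) with a character. So the construction simply diagonalises.

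Concretely, for each integer $k \ge 3$, invoke Theorem~\ref{thm:simple} with $N = k$ to obtain a unital simple infinite dimensional AH-algebra $A_k$ with $\dv_2(A_k) > k$. (For $k=1,2$ take any unital simple infinite dimensional \Cs, e.g.\ the Jiang-Su algebra.) Then $\dv_2(A_k) \to \infty$ as $k \to \infty$, and in particular $\limsup_\omega \dv_2(A_k) = \infty$ for every free filter $\omega$ on $\N$.

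By Proposition~\ref{prop:ultrapower}~(iii) we then have
\[
\dv_2\Big(\prod_{k=1}^\infty A_k\Big) \;=\; \sup_k \dv_2(A_k) \;=\; \infty, \qquad \dv_2\Big(\prod_\omega A_k\Big) \;=\; \limsup_\omega \dv_2(A_k) \;=\; \infty,
\]
and Corollary~\ref{cor:char}~(i) then yields characters on both \Cs s. Alternatively, one may simply quote Corollary~\ref{cor:dv-char} directly.

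There is no real obstacle — the whole content is in Theorem~\ref{thm:simple} and Proposition~\ref{prop:ultrapower}, both of which are already proved. The only thing to note is the mild point that ``having a character'' is not destroyed by passing to quotients, so once $\prod_{k=1}^\infty A_k$ has a character, the same follows for $\prod_\omega A_k$ via the quotient map; this is however subsumed by the direct computation of $\dv_2$ on the ultrapower above, so no separate argument is required.
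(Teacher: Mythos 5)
Your proposal is correct and follows exactly the paper's route: take $A_k$ from Theorem~\ref{thm:simple} with $N=k$ so that $\dv_2(A_k)\to\infty$, and then apply Corollary~\ref{cor:dv-char} (equivalently, Proposition~\ref{prop:ultrapower} plus Corollary~\ref{cor:char}). The extra remarks about $k=1,2$ and quotients are harmless but unnecessary.
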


\noindent Clearly, none of the \Cs s $A_k$ in the corollary above can have a character. However, they have "almost characters" in the sense defined below. This is one way of understanding how the product \Cs{} can have a character when none of the individual \Cs s has one. 

\begin{definition} \label{def:N-ep-char}
Let $N \ge 2$ be an integer and let $\ep > 0$. A unital \Cs{} $A$ is said to have \emph{$(N,\ep)$-characters} if for every $N$-tuple $u_1,u_2, \dots, u_N$ of unitaries in $A$ there exists a state $\rho$ on $A$ such that $|\rho(u_j)| \ge 1-\ep$ for $j=1,2, \dots, N$.
\end{definition}

\noindent A state $\rho$ on a unital \Cs{} is a character if and only if $|\rho(u)| = 1$ for all unitary elements $u \in A$. Most simple \Cs s that we know of do not have $(2,\ep)$-characters for small $\ep>0$. For example, if $A$ is a \Cs{} which contains unitaries $u,v$ such that $\|uvu^*v^*- \lambda 1_A\| < \eta$ for some $\lambda \in \T$ and for some $\eta < |1-\lambda|$, then $A$ does not admit any $(2,\ep)$-character for some small enough $\ep > 0$. Indeed, if $\rho$ is a state on $A$ such that $|\rho(u)|$  and $|\rho(v)|$ are close to $1$, then $\rho(uvu^*v^*)$ is close to $1$.

\begin{proposition} \label{prop:n-ep-char}
A unital \Cs{} has a character if and only if it has $(N,\ep)$-characters for all pairs $(N,\ep)$, where $N \ge 2$ is an integer and $\ep > 0$.
\end{proposition}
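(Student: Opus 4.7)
The ``only if'' direction is immediate: a character $\chi$ is a state satisfying $|\chi(u)| = 1$ for every unitary $u$, so it witnesses the $(N,\ep)$-character condition for every pair $(N,\ep)$.

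For the converse, assume $A$ has $(N,\ep)$-characters for every pair. The plan is to construct, via a weak-$*$ compactness argument, a state $\rho$ on $A$ such that $|\rho(u)| = 1$ for every unitary $u$, and then to deduce that $\rho$ must be multiplicative. Let $\cU(A)$ be the unitary group of $A$, and consider the directed set $\Lambda$ consisting of pairs $\lambda = (F,\ep)$ where $F \subseteq \cU(A)$ is finite and $\ep > 0$, ordered by $(F,\ep) \le (F',\ep')$ iff $F \subseteq F'$ and $\ep' \le \ep$. For each $\lambda = (F,\ep) \in \Lambda$, apply the hypothesis with $N = \max(2,|F|)$ to obtain a state $\rho_\lambda$ on $A$ such that $|\rho_\lambda(u)| \ge 1-\ep$ for every $u \in F$. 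By weak-$*$ compactness of the state space of $A$, some subnet of $(\rho_\lambda)$ converges to a state $\rho$, and by construction $|\rho(u)| = 1$ for every $u \in \cU(A)$.

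It remains to show that any state $\rho$ with $|\rho(u)|=1$ for all $u \in \cU(A)$ is a character. Let $(\pi_\rho, H_\rho, \xi_\rho)$ be the GNS representation. For each unitary $u$, the vector $\pi_\rho(u)\xi_\rho$ is a unit vector (since $\pi_\rho(u)$ is unitary and $\|\xi_\rho\| = 1$), and
\[
|\langle \pi_\rho(u)\xi_\rho, \xi_\rho\rangle| \; = \; |\rho(u)| \; = \; 1.
\]
The equality case of the Cauchy--Schwarz inequality then forces $\pi_\rho(u)\xi_\rho = \rho(u)\xi_\rho$. Since every element of the unital \Cs{} $A$ is a finite linear combination of unitaries, it follows that $\pi_\rho(a)\xi_\rho = \rho(a)\xi_\rho$ for every $a \in A$. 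Consequently,
\[
\rho(ab) \; = \; \langle \pi_\rho(a)\pi_\rho(b)\xi_\rho, \xi_\rho\rangle \; = \; \rho(b)\,\langle \pi_\rho(a)\xi_\rho, \xi_\rho\rangle \; = \; \rho(a)\rho(b),
\]
so $\rho$ is multiplicative, i.e., a character.

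The main subtlety is the construction of the net and the verification that the weak-$*$ limit inherits the property $|\rho(u)|=1$ at \emph{every} unitary simultaneously, which is why the directed set $\Lambda$ must be chosen so that each fixed $u \in \cU(A)$ is eventually included in $F$ and the tolerance $\ep$ eventually shrinks to $0$; once that is in place, the Cauchy--Schwarz / GNS step is routine.
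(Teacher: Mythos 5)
Your proof is correct and follows essentially the same route as the paper: the paper obtains the limiting state from the finite intersection property of the weak-$*$ closed sets $S(F,\ep)$ in the compact state space, which is exactly your net/subnet compactness argument in different clothing. The only difference is that you also prove, via GNS and the equality case of Cauchy--Schwarz, the standard fact that a state $\rho$ with $|\rho(u)|=1$ on all unitaries is a character, which the paper simply states without proof just before the proposition.
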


\begin{proof} The ``only if" part is trivial. Assume that $A$ is a unital \Cs{} that has $(N,\ep)$-characters for all pairs $(N,\ep)$. For each finite subset $F$ of the unitary group of $A$ and for each $\ep > 0$, let $S(F,\ep)$ denote the set of states $\rho$ on $A$ such that $|\rho(u)| \ge 1-\ep$ for all $u \in F$. Then, by assumption, $S(F,\ep)$ is non-empty. It follows that $\bigcap_{(F,\ep)} S(F,\ep)$ is non-empty, and any state in this intersection is a character.
\end{proof}

\begin{proposition} \label{prop:prod-char}
Let $(A_k)$ be a sequence of unital \Cs s, and let $\omega$ be a free ultrafilter on $\N$. Then $\prod_\omega A_k$ has a character if and only if for each integer $N \ge 2$ and for each $\ep > 0$ there exists $I \in \omega$ such that $A_k$ has $(N,\ep)$-characters for each $k \in I$.
\end{proposition}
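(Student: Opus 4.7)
The plan is to route the argument through Proposition~\ref{prop:n-ep-char}: having a character is equivalent to having $(N,\ep)$-characters for every integer $N\ge 2$ and every $\ep>0$, so the statement reduces to showing that the ultrapower has $(N,\ep)$-characters for all $(N,\ep)$ precisely when, for every $(N,\ep)$, the set $I(N,\ep):=\{k : A_k\text{ has }(N,\ep)\text{-characters}\}$ belongs to $\omega$.

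For the easy direction ``$\Leftarrow$'', I would fix $N$ and $\ep$ and unitaries $u_1,\dots,u_N$ in the ultrapower, and lift them to sequences of genuine unitaries $(u_j^{(k)})_k$ in the $A_k$'s. Any bounded lift $(v_j^{(k)})$ with $\|v_j^{(k)}\|\le 1$ satisfies $v_j^{(k)*}v_j^{(k)}\to 1_{A_k}$ and $v_j^{(k)}v_j^{(k)*}\to 1_{A_k}$ along $\omega$, so for $k$ in an $\omega$-set the polar modification $u_j^{(k)}=v_j^{(k)}(v_j^{(k)*}v_j^{(k)})^{-1/2}$ is a genuine unitary close to $v_j^{(k)}$ and represents $u_j$; set $u_j^{(k)}=1_{A_k}$ off that set. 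For $k\in I(N,\ep)$ pick a state $\sigma_k$ on $A_k$ with $|\sigma_k(u_j^{(k)})|\ge 1-\ep$ for all $j$, and any state elsewhere. Then $\rho((a_k)):=\lim_\omega\sigma_k(a_k)$ vanishes on $c_\omega(\{A_k\})$, hence descends to a state on $\prod_\omega A_k$ with $|\rho(u_j)|\ge 1-\ep$ for each $j$.

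For the harder implication ``$\Rightarrow$'', I would argue by contradiction. If the conclusion fails for some $(N,\ep)$, then because $\omega$ is an ultrafilter the set $J:=\{k : A_k\text{ fails the }(N,\ep)\text{-character property}\}$ belongs to $\omega$. For each $k\in J$ choose witnessing unitaries $u_1^{(k)},\dots,u_N^{(k)}$, take $u_j^{(k)}=1_{A_k}$ for $k\notin J$, and let $u_j=[(u_j^{(k)})]\in\prod_\omega A_k$. Fix a character $\chi$ on the ultrapower and set $\lambda_j=\chi(u_j)$, which lies in $\T$ since $\chi$ is a multiplicative state. Define the positive elements $b_j^{(k)}=2-\bar\lambda_j u_j^{(k)}-\lambda_j(u_j^{(k)})^*\ge 0$ in $A_k$, and $b_j:=[(b_j^{(k)})]\in\prod_\omega A_k$; then $\chi(b_j)=2-\bar\lambda_j\lambda_j-\lambda_j\bar\lambda_j=0$.

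The crux is this: for $k\in J$ and any state $\sigma$ on $A_k$, failure of the $(N,\ep)$-character property forces some $j$ (depending on $\sigma$) to satisfy $|\sigma(u_j^{(k)})|<1-\ep$, hence $\sigma(b_j^{(k)})\ge 2(1-|\sigma(u_j^{(k)})|)>2\ep$, so $\sigma\big(\sum_j b_j^{(k)}\big)>2\ep$ for every state $\sigma$. Since $\inf_\sigma\sigma(c)=\min\spek(c)$ for self-adjoint $c$, this forces $\sum_j b_j^{(k)}\ge 2\ep\cdot 1_{A_k}$ in $A_k$ for every $k\in J$. Because $J\in\omega$, the inequality passes to the ultrapower (after harmlessly replacing $\sum_j b_j^{(k)}$ by $2\ep\cdot 1_{A_k}$ for $k\notin J$), yielding $\sum_j b_j\ge 2\ep\cdot 1$ in $\prod_\omega A_k$, whence $2\ep\le\chi\big(\sum_j b_j\big)=\sum_j\chi(b_j)=0$, the desired contradiction. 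The main obstacle is precisely this passage from a uniform separation by all states on each individual $A_k$ to a single positivity inequality in the ultrapower; by comparison the unitary lifting needed for the easy direction is a routine polar-decomposition exercise.
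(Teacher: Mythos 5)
Your proof is correct, and while the easy direction matches the paper's argument, your proof of the hard direction takes a genuinely different route. For the ``if'' direction you do exactly what the paper does (lift the unitaries coordinatewise, pick states on $A_k$ for $k$ in the given set of $\omega$, and take the $\omega$-limit state), except that you spell out the polar-decomposition step needed to make the lifts genuine unitaries, which the paper glosses over. For the ``only if'' direction the paper fixes $(N,\ep)$, chooses witnessing unitaries $u_j^{(k)}$ on the bad set $J$, and then invokes Kirchberg's result \cite[Lemma 2.5]{Kir:Abel} to represent the character, restricted to the separable sub-\Cs{} generated by the images of the $u_j$, as $\rho(\pi_\omega(x))=\lim_\omega \rho_k(x_k)$ for a sequence of pure states $\rho_k$ on the $A_k$; since $|\rho(\pi_\omega(u_j))|=1$, some $I\in\omega$ satisfies $|\rho_k(u_j^{(k)})|\ge 1-\ep$ for all $k\in I$ and all $j$, forcing $I\cap J=\emptyset$. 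You avoid this external lemma entirely: using that $\omega$ is an ultrafilter to get $J\in\omega$ from the negated conclusion, you form from $\lambda_j=\chi(u_j)\in\T$ the positive elements $b_j^{(k)}=(1-\bar\lambda_j u_j^{(k)})^*(1-\bar\lambda_j u_j^{(k)})$, note that failure of $(N,\ep)$-characters makes every state of $A_k$ take value strictly greater than $2\ep$ on $\sum_j b_j^{(k)}$, hence $\sum_j b_j^{(k)}\ge 2\ep\cdot 1_{A_k}$ (the infimum of state values of a self-adjoint element is $\min\spek$), and this coordinatewise inequality passes to $\prod_\omega A_k$ because it holds on a set of $\omega$; evaluating the character then gives $0\ge 2\ep$, a contradiction. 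All the individual steps check out (positivity of $b_j^{(k)}$, $\chi(b_j)=0$, the spectral lower bound, and the harmless modification of the representing sequence off $J$). What each approach buys: the paper's is shorter granted Kirchberg's lemma and gives the additional information that the character is an $\omega$-limit of pure states on the $A_k$; yours is self-contained and elementary, using nothing beyond polar decomposition, basic facts about states and spectra, and the definition of the ultraproduct, which makes the equivalence arguably more transparent.
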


\begin{proof} We prove first the ``if" part. By Proposition \ref{prop:n-ep-char} it suffices to show that $\prod_\omega A_k$ has $(N,\ep)$-characters for all $(N,\ep)$. Fix $(N,\ep)$ and find $I \in \omega$ such that $A_k$ has $(N,\ep)$-characters for each $k \in I$. Let $u_1, \dots, u_N$ be unitaries in $\prod_\omega A_k$, and let $(u_j^{(k)}) \in \prod_{k=1}^\infty A_k$ be a lift of $u_j$. Then for each $k \in I$ there is a state $\rho_k$ on $A_k$ such that $|\rho_k(u_j^{(k)})| \ge 1 - \ep$ for $j=1,2, \dots, N$. Choose arbitrary states $\rho_k$ on $A_k$ for $k \notin I$ and define a state $\rho$ on $\prod_\omega A_k$  by $\rho(x) = \lim_\omega \rho_k(x_k)$, where $(x_k) \in \prod_{k=1}^\infty A_k$ is a lift of $x$. (A priori, $\rho$ defines a state on $\prod_{k=1}^\infty A_k$, and one checks that it vanishes on the ideal $c_\omega(\{A_k\})$.) Then 
$$|\rho(u_j)| = \lim_{k \to \omega} |\rho_k(u_j^{(k)})| \ge \inf_{k \in I}  |\rho_k(u_j^{(k)})| \ge 1 - \ep,$$
for $j=1,2, \dots, N$, which shows that $\prod_\omega A_k$ has $(N,\ep)$-characters.

Suppose next that $\prod_\omega A_k$ has a character $\rho$. Fix $(N,\ep)$, and let $J$ be the set of those $k \in \N$ for which $A_k$ does not have $(N,\ep)$-characters. For each $k \in J$ choose unitaries $u_j^{(k)}$ in $A_k$, $j=1,2, \dots, N$, such that there is no state $\rho'$ on $A_k$ for which 
$|\rho'(u_j^{(k)})| \ge 1-\ep$ for all $j=1,2, \dots, N$. Choose arbitrary unitaries $u_j^{(k)} \in A_k$ for $k \notin J$, and let $u_j$ be the unitary element $(u_j^{(k)})$ in $\prod_{k=1}^\infty A_k$. Let $B$ be the (separable) sub-\Cs{} of $\prod_\omega A_k$ generated by the unitaries $\pi_\omega(u_j)$, where $\pi_\omega$ is the quotient mapping $\prod_{k=1}^\infty A_k \to \prod_\omega A_k$. By \cite[Lemma 2.5]{Kir:Abel} there is a sequence $\rho_k$ of pure states on $A_k$ such that $\rho(\pi_\omega(x)) = \lim_\omega \rho_k(x_k)$ for all $x = (x_k) \in \prod_{k=1}^\infty A_k$ with $\pi_\omega(x) \in B$. Now, 
$$1 = |\rho(\pi_\omega(u_j))| = \lim_\omega |\rho_k(u_j^{(k)})| = \liminf_{k \to \omega}  |\rho_k(u_j^{(k)})|  = \sup_{I \in \omega} \inf_{k \in I}  |\rho_k(u_j^{(k)})|.$$
It follows that there exists $I \in \omega$ such that $ |\rho_k(u_j^{(k)})| \ge 1 - \ep$ for all $k \in I$ and for all $j=1,2, \dots, N$. This entails that $I \cap J = \emptyset$. Hence $A_k$ has $(N,\ep)$-characters for all $k \in I$. 
\end{proof}

\noindent We can relate the existence of $(N,\ep)$-characters on a \Cs{} $A$ to the divisibility quantity $\dv_2(A)$. 

\begin{theorem} For each pair $(N,\ep)$, where $N \ge 2$ is an integer and $\ep >0$, there exists an integer $n \ge 2$ such that every unital\Cs{} $A$ which satisfies $\dv_2(A) \ge n$ has $(N,\ep)$-characters. Conversely, for every integer $n \ge 2$ there exists a pair $(N,\ep)$, where $N \ge 2$ is an integer and where $\ep > 0$, such that every unital \Cs{} $A$ which has $(N,\ep)$-characters satisfies $\dv_2(A) \ge n$. 
\end{theorem}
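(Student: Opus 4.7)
My plan is to derive both directions by contradiction using an ultraproduct argument, relying on three ingredients already proved: Corollary~\ref{cor:char}, which identifies unital \Cs s with characters as precisely those with $\dv_2 = \infty$; Proposition~\ref{prop:ultrapower}(iii), which computes $\dv_2$ of an ultraproduct as $\limsup_\omega \dv_2(A_k)$; and Proposition~\ref{prop:prod-char}, which characterises when an ultraproduct admits a character in terms of the existence of $(N,\ep)$-characters in an $\omega$-large set of factors. Fix throughout a free ultrafilter $\omega$ on $\N$.

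\textbf{Forward direction.} Fix $(N,\ep)$ and suppose, for contradiction, that no integer $n$ works. For each $k \ge 2$ I pick a unital \Cs{} $A_k$ satisfying $\dv_2(A_k) \ge k$ and lacking $(N,\ep)$-characters. Then $\dv_2(A_k) \to \infty$, hence $\limsup_\omega \dv_2(A_k) = \infty$, and Proposition~\ref{prop:ultrapower}(iii) gives $\dv_2\bigl(\prod_\omega A_k\bigr) = \infty$. Corollary~\ref{cor:char} then yields a character on $\prod_\omega A_k$, and Proposition~\ref{prop:prod-char}, applied to this one pair $(N,\ep)$, produces a set $I\in\omega$ such that $A_k$ has $(N,\ep)$-characters for every $k\in I$. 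Since $I$ is non-empty, this contradicts the choice of the $A_k$'s.

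\textbf{Converse direction.} Fix $n \ge 2$ and suppose, for contradiction, that for every pair $(N,\ep)$ there is a unital \Cs{} $B(N,\ep)$ which has $(N,\ep)$-characters yet satisfies $\dv_2(B(N,\ep)) < n$. Choose $N_k = k$ and $\ep_k = 1/k$, and set $A_k := B(N_k,\ep_k)$. The definition of $(N,\ep)$-characters is monotone in its two parameters (enlarging $N$ or shrinking $\ep$ only strengthens the condition), so for each fixed pair $(N,\ep)$ the cofinite set $I := \{k : N_k \ge N,\ \ep_k \le \ep\}$ belongs to $\omega$ and every $A_k$ with $k \in I$ has $(N,\ep)$-characters. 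Proposition~\ref{prop:prod-char} therefore furnishes a character on $\prod_\omega A_k$, forcing $\dv_2(\prod_\omega A_k) = \infty$ by Corollary~\ref{cor:char}. But Proposition~\ref{prop:ultrapower}(iii) gives $\dv_2(\prod_\omega A_k) = \limsup_\omega \dv_2(A_k) \le n-1 < \infty$, contradiction.

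\textbf{On the main difficulty.} Since Propositions~\ref{prop:ultrapower} and \ref{prop:prod-char} and Corollary~\ref{cor:char} together do all the real work, the proof is essentially bookkeeping on the ultraproduct, and I do not anticipate any genuine obstacle. The only small point to verify is the monotonicity of the $(N,\ep)$-character condition used in the converse direction, which is immediate from Definition~\ref{def:N-ep-char}; indeed, if $A$ has $(N',\ep')$-characters with $N' \ge N$ and $\ep' \le \ep$, then restricting a witnessing state to any $N$-tuple of unitaries and noting $1-\ep' \ge 1-\ep$ shows $A$ has $(N,\ep)$-characters.
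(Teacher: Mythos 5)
Your proof is correct and follows essentially the same route as the paper: both directions argue by contradiction on an ultraproduct, combining Proposition~\ref{prop:ultrapower}, Proposition~\ref{prop:prod-char} and Corollary~\ref{cor:char} (the paper merely packages the forward step as Corollary~\ref{cor:dv-char}, which is exactly your combination of the latter two facts, and likewise uses the $(k,1/k)$-character sequence with the implicit monotonicity you spell out).
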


\begin{proof}  Suppose that the first claim were false. Then there would exist a pair $(N,\ep)$ and a sequence $(A_n)$ of unital \Cs s such that $\dv_2(A_n) \ge n$ and none of the $A_n$'s have $(N,\ep)$-characters. However, if $\omega$ is any free ultrafilter $\N$, then $\prod_\omega A_n$ has a character by Corollary \ref{cor:dv-char}, whence $A_n$ has $(N,\ep)$-characters for each $n$ in some subset $I \in \omega$, a contradiction.

Suppose next that the second statement were false. Then there would exist an integer $n \ge 2$ and a  sequence $(A_k)$ of unital \Cs s such that $A_k$ has $(k,1/k)$-characters but $\dv_2(A_k) < n$. Let $\omega$ be a free ultrafilter on $\N$. It then follows from Proposition \ref{prop:prod-char} that $\prod_\omega A_k$ has a character. Hence $\dv_2(\prod_\omega A_k) = \infty$, whence $\lim_\omega \dv_2(A_k) = \infty$ by Proposition \ref{prop:ultrapower}, a contradiction. 
\end{proof} 

\begin{corollary} For each pair  $(N,\ep)$, where $N \ge 2$ is an integer and $\ep >0$, there exists a unital simple infinite dimensional \Cs{} which has $(N,\ep)$-characters. 
\end{corollary}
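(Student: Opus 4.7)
The plan is to combine the theorem immediately preceding the corollary with Theorem~\ref{thm:simple}. Both inputs are in hand, so no new construction is required.

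First, given the pair $(N,\ep)$, the first half of the preceding theorem produces an integer $n \ge 2$ such that every unital \Cs{} $A$ with $\dv_2(A) \ge n$ has $(N,\ep)$-characters. Next, I would apply Theorem~\ref{thm:simple} (with its parameter taken to be $n$) to obtain a unital simple infinite dimensional AH-algebra $A$ satisfying $\dv_2(A) > n$, in particular $\dv_2(A) \ge n$. By the choice of $n$, this $A$ has $(N,\ep)$-characters, completing the proof.

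There is essentially no obstacle here: the hard work has already been carried out in Theorem~\ref{thm:simple} (which produces simple unital infinite dimensional \Cs s with arbitrarily large weak divisibility constant) and in the preceding theorem (which converts large $\dv_2$ into $(N,\ep)$-characters). The corollary is simply a matching of quantifiers.
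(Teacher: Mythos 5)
Your proposal is correct and is exactly the argument the paper intends (the corollary is stated without proof precisely because it is this immediate combination): the preceding theorem supplies $n$ such that $\dv_2(A)\ge n$ forces $(N,\ep)$-characters, and Theorem~\ref{thm:simple} applied with parameter $n$ yields a simple unital infinite dimensional AH-algebra with $\dv_2(A)>n$. Nothing is missing.
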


\noindent We end this section by giving several equivalent formulation of some well-known open problems for \Cs s. Recall that a \Cs{} $A$ has the Global Glimm Halving property if there is a $^*$-homomorphism $CM_2(\C) \to A$ whose image is full in $A$.

\begin{proposition} \label{prop:GGH3}
The following statements  are equivalent:
\begin{enumerate}
\item Every unital \Cs{} that has no finite dimensional representation has
the Global Glimm Halving property.
\item For all unital \Cs s $A$, if $\dv_m(A) < \infty$ for all $m \ge 2$, then $\Dv_2(A) < \infty$.
\item For every sequence $(A_k)$ of unital \Cs s, if $\sup_k \dv_m(A_k) < \infty$ for all $m$, then $\sup_k \Dv_2(A_k) < \infty$.
\end{enumerate}
\end{proposition}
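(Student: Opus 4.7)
The plan is to obtain (i) $\Leftrightarrow$ (ii) as an immediate translation via Corollary~\ref{cor:finite-div}, and then to derive (ii) $\Leftrightarrow$ (iii) from Proposition~\ref{prop:ultrapower} by passing to the product \Cs{} $\prod_{k=1}^\infty A_k$. No new estimates are needed; the content is entirely in invoking the right earlier statements.

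For (i) $\Leftrightarrow$ (ii), recall that by Corollary~\ref{cor:finite-div}(iii), a unital \Cs{} $A$ satisfies $\dv_m(A) < \infty$ if and only if $\rnk(A) \ge m$. Hence $\dv_m(A) < \infty$ holds for every integer $m \ge 2$ if and only if $\rnk(A) = \infty$, i.e., if and only if $A$ admits no non-zero finite dimensional representation. On the other hand, by Corollary~\ref{cor:finite-div}(i), $\Dv_2(A) < \infty$ is equivalent to the existence of a \sh{} $CM_2(\C) \to A$ with full image, which is precisely the Global Glimm Halving property. So (i) and (ii) say literally the same thing. I would write this explicitly as a one-line identification using the two parts of Corollary~\ref{cor:finite-div}.

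For (iii) $\Rightarrow$ (ii) one applies (iii) to the constant sequence $A_k = A$. Conversely, suppose (ii) holds and let $(A_k)$ be any sequence of unital \Cs s with $\sup_k \dv_m(A_k) < \infty$ for every integer $m \ge 2$. Put $B = \prod_{k=1}^\infty A_k$. By Proposition~\ref{prop:ultrapower}(iii) we have $\dv_m(B) = \sup_k \dv_m(A_k) < \infty$ for each $m \ge 2$, so (ii) applied to $B$ gives $\Dv_2(B) < \infty$. Invoking Proposition~\ref{prop:ultrapower}(i), we conclude $\sup_k \Dv_2(A_k) = \Dv_2(B) < \infty$, which is (iii).

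There is no real obstacle: the only subtlety is ensuring that both $\dv_m$ and $\Dv_2$ are computed on the \emph{same} \Cs{} $B = \prod_k A_k$ in the product step, so that Proposition~\ref{prop:ultrapower} can be applied in both directions. Once one notes this, the proof is a direct chain of the quoted results.
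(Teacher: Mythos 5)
Your proposal is correct and follows essentially the same route as the paper's own proof: the equivalence (i) $\Leftrightarrow$ (ii) via the two parts of Corollary~\ref{cor:finite-div}, the trivial constant-sequence argument for (iii) $\Rightarrow$ (ii), and the passage to $\prod_{k=1}^\infty A_k$ with both identities from Proposition~\ref{prop:ultrapower} for (ii) $\Rightarrow$ (iii). Nothing is missing.
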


\begin{proof} (i) $\Leftrightarrow$ (ii). $A$ has no finite dimensional representations if and only if $\rnk(A) \ge m$ for all $m$, which by Corollary~\ref{cor:finite-div} (iii) is equivalent to $\dv_m(A) < \infty$ for all $m$. 
It was shown in Corollary~\ref{cor:finite-div} (i) that the Global Glimm Halving property holds for $A$ if and only if $\Dv_2(A) < \infty$.

(ii) $\Rightarrow$ (iii). Given a sequence $(A_k)$ of unital \Cs s such that $\sup_k \dv_m(A_k) < \infty$ for all $m$. Consider the \Cs{} $A = \prod_{k=1}^\infty A_k$. Then $\dv_m(A) = \sup_k \dv_m(A_k) < \infty$ by Proposition \ref{prop:ultrapower}. Thus $\Dv_2(A) < \infty$, which implies that $\sup_k \Dv_2(A_k) = \Dv_2(A) < \infty$, again by Proposition~\ref{prop:ultrapower}. 

(iii) $\Rightarrow$ (ii) is trivial: Take $A_k = A$ for all $k$.
\end{proof}

\begin{proposition}  \label{prop:GGH4}
The following statements  are equivalent:
\begin{enumerate}
\item All unital \Cs s $A$ that have no finite dimensional representation contain two positive full elements that are orthogonal to each other. 
\item For all unital \Cs s $A$, if $\dv_m(A) < \infty$ for all $m \ge 2$, then $\ddv_2(A) < \infty$.
\end{enumerate}
\end{proposition}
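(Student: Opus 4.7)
The plan is to observe that Proposition~\ref{prop:GGH4} reduces, via Corollary~\ref{cor:finite-div}, to a direct tautological translation between the two statements, exactly in parallel with the equivalence (i)~$\Leftrightarrow$~(ii) of Proposition~\ref{prop:GGH3}. The only ingredients needed are parts (ii) and (iii) of Corollary~\ref{cor:finite-div}.

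First, I would use Corollary~\ref{cor:finite-div}~(iii) to rewrite the hypothesis in (ii): for a unital \Cs{} $A$, the condition $\dv_m(A) < \infty$ is equivalent to $\rnk(A) \ge m$. Consequently, the assumption that $\dv_m(A) < \infty$ for every integer $m \ge 2$ says precisely that $\rnk(A) = \infty$, i.e.\ that $A$ has no non-zero finite dimensional representation.

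Next, I would apply Corollary~\ref{cor:finite-div}~(ii) with $m=2$ to rewrite the conclusion: $\ddv_2(A) < \infty$ holds if and only if there exist full, pairwise orthogonal positive elements $b_1, b_2 \in A$, which is exactly the content of (i).

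Combining these two translations, condition (ii) reads literally: ``if $A$ has no finite dimensional representation, then $A$ contains two full mutually orthogonal positive elements,'' which is verbatim condition (i). No further work is required, and in particular there is no genuine obstacle — the substance of the proposition is already contained in Corollary~\ref{cor:finite-div}.
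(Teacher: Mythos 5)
Your proposal is correct and follows essentially the same route as the paper: the paper's proof likewise translates the hypothesis via Corollary~\ref{cor:finite-div}~(iii) (equivalence of $\dv_m(A)<\infty$ for all $m$ with the absence of finite dimensional representations) and the conclusion via Corollary~\ref{cor:finite-div}~(ii) with $m=2$, so the two statements become verbatim reformulations of each other. No gap to report.
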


\begin{proof} (i) $\Leftrightarrow$ (ii). As in the proof of Proposition \ref{prop:GGH3}, 
$A$ has no finite dimensional representations if and only if $\dv_m(A) < \infty$ for all $m$. 
It was shown in Corollary~\ref{cor:finite-div} (ii) that $A$ contains two positive full elements that are orthogonal to each other if and only if $\ddv_2(A) < \infty$.

(ii) $\Rightarrow$ (iii) is similar to the proof of (ii) $\Rightarrow$ (iii) in Proposition \ref{prop:GGH3}. 
(iii) $\Rightarrow$ (ii) is trivial.
\end{proof}

\section{Infinite elements} \label{sec:infinite}

\noindent
Following \cite{KirRor:pi}, a Cuntz class $u$ in the Cuntz semigroup of a \Cs{} $A$ is said to be properly 
infinite if it satisfies  $u=2u$ (whence $u=\infty \! \cdot  \! u$). Similarly, a countably generated Hilbert module over $A$ is properly infinite if its  Cuntz class is properly infinite. We saw in Proposition~\ref{prop:propinf}  how infiniteness of a Cuntz class can arise from a certain type of divisibility property. In this section we shall investigate this and related phenomena further with emphasis on the following property:

\begin{definition} \label{def:omega-n}
Let $A$ be a \Cs, let $n \ge 1$ be an integer, and let $u$ be an element in $\Cu(A)$. We say that $u$ is $(\omega,n)$-decomposable
if there exist $x_1,x_2,\dots$ such that $\sum_{i=1}^\infty x_i\leq u$ and $u \le nx_i$ for all $i$.
\end{definition}

\noindent If $u$ is $(\omega,n)$-decomposable, then  $u$ is $(m,n)$-decomposable for all $m$. In particular, by Proposition~\ref{prop:propinf} (ii), it follows that $nu$ is properly infinite. 

The condition in the definition above can be reformulated in several different ways:

\begin{lemma}\label{rem:indexpartition}
Let $A$ be a \Cs, let $n \ge 1$ be an integer, and let $u$ be an element of $\Cu(A)$. Then the following conditions are equivalent:
\begin{enumerate}
\item $u$ is $(\omega,n)$-decomposable,
\item there exist $x_1,x_2,\dots$ such that $\sum_{i=1}^\infty x_i\leq u$ and $nx_i = \infty \! \cdot \! u$ for all $i$,

\item there exist $x_1,x_2,\dots$ and $y_1,y_2,\dots$ such that  $\sum_{i=1}^\infty x_i\leq u$, 
$y_{i-1}\leq y_i\leq nx_i$ for all $i$, and $\infty \! \cdot \! u\leq \infty \! \cdot \! \sup y_i$,

\item there exist $x_1,x_2,\dots$ such that $\sum_{i=1}^\infty x_i\leq u$ and $n\sum_{j=k}^\infty x_j = \infty \! \cdot \! u$ for all $k$.
\end{enumerate}
\end{lemma}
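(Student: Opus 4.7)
The plan is to prove (i) $\Leftrightarrow$ (ii) $\Leftrightarrow$ (iii) via the cycle (i) $\Rightarrow$ (iii) $\Rightarrow$ (ii) $\Rightarrow$ (i), and then add (ii) $\Rightarrow$ (iv) and (iv) $\Rightarrow$ (iii) to close the loop. The implications (ii) $\Rightarrow$ (i) and (ii) $\Rightarrow$ (iv) are essentially tautological, since $nx_i = \infty \cdot u$ already dominates both $u$ and $n\sum_{j\ge k} x_j$ (while $n\sum_{j\ge k} x_j \le nu \le \infty \cdot u$ holds for free). For (i) $\Rightarrow$ (iii) I will take the same $(x_i)$ together with the constant choice $y_i := u$; the inequality $y_i \le n x_i$ is precisely the hypothesis in (i), the sequence is trivially increasing, and $\infty \cdot u = \infty \cdot \sup y_i$ on the nose.

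For (iii) $\Rightarrow$ (ii) I plan to partition $\mathbb{N} = \bigsqcup_k A_k$ into pairwise disjoint infinite subsets and set $z_k := \sum_{j\in A_k} x_j$. Then $\sum_k z_k \le \sum_j x_j \le u$ and $n z_k \le n u \le \infty \cdot u$ are immediate. The key lower bound comes from $n z_k = \sum_{j\in A_k} n x_j \ge \sum_{j\in A_k} y_j$, combined with the observation that, because $A_k$ is infinite and $(y_j)$ is increasing, $\sum_{j\in A_k,\, j\ge N} y_j \ge m\cdot y_N$ for every $m$, hence dominates $\infty \cdot y_N$ for each $N$. Taking suprema over $N$ and interchanging sup and (infinite) sum via axiom (A4) yields $\sum_{j\in A_k} y_j \ge \infty \cdot \sup_j y_j \ge \infty \cdot u$ by the last hypothesis in (iii), so $n z_k = \infty \cdot u$ as required.

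The main obstacle is (iv) $\Rightarrow$ (iii). Given $(x_j)$ satisfying (iv), my plan is to fix via (A2) a sequence $(u_i)$ with $u_i \ll u_{i+1} \ll u$ and $\sup_i u_i = u$, and then build $0 = L_0 < L_1 < L_2 < \cdots$ inductively so that $u_i \le n \sum_{j=L_{i-1}+1}^{L_i} x_j$. This is possible because (iv) gives
\[
\infty \cdot u \;=\; n\sum_{j\ge L_{i-1}+1} x_j \;=\; \sup_M \, n\sum_{j=L_{i-1}+1}^{M} x_j,
\]
and the transport property $u' \ll u \le v \Rightarrow u' \ll v$ (valid in $\Cu(A)$ via the standard $(a-\varepsilon)_+$-calculus) applied to $u_i \ll u \le \infty \cdot u$ produces $u_i \ll \infty \cdot u$; the sequential definition of $\ll$ then picks a finite $L_i > L_{i-1}$ with the desired inequality. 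Setting $\tilde x_i := \sum_{j=L_{i-1}+1}^{L_i} x_j$ and $y_i := u_i$ makes $\sum_i \tilde x_i = \sum_j x_j \le u$, the monotonicity $y_{i-1} = u_{i-1} \le u_i = y_i \le n \tilde x_i$, and $\sup_i y_i = u$, so that $\infty \cdot u \le \infty \cdot \sup y_i$.

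The hard part is precisely this last step: converting the ``every tail satisfies $n\sum = \infty \cdot u$'' condition of (iv) into a usable increasing sequence $(y_i)$ bounded by the $n\tilde x_i$'s, with no compactness hypothesis on $u$. The transport property of $\ll$ along $\le$, although not listed explicitly as an axiom, is what allows finite partial sums to capture enough of $\infty \cdot u$; without it, the inductive selection of the $L_i$ would stall when $u$ is not compact.
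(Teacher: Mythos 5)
Your proof is correct and runs on the same devices as the paper's: partitioning $\N$ into infinitely many infinite subsets, block-summing the $x_j$ along a $\ll$-increasing sequence $(u_i)$ with $\sup_i u_i=u$ (the transport of $\ll$ along $\le$ that you flag is exactly what the paper also uses, implicitly, when it selects its indices $k_i$ in (iv)$\Rightarrow$(i)), and the (A4)-interchange showing that an infinite sum of increasing terms dominates $\infty\cdot\sup_i y_i$. You merely traverse a different implication cycle --- (i)$\Rightarrow$(iii)$\Rightarrow$(ii)$\Rightarrow$(i) together with (ii)$\Rightarrow$(iv)$\Rightarrow$(iii), instead of the paper's (i)$\Rightarrow$(ii)$\Rightarrow$(iii)$\Rightarrow$(iv)$\Rightarrow$(i) --- so the substance is essentially the same.
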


\begin{proof} (i) $\Rightarrow$ (ii). Suppose that $x_1,x_2, \dots$ satisfy $\sum_{i=1}^\infty x_i \le u \le nx_j$. Let $\{I_i\}_{i=1}^\infty$ be a partition of the natural numbers into infinite sets. Then the elements $x_i' = \sum_{j \in I_i} x_j$ witness that condition (ii) holds. 

To get (ii) $\Rightarrow$ (iii), set $y_i=\infty \! \cdot \!u$ for all $i$ and choose $(x_i)_{i=1}^\infty$ that satisfies  (ii). 

(iii)$\Rightarrow$ (iv).  Let $(x_i)_{i=1}^\infty$ and $(y_i)_{i=1}^\infty$ be as in (iii).
Then $\sum_{i=k}^\infty x_i\geq \infty \! \cdot \! y_k$. Observe that the left side of this inequality
decreases as $k$ increases. Thus, $\sum_{i=k}^\infty x_i\geq \infty \! \cdot \! y_{k'}$ for all $k'\geq k$.
Taking the supremum over all $k'\geq k$ we get 
$\sum_{i=k}^\infty x_i\geq \infty \! \cdot \! \sup y_i\geq \infty \! \cdot \!  u$.

 (iv) $\Rightarrow$ (i). Suppose that $x_1,x_2, \dots$ satisfy the condition in (iv). Let $(u_i)_{i=1}^\infty$ be such that $u_i\ll u_{i+1}$ for all $i$ and $\sup_i u_i=u$. Then there exists a sequence $1 = k_0 < k_1 < \cdots$ such that the elements $x_i' = \sum_{j=k_{i-1}}^{k_{i}-1} x_j$ satisfy $x_i'\geq u_i$ for all $i$. Let
$\{I_i\}_{i=1}^\infty$ be a partition of the natural numbers into infinite sets.
Then the elements $x_i'' = \sum_{j \in I_i} x_j'$ satisfy  the condition in Definition~\ref{def:omega-n}. 
\end{proof}
%
%
%

\noindent It was shown in \cite{OPR:Cuntz} that the Corona Factorization Property for a \Cs{} is equivalent to a condition for its Cuntz semigroup, that we here shall refer to as (CFP4S).  A complete ordered abelian semigroup is said to have (CFP4S) if whenever $(x_i)_{i=1}^\infty$ is a full sequence, $x'\ll x_1$, and $(y_i)_{i=1}^\infty$ is such that $my_i\geq x_i$ for all $i$ and some $m$, then there exists $n$ such that  $\sum_{i=1}^n y_i\geq x'$. Recall that a full sequence is one that is increasing and such that $\sup x_i$ is a full element, i.e., $\infty \! \cdot  \! \sup x_i$ is the largest element of the semigroup (which we shall denote by $\infty$).

In Section 6 we discussed a related notion, called the strong Corona Factorization Property, and its analog for the Cuntz semigroup.

The proposition below relates the (CFP4S) with the notion of
$(\omega,m)$-divisibility. In fact, it is a consequence of this proposition
that a semigroup in the category $\CCu$ has a full elements which is
$(\omega,m)$-divisible and not properly infinite if and only if the
semigroup does not satisfy (CFP4S).

\begin{proposition} \label{prop:CFP4S}
The following four conditions are equivalent for any object $S$ in the category $\CCu$. 
\begin{enumerate}
\item $S$ has property (CFP4S).
\item For every sequence $(y_i)_{i=1}^\infty$ in $S$, if there is a full sequence $(x_i)_{i=1}^\infty$ in $S$ such that $my_i \ge x_i$ for some $m$ and for all $i$, then $\sum_{i=1}^\infty y_i = \infty$. 
\item For every sequence $(y_i)_{i=1}^\infty$ in $S$,  if $my_i=\infty$
for some $m$ and for all $i$, then $\sum_{i=1}^\infty y_i=\infty$.
\item  For every sequence $(y_i)_{i=1}^\infty$ in $S$, if $\sum_{i=n}^\infty my_i=\infty$
for some $m$ and for all $n$, then $\sum_{i=1}^\infty y_i=\infty$.
\item For every full element $y$ in $S$, if $y$ is $(\omega,m)$-decomposable for some $m$, then $y$ is properly infinite (whence $y = \infty$). 
\end{enumerate}
\end{proposition}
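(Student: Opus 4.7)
The plan is to prove all five conditions equivalent via the cycle (i) $\Rightarrow$ (ii) $\Rightarrow$ (iii) $\Rightarrow$ (i), supplemented by (i) $\Rightarrow$ (iv) $\Rightarrow$ (v) $\Rightarrow$ (iii). The recurring device is a partition $\N = \bigsqcup_{k=1}^\infty I_k$ into countably many disjoint infinite subsets; the key lemma is that if $(x_i)$ is an increasing sequence in $S$ whose supremum is full, then $\sum_{i \in I_k} x_i = \infty$ for every $k$. This follows from the bound $\sum_{j \in I_k,\, j \le N} x_j \ge (\#\{j \in I_k : K \le j \le N\}) \cdot x_K$, taking $\sup_N$ to obtain $\sum_{i \in I_k} x_i \ge \infty \cdot x_K$, and then $\sup_K$, using axiom (A4) to commute suprema with scalar multiplication.

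For (i) $\Rightarrow$ (ii), apply (CFP4S) within each block $I_k$ (noting that $(x_i)_{i \in I_k}$ remains a full sequence since any infinite subset of $\N$ is cofinal): this gives $z_k := \sum_{i \in I_k} y_i \ge x_j$ for every $j \in I_k$, hence $z_k \ge \sup_i x_i$. Then $\sum y_i \ge \sum_k z_k \ge \infty \cdot \sup x_i = \infty$. The implication (ii) $\Rightarrow$ (iii) is immediate via the constant full sequence $x_i = \infty$. For (iii) $\Rightarrow$ (i), apply the partition to the pair $(x_i),(y_i)$ of (i): the lemma gives $m z_k \ge \sum_{i \in I_k} x_i = \infty$, so (iii) applied to $(z_k)$ yields $u := \sum y_i = \sum_k z_k = \infty$. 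Since $x' \ll x_1 \le m y_1 \le m u = \infty = u$, the definition of $\ll$ combined with $u = \sup_n \sum_{i=1}^n y_i$ delivers $x' \le \sum_{i=1}^n y_i$ for some $n$.

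For (i) $\Rightarrow$ (iv), write $\infty = \sup_k w_k$ with $w_k \ll w_{k+1}$ by (A2). Using the tail hypothesis, inductively choose $0 = N_0 < N_1 < N_2 < \cdots$ so that $\sum_{i=N_{k-1}+1}^{N_k} m y_i \ge w_k$, and set $y'_k := \sum_{i=N_{k-1}+1}^{N_k} y_i$. Then $(w_k)$ is a full sequence with $m y'_k \ge w_k$. For any $s \ll \infty$, choose $k$ with $s \le w_k$; then $s \le w_k \ll w_{k+1}$ implies $s \ll w_{k+1}$, and (i) applied to the shifted sequences starting at index $k+1$ yields $s \le \sum_{j=k+1}^M y'_j \le \sum_i y_i$ for some $M$. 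Since this holds for every $s \ll \infty$, we obtain $\sum y_i = \infty$.

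For (iv) $\Rightarrow$ (v), let $y$ be full and $(\omega,m)$-decomposable with witness $(x_i)$, so $\sum x_i \le y$ and $y \le m x_i$. Then $\sum_{i \ge n} m x_i \ge \sum_{i \ge n} y = \infty \cdot y = \infty$ by fullness, so (iv) gives $\sum x_i = \infty$ and hence $y \ge \sum x_i = \infty$. For (v) $\Rightarrow$ (iii), put $u := \sum y_i$; then $m u \ge m y_1 = \infty$ so $u$ is full, and $(y_i)$ itself witnesses $(\omega,m)$-decomposability of $u$ (since $u \le \infty = m y_i$ trivially), so (v) yields $u = \infty$. The main obstacle will be justifying the key sub-sum lemma rigorously from axioms (A1)--(A4), and then carefully handling the interplay between fullness, the $\ll$-relation, and finite partial sums so that the conclusions of the form ``$=\infty$'' are obtained rather than merely ``full''; the crucial point throughout is that once one shows $u=\infty$, the equality $u = mu$ allows $x_1$ to be absorbed into $u$ even though $x_1 \le mu$ is all that is initially available.
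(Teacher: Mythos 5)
Your proposal is correct, but you arrange the equivalences differently from the paper. The paper proves (i) $\Leftrightarrow$ (ii) and then the cycle (ii) $\Rightarrow$ (iii) $\Rightarrow$ (v) $\Rightarrow$ (iv) $\Rightarrow$ (ii), leaning on Lemma~\ref{rem:indexpartition} to make (iii) $\Rightarrow$ (v) and (v) $\Rightarrow$ (iv) almost immediate (the partition of $\N$ into infinitely many infinite sets lives inside that lemma's proof); its only ``hard'' computation is the estimate $\sum_{i\ge k}x_i\ge\infty\cdot x_k$ in (iv) $\Rightarrow$ (ii). You instead close the loop as (i) $\Rightarrow$ (ii) $\Rightarrow$ (iii) $\Rightarrow$ (i) together with (i) $\Rightarrow$ (iv) $\Rightarrow$ (v) $\Rightarrow$ (iii), so several implications are proved in the direction opposite to the paper's, you never invoke Lemma~\ref{rem:indexpartition} (you verify the needed decomposability statements by hand in (iv) $\Rightarrow$ (v) and (v) $\Rightarrow$ (iii)), and you isolate the paper's inline estimate as an explicit sub-sum lemma ($\sum_{i\in I}x_i=\infty$ for any infinite $I$ when $\sup_i x_i$ is full), which is proved correctly via (A4). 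The price of your route is the extra construction in (i) $\Rightarrow$ (iv) (choosing blocks whose sums dominate a $\ll$-increasing sequence with supremum $\infty$ and then applying (CFP4S) to the derived full sequence $(w_k)$), which has no counterpart in the paper but is carried out soundly; what it buys is independence from Lemma~\ref{rem:indexpartition} and very short proofs of (iv) $\Rightarrow$ (v) and (v) $\Rightarrow$ (iii). One small point of care in your (i) $\Rightarrow$ (ii): a single application of (CFP4S) to the block $(x_i)_{i\in I_k}$ only controls elements compactly below $x_{\min I_k}$, so to get $z_k\ge x_j$ for \emph{every} $j\in I_k$ you must apply (CFP4S) to each tail of the block (still full sequences) and then pass to the supremum over $x'\ll x_j$ using (A2) --- the same tail trick you use explicitly in (i) $\Rightarrow$ (iv), so this is a presentational compression rather than a gap.
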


\begin{proof} (i) $\Rightarrow$ (ii). Apply the (CFP4S) to the tail sequences $(x_i)_{i=n}^\infty$ and $(y_i)_{i=n}^\infty$. Then we get $x' \le \sum_{i=n}^N y_i \le \sum_{i=n}^\infty y_i$ for all $x' \ll x_n$, whence $\infty = \sup_n x_n \le  \sum_{i=1}^\infty y_i$.

(ii) $\Rightarrow$ (i). If $(x_i)$ is a full sequence and if $(y_i)$ is another sequence such that $x_i \le my_i$, then $\sum_{i=1}^\infty y_i = \infty$ by (ii). In particular, if $x' \ll x_1$, then $x' \le \sum_{i=1}^n y_i$ for some $n$ by the definition of compact containment.

(ii) $\Rightarrow$ (iii). Suppose that (ii) holds and that $(y_i)$ is a sequence in $S$ such that $my_i = \infty$ for all $i$.  Then $\infty = \sum_{i=1}^\infty y_i$ by (ii) with $x_i = \infty$ for all $i$. 

(iii)  $\Rightarrow$ (v). Suppose that (iii) holds, and that $y \in S$ is full and $(\omega,m)$-decomposable. Then $y$ satisfies condition (ii) of  Lemma~\ref{rem:indexpartition}, so there exists a sequence $(y_i)$ such that $\sum_{i=1}^\infty y_i \le y$ and $my_i = \infty \! \cdot \! y = \infty$ for all $i$. But then $\sum_{i=1}^\infty y_i = \infty$ because (iii) holds, whence $y  = \infty$.

(v)  $\Rightarrow$ (iv). Suppose that (v) holds and let $(y_i)_{i=1}^\infty$ be a sequence in $S$ such that $\sum_{i=n}^\infty my_i=\infty$ for some $m$ and for all $n$. Put $y = \sum_{i=1}^\infty y_i$. We must show that $y = \infty$. We know that $my = \infty$, so $y$ is full. It is easy to see that $y$ satisfies condition (iv) of Lemma~\ref{rem:indexpartition}, so $y$ is $(\omega,m)$-decomposable. Hence $y= \infty$ by the assumption that (v) holds. 

(iv) $\Rightarrow$ (ii). Let $(x_i)_{i=1}^\infty$ be a full sequence in $S$, let $m \ge 1$ be a positive integer, and let $(y_i)$ be such that $my_i\geq x_i$ for all $i$. Then 
$$\sum_{i=n}^\infty my_i\geq \sum_{i=n}^\infty x_i \geq \sum_{i=k}^\infty x_i \ge \infty \! \cdot \! x_k$$
for all $k \ge n$.  As $\infty = \sup_k \infty \! \cdot \! x_k$, we conclude that  $\sum_{i=n}^\infty my_i=\infty$ for all $n$. By (iv) this entails that $\sum_{i=1}^\infty y_i=\infty$, and in particular that $x_1 \le \sum_{i=1}^\infty y_i$. 
\end{proof}

\noindent In the following example we describe a Cuntz semigroup with an element $u$ that is $(\omega,2)$-decomposable but not properly infinite. In particular, $2u$ is properly infinite while $u$ is not. This example is well known in other contexts, and it was discussed in the paragraph preceding Corollary~\ref{cor:key}.

\begin{example}\label{ex:infty2decomposable}
Let $X=(S^2)^\infty$ be a countable cartesian product of 2-dimensional spheres, and let $p_i \in C(X, \cK)$ be the one-dimensional projection arising as the pull back of a non-trivial rank one projection $p$ in $C(S^2) \otimes \cK$ along the $i$th coordinate projection $X \to S^2$, cf.\ the comments above Corollary~\ref{cor:key}. Let $e$ be a trivial one-dimensional projection. Then $e \nprecsim \bigoplus_{i=1}^N p_j$ for all $N$, because the Euler class of the projection on the right-hand side is non-trivial. 

Put $x_i = \langle p_i \rangle$, $v = \langle e \rangle$, and put $u = \sum_{i=1}^\infty x_i$. Then $v \nleq u$,  $v \le 2x_i$, and $u+u = \infty \! \cdot \! v= \infty \! \cdot \! u$. Hence $2\sum_{j=i}^\infty x_i =  \infty \! \cdot \! v= \infty \! \cdot \! u$, and so $u$ is $(\omega,2)$-decomposable; but $u$ is not pro\-per\-ly infinite. 
\end{example}

\noindent
We now look more closely at the properties of $(\omega,n)$-decomposable elements. 

\begin{proposition}\label{prop:tensorstability}
Let $(a_i)_{i=0}^\infty$ be a sequence of mutually orthogonal positive elements in a \Cs{} $A$ such that 
$\sum_{i=0}^\infty a_i$ converges to a strictly positive element in $A$. Assume that $\sum_{j\geq i} n\langle a_i\rangle=\infty$ for all $i$. Then $A\otimes B$ is stable for every $\sigma$-unital \Cs{} $B$ with $\rank(B)\geq n$.
\end{proposition}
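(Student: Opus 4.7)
The plan is to apply the Hjelmborg--R\o rdam stability criterion for $\sigma$-unital \Cs{}s: $A\otimes B$ is stable if it admits an approximate unit $(h_M)$ of positive contractions with $h_{M+1}h_M=h_M$, and for every $M$ and $\epsilon>0$ there exists $d\in(A\otimes B)_+$ with $dh_M=0$ and $(h_M-\epsilon)_+\precsim d$. I would build such an approximate unit as $h_M=\bar a_M\otimes b_M$, using the partial sums $\bar a_M=\sum_{i=0}^M a_i$ together with an approximate unit $(b_M)$ of $B$ satisfying $b_{M+1}b_M=b_M$; the relation $h_{M+1}h_M=h_M$ then follows from $\bar a_{M+1}\bar a_M=\bar a_M$ (by orthogonality of the $a_i$'s) and the analogous property of the $b_M$'s. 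The dominator $d$ will live inside the hereditary subalgebra $\overline{(e_A-\bar a_M)A(e_A-\bar a_M)}\otimes B$ of the ``tail'' of $A\otimes B$, which is automatically orthogonal to everything supported in $\overline{\bar a_MA\bar a_M}\otimes B$ -- in particular to $h_M$ -- since $(e_A-\bar a_M)\bar a_M=0$.

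The heart of the argument is the Cuntz-semigroup estimate
\[
\big\langle\bar a_M\otimes(e_B-\delta)_+\big\rangle\;\le\;\big\langle(e_A-\bar a_M)\otimes e_B\big\rangle \quad\text{in }\Cu(A\otimes B),
\]
valid for every $M\ge 0$ and every $\delta>0$. This would be established as follows. The rank hypothesis on $B$ together with Theorem~\ref{thm:finite-div}(iii) yields that $\langle B\rangle$ is weakly $(n,\omega)$-divisible; since $\langle(e_B-\delta)_+\rangle\ll\langle B\rangle$, there exist $y_1,\dots,y_N\in\Cu(B)$ with $ny_j\le\langle B\rangle$ and $\langle(e_B-\delta)_+\rangle\le y_1+\cdots+y_N$. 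Consequently the left-hand side is at most $\sum_{i\le M,\,j\le N}\langle a_i\rangle\otimes y_j$. Next, the assumption $\sum_{j\ge i}n\langle a_j\rangle=\infty$ is precisely condition (iv) of Lemma~\ref{rem:indexpartition} applied to $u=\langle A\rangle$ and the sequence $(\langle a_i\rangle)$. Running the proof of the implications (iv)$\Rightarrow$(i)$\Rightarrow$(ii) from that lemma on the tail sequence $(\langle a_k\rangle)_{k>M}$ (which again verifies (iv)) produces a partition of $\{k>M\}$ into pairwise disjoint infinite subsets $T_{ij}$, one per pair $(i,j)\in\{0,\dots,M\}\times\{1,\dots,N\}$, such that $n\sum_{k\in T_{ij}}\langle a_k\rangle\ge\langle A\rangle$ in $\Cu(A)$ for every pair. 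For each such pair,
\begin{align*}
\langle a_i\rangle\otimes y_j &\le \langle A\rangle\otimes y_j \le \Big(n\sum_{k\in T_{ij}}\langle a_k\rangle\Big)\otimes y_j \\
&= \sum_{k\in T_{ij}}\langle a_k\rangle\otimes(ny_j) \le \sum_{k\in T_{ij}}\langle a_k\rangle\otimes\langle B\rangle.
\end{align*}
Summing over $(i,j)$ and using the pairwise disjointness of the $T_{ij}$'s yields the bound $\sum_{k>M}\langle a_k\rangle\otimes\langle B\rangle=\langle(e_A-\bar a_M)\otimes e_B\rangle$, completing the estimate.

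Given the estimate, for any $\epsilon>0$ one picks $\delta>0$ so that $(h_M-\epsilon)_+\precsim \bar a_M\otimes(e_B-\delta)_+$ and then $\eta>0$ with $d:=((e_A-\bar a_M)\otimes e_B-\eta)_+$ satisfying $(h_M-\epsilon)_+\precsim d$. Since $d\in\overline{(e_A-\bar a_M)A(e_A-\bar a_M)}\otimes B$ and $(e_A-\bar a_M)\bar a_M=0$, we get $dh_M=0$, verifying the Hjelmborg--R\o rdam criterion and hence the stability of $A\otimes B$. The main obstacle I anticipate is the partition step: one must refine Lemma~\ref{rem:indexpartition} from a mere existence of an abstract sequence witnessing $(\omega,n)$-decomposability to a genuine \emph{disjoint} partition of tail indices simultaneously labeling the finitely many pairs $(i,j)$ -- this is achieved by interleaving an approximating sequence $(u_i)$ for $\langle A\rangle$ with a partition of $\N$ into infinitely many infinite subsets, following the block construction in the proof of (iv)$\Rightarrow$(i)$\Rightarrow$(ii) of that lemma.
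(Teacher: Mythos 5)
Your central Cuntz-semigroup estimate is correct and is in the same circle of ideas as the paper's proof: both arguments rest on the stability criterion of Hjelmborg--R{\o}rdam, on Theorem~\ref{thm:finite-div}~(iii) (rank $\geq n$ gives weak $(n,\omega)$-divisibility of $\langle e_B\rangle$), and on the block-selection trick from the proof of Lemma~\ref{rem:indexpartition}. The bookkeeping differs: the paper first regroups the $a_i$ into orthogonal blocks so that $n\langle a_i\rangle=\infty$ for every $i$, chooses a sequence $(x_i)$ in $\Cu(B)$ with $nx_i\le\langle e_B\rangle$ and $\sum_{i\ge j}x_i=\infty$, and then shows that the tail element $c=\sum_{i>N}a_i\otimes e_B$ actually has $\langle c\rangle=\infty$, so domination is immediate; you instead keep the $a_i$ fixed, use only finitely many $y_j$ on the $B$-side, and distribute disjoint infinite unions of tail blocks $T_{ij}$ with $n\sum_{k\in T_{ij}}\langle a_k\rangle\ge\langle A\rangle$ among the finitely many pairs $(i,j)$. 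That comparison argument works (note that the hypothesis only controls full tails $\{k\ge i\}$, so, as you say, the $T_{ij}$ must be built from blocks $B_l$ with $n\sum_{k\in B_l}\langle a_k\rangle\ge u_l$ for $u_l\nearrow\langle A\rangle$, exactly as in Lemma~\ref{rem:indexpartition}; also the interchange of the infinite sum with $\,\otimes\,y_j$ is justified because all infinite sums involved are classes of norm-convergent sums of mutually orthogonal elements).

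There is, however, a genuine error in your set-up of the stability criterion. With $\bar a_M=\sum_{i=0}^M a_i$ one has $\bar a_{M+1}\bar a_M=\bar a_M^2$, not $\bar a_M$ (orthogonality kills the cross term but does not make $\bar a_M$ idempotent), so the relation $h_{M+1}h_M=h_M$ fails; worse, $(\bar a_M\otimes b_M)$ is not an approximate unit of $A\otimes B$ at all, since $\bar a_M\to a$ in norm and $a$ is merely strictly positive, so $h_M(c\otimes d)\to ac\otimes d\neq c\otimes d$ in general. Thus the hypotheses of the approximate-unit form of the Hjelmborg--R{\o}rdam criterion that you invoke are not satisfied by your $(h_M)$. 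The repair is easy and brings you onto the paper's track: use the criterion in the form quoted in the paper, applied to the strictly positive element $a\otimes e_B$ of $A\otimes B$ (here $e_A$ in your notation must be read as $a=\sum_i a_i$, so that $e_A-\bar a_M=\sum_{k>M}a_k\ge 0$). Since $\|a_i\|\to 0$, for each $\ep>0$ one has $(a\otimes e_B-\ep)_+=\sum_{i\le M}(a_i\otimes e_B-\ep)_+$ for some finite $M$, which lies in the hereditary subalgebra generated by $\bar a_M\otimes e_B$ and is therefore orthogonal to $c:=\bigl(\sum_{k>M}a_k\bigr)\otimes e_B$; your estimate (after the routine step $(a\otimes e_B-\ep)_+\precsim\bar a_M\otimes(e_B-\delta)_+$ for small $\delta$) then gives $(a\otimes e_B-\ep)_+\precsim c$, and the $\eta$-cut-down of $c$ is unnecessary since $c$ itself is already orthogonal to $(a\otimes e_B-\ep)_+$. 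With that correction your argument is a valid, slightly more hands-on variant of the paper's proof.
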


\begin{proof} 
Set $\sum_{i=0}^\infty a_i=a\in A$ and let $b$ be a strictly positive element in $B$. 
Notice that $a\otimes b$ is a strictly positive element of $A\otimes B$.
In order to prove stability of $A\otimes B$ we will use the stability criterion obtained in \cite{HjeRor:stable}: $A\otimes B$ is stable if for every $\ep>0$
there exists a positive element $c$ in $A$ which is orthogonal to $(a\otimes b-\ep)_+$ and satisfies $\langle(a\otimes b-\ep)_+\rangle\leq \langle c\rangle$.

Arguing as in the proof of  (iv) $\Rightarrow$ (i) in Lemma \ref{rem:indexpartition} we may assume that $n\langle a_i\rangle=\infty$
for all $i$. By Theorem \ref{thm:finite-div} (iii), $\rank(B)\geq n$ is equivalent to  weak 
$(n,\omega)$-divisibility for $\langle b\rangle$. Thus there exist a sequence $(x_i)_{i=1}^\infty$ in $\Cu(B)$
such that $nx_i\leq \langle b\rangle$ for all $i$ and $\sum_{i=1}^\infty x_i=\infty$. We can form a new sequence $(x_i')_{i=1}^\infty$ in which each $x_i$ appears repeated infinitely often. 
In this way we may assume without loss of generality  that 
$\sum_{i\geq j}^\infty x_i=\infty$ for all $j$. Find positive elements $b_i$ in $B$ such that $x_i =\langle b_i\rangle$ and $\|b_i\|\leq 2^{-i}$. 
Then
\[
(a\otimes b-\ep)_+=\sum_{i=1}^\infty (a_i\otimes b-\ep)_+=
\sum_{i=1}^N (a_i\otimes b-\ep)_+,  
\]
for some integer $N \ge 1$. Set $c=\sum_{i>N} a_i\otimes b$. Then $c$ is orthogonal to $(a\otimes b-\ep)_+$. Also, 
\[
\langle a_i\otimes b\rangle=
\langle a_i\rangle\otimes \langle b\rangle\geq n \,
\langle a_i\rangle\otimes \langle b_i\rangle=\infty \! \cdot  \! \langle a\rangle\otimes \langle b_i\rangle
\]
for each $i$. Hence
\[
\langle c\rangle =\sum_{i>N} \langle a_i\otimes b\rangle=\sum_{i>N} \infty \!\cdot \! \langle a\rangle\otimes \langle b_i\rangle=\infty.
\]
Thus, $\langle(a\otimes b-\ep)_+\rangle\leq \infty=\langle c\rangle$. This shows that $A\otimes B$ is stable.
\end{proof}

\noindent 
The proposition above can be applied to the \Cs{} $A = P(C(X) \otimes \cK)P$ arising from Example \ref{ex:infty2decomposable} with $P = \bigoplus_{i=1}^\infty p_i \in \cM(C(X) \otimes \cK)$. The \Cs{} $A$ is not stable (because $e \notin A$ while $e \in M_2(A)$), but $A \otimes B$ is stable for every \Cs{} $B$ that does not have a character by  Proposition~\ref{prop:tensorstability} and 
Example~\ref{ex:infty2decomposable}.

The example obtained  in \cite{Ror:sns} of a simple \Cs{} $A$ of stable rank 1 such that $M_n(A)$ is stable, while $M_{n-1}(A)$ is not stable, likewise satisfies the hypotheses of Proposition \ref{prop:tensorstability}. In fact, to the authors knowledge, every example of a \Cs{} that tensored with $M_n(\C)$ becomes stable also has the stronger property of becoming stable after being tensored with any \Cs{} that has no representations of dimension less than $n$. This raises the following question: 

\begin{question}
Is there a \Cs{}  $A$ such that $M_2(A)$ is stable but $A\otimes B$ is not stable for some \Cs{} $B$ without 
characters?
\end{question}

\begin{proposition} The following statements are equivalent for every \Cs{} $A$ with a strictly positive element $a$.
\begin{enumerate}
\item
$\langle a\rangle$ is $(\omega,n)$-decomposable.

\item
$A$ contains a full hereditary subalgebra $B$ such that $B\otimes C$ is stable for any  
\Cs{} $C$ such that $\rank(C)\geq n$.

\item 
$A$ contains a full hereditary subalgebra $B$ such that $M_n(B)$ is stable.
\end{enumerate}
\end{proposition}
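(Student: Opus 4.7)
The plan is to prove the cyclic chain (i)$\,\Rightarrow\,$(ii)$\,\Rightarrow\,$(iii)$\,\Rightarrow\,$(i). The middle implication is immediate upon taking $C = M_n(\C)$, which has rank $n$.

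For (i)$\,\Rightarrow\,$(ii), I would begin from the equivalent formulation given by Lemma~\ref{rem:indexpartition}(iv): there exist $x_i \in \Cu(A)$ with $\sum_i x_i \leq \langle a\rangle$ and $n\sum_{j \geq k} x_j = \infty \cdot \langle a\rangle$ for every $k$. By Lemma~\ref{lm:comp}(ii) applied iteratively to finite partial sums, together with a standard diagonalization (choosing tolerances $\varepsilon_i \to 0$ fast enough for norm convergence), realize these $x_i$'s as pairwise orthogonal positive elements $a_i$ in $\overline{aAa} = A$ with $\sum_i a_i$ norm-convergent to some $a' \in A$. Set $B := \overline{a' A a'}$; then each $a_i$ lies in $B$, $a'$ is strictly positive in $B$, and $B$ is full in $A$ because $\infty \cdot \langle a'\rangle \geq \infty \cdot \sum_i \langle a_i\rangle \geq n\sum_i \langle a_i\rangle = \infty \cdot \langle a\rangle$. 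Brown's theorem yields $\Cu(B) \cong \Cu(A)$, under which the tail-sum identity transfers, so the sequence $(a_i)$ satisfies the hypothesis of Proposition~\ref{prop:tensorstability} for $B$, yielding (ii).

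For (iii)$\,\Rightarrow\,$(i), given $B$ full hereditary in $A$ with $M_n(B)$ stable, combine Brown's theorem ($A \otimes \cK \cong B \otimes \cK$) with the stability $M_n(B) \otimes \cK \cong M_n(B)$ to obtain $A \otimes \cK \cong M_n(B)$; translated to $\Cu(A) = \Cu(B)$, this gives $n\langle B\rangle$ equal to the maximum element $\infty \cdot \langle a\rangle$, hence $\langle a\rangle \leq n\langle B\rangle = \sup_\varepsilon n\langle (b-\varepsilon)_+\rangle$ for $b$ strictly positive in $B$. For each $i$, compact containment $\langle (a-1/i)_+\rangle \ll \langle a\rangle$ yields $\varepsilon_i > 0$ with $\langle (a-1/i)_+\rangle \leq n\langle (b-\varepsilon_i)_+\rangle$. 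Stability of $M_n(B)$ provides pairwise orthogonal $d_i \in M_n(B)$ of Cuntz class $n\langle B\rangle$, and Lemma~\ref{lm:comp}(ii) applied inside $\overline{d_i M_n(B) d_i}$ splits each $d_i$ into $n$ pairwise orthogonal pieces of Cuntz class $\langle (b-\varepsilon_i)_+\rangle$. Selecting one piece from each $d_i$, an argument using the isomorphism $A \otimes \cK \cong M_n(B)$ together with Lemma~\ref{lm:comp}(ii) applied in $A$ produces pairwise orthogonal $c_i \in A$ with $\langle c_i\rangle = \langle (b-\varepsilon_i)_+\rangle$ and $\sum_i c_i$ norm-convergent in $A$. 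Setting $x_i := \langle c_i\rangle$ and $y_i := \langle (a-1/i)_+\rangle$, the conditions $\sum x_i \leq \langle a\rangle$, $y_{i-1} \leq y_i \leq n x_i$, and $\sup_i y_i = \langle a\rangle$ of Lemma~\ref{rem:indexpartition}(iii) are met, giving (i).

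The main obstacle is the extraction step in (iii)$\,\Rightarrow\,$(i): converting the orthogonal sequence produced inside $M_n(B)$, whose total Cuntz class is only bounded by $n\langle B\rangle$, into an orthogonal sequence in $A$ whose total Cuntz class is bounded by $\langle a\rangle$. This requires delicate use of the Brown isomorphism $A \otimes \cK \cong M_n(B)$ together with a careful selection of the tolerances $\varepsilon_i$, leveraging the actual stability of $M_n(B)$ rather than merely the Cuntz-semigroup condition $n\langle B\rangle = \infty \cdot \langle a\rangle$.
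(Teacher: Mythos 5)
Your implication (iii)$\Rightarrow$(i) is where the proposal breaks down. The step you yourself flag as ``the main obstacle'' --- converting the orthogonal family built inside $M_n(B)$ into a pairwise orthogonal family inside $A$ with total Cuntz class bounded by $\langle a\rangle$ --- is not an auxiliary technicality to be handled by ``delicate use of the Brown isomorphism''; it is the entire content of the implication, and the paper does not derive it from Brown's theorem at all but quotes a purpose-built result, \cite[Lemma 5.3]{OPR:Cuntz}, which produces pairwise orthogonal positive elements $b_k$ \emph{in $B$} with $\langle (b-1/k)_+\rangle \le n\langle b_k\rangle$. Note the crucial difference from what you aim for: the cited lemma gives pieces that are only ``an $n$-th of $b$'' in size, whereas you want orthogonal $c_i\in A$ with $\langle c_i\rangle=\langle (b-\ep_i)_+\rangle$, i.e.\ infinitely many nearly full-sized copies of $b$ sitting orthogonally in $\overline{aAa}$. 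That target is too strong: if the tolerances $\ep_i$ tend to $0$ (which is what compact containment naturally forces when $n\langle(b-\ep)_+\rangle$ must eventually dominate $\langle (a-1/i)_+\rangle$ for all $i$), then $\sum_{i\ge k}\langle c_i\rangle\ge\infty\cdot\langle b\rangle=\infty\cdot\langle a\rangle$, and together with $\sum_i\langle c_i\rangle\le\langle a\rangle$ this would make $\langle a\rangle$ properly infinite --- contradicting Example~\ref{ex:infty2decomposable}, where $\langle a\rangle$ is $(\omega,2)$-decomposable (so (iii) holds) but not properly infinite. So the isomorphism $A\otimes\cK\cong M_n(B)$ cannot deliver what you ask of it; the argument needs the genuinely different statement of the OPR lemma (or a proof of it via the Hjelmborg--R{\o}rdam stability criterion), which your sketch neither states nor proves.

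There is also a gap in (i)$\Rightarrow$(ii). You cannot ``realize these $x_i$'s'' exactly: Lemma~\ref{lm:comp}(ii) only yields orthogonal elements equivalent to $\ep$-cut-downs $(b_i-\ep)_+$, so each $\langle a_i\rangle$ may be strictly smaller than $x_i$, and then the tail-sum identity of Lemma~\ref{rem:indexpartition}(iv) need not survive; invoking Brown's theorem to say it ``transfers'' does not address this loss. The paper avoids the problem by working from formulation (ii) of Lemma~\ref{rem:indexpartition} ($nx_i=\infty$ for each $i$), which is robust under cut-downs: one chooses $a_i$ with $\langle a_i\rangle\le\langle b_i\rangle$ and $n\langle a_i\rangle\ge\langle (b-1/i)_+\rangle$ for $b$ strictly positive in $A\otimes\cK$, and these two conditions are exactly what Proposition~\ref{prop:tensorstability} and fullness of $B$ require. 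Your version could be repaired along these lines (a diagonal choice of tolerances against the compact elements $\langle(b-1/m)_+\rangle$), but as written the realization step is unjustified. The implication (ii)$\Rightarrow$(iii) via $C=M_n(\C)$ is fine and agrees with the paper.
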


\begin{proof}
(i) $\Rightarrow$ (ii). Let $(x_i)_{i=1}^\infty$ be such that $\sum_{i=1}^\infty x_i\leq \langle a\rangle$ and $nx_i=\infty$. Let $b\in A\otimes \mathcal K$ be strictly positive and 
let $b_i\in A\otimes \mathcal K$ be mutually orthogonal
elements such that $\langle b_i\rangle=x_i$. We can find mutually orthogonal positive elements $a_i$ in $A$ such that $\langle a_i\rangle\leq \langle b_i\rangle$, 
$n\langle a_i\rangle\geq \langle(b-1/ i)_+\rangle$, and such that $\sum_{i=1}^\infty a_i$ is convergent. It then follows from Proposition~\ref{prop:tensorstability}  that (ii) holds when $B$ is the hereditary sub-\Cs{} generated by $\sum_{i=1}^\infty a_i$. 

(ii) $\Rightarrow$ (iii) is clear.

(iii) $\Rightarrow$ (i). Since $A$ is $\sigma$-unital, and $B$ is stably isomorphic to $A$, $B$ is $\sigma$-unital too. Let $b$ be a strictly positive element in $B$. 

Use \cite[Lemma 5.3]{OPR:Cuntz} to find a sequence $(b_k)$ of pairwise orthogonal positive elements in $B$ such that $\langle (b-1/k)_+ \rangle \le n\langle (b-1/k)_+ \rangle \le n \langle b_k \rangle$ for all $k$. Then condition (iii) of  Lemma~\ref{rem:indexpartition} is satisfied with $u = \langle b \rangle$, $x_k = \langle b_k \rangle$, and $y_k = \langle (b-1/k)_+ \rangle$, whence $\langle b \rangle$ is $(\omega,n)$-decomposable. 

Finally, by the fact that $\langle b\rangle\leq \langle a\rangle\leq \infty\! \cdot  \!\langle b\rangle$, it follows by the equivalence of (i) and (ii) in Lemma~\ref{rem:indexpartition} that $\langle a \rangle$ is $(\omega,n)$-decomposable.
%
\end{proof}

\begin{definition}
Let $n\in \N$ and $u\in \Cu(A)$. We call $u$ weakly $(\omega,n)$-divisible if for every $u'\ll u$ there exist $x_i\in \Cu(A)$, $i=1,2,\dots,n$, such that $\infty\! \cdot  \! x_i\leq u$ and 
$u'\leq \sum_{i=1}^n x_i$.
\end{definition}

\noindent Observe that if $u$ is weakly $(\omega,n)$-divisible, then $u$ is weakly $(m,n)$-divisible for all $m\in \N$, whence $nu$ is properly infinite by Proposition \ref{prop:propinf}.

We will next give an example of a Cuntz semigroup element that is weakly $(\omega,2)$-divisible but not properly infinite.  This example needs some preparatory results. 
Let us first recall an example given by Dixmier and Doaudy in \cite{DixDou:champs}.

\begin{example}[Dixmier--Douady, {\cite[\S 17]{DixDou:champs}}] \label{ex:DixDou} 
Let $B_{\infty}$ denote the closed unit ball of $l_2(\N)$ endowed with the weak topology.
Let $l_2(B_\infty)$ denote the $C(B_\infty)$-Hilbert module  of continuous maps
from $B_\infty$ to $l_2(\N)$. We will construct a 
countably generated $C(B_\infty)$-Hilbert  module $D$ such that $l_2(B_\infty)\hookrightarrow D\hookrightarrow l_2(B_\infty)$
but $D\ncong l_2(B_\infty)$.  

Let $x\colon B_\infty\to l_2(\N)\oplus \C  e$ be given by 
\[
x(z)=z+\sqrt{1-\|z\|^2}\cdot e,\hbox{ for }z\in B_\infty.
\] 
Consider the $C(B_\infty)$-module  $D_0$ of functions from $B_\infty$ to $l_2(\N)\oplus \C e$ that have the form $y+x\lambda$, with $y\in l_2(B_\infty)$
and $\lambda\in C(B_\infty)$. The module $D_0$ is a pre-Hilbert C*-module over $C(B_\infty)$ when endowed with the pointwise inner product. Indeed, if $y_1+x\lambda_1$
and $y_2+x\lambda_2$ are vectors in $D_0$ then 
\[
\langle y_1+x\lambda_1,y_2+x\lambda_2\rangle=\langle y_1,y_2\rangle+
\langle y_1,z\rangle\lambda_2+\overline{\lambda_1}\langle z,y\rangle+\overline{\lambda_1}\lambda_2\in C(B_\infty).
\]
Let $D$ denote the completion of $D_0$ with respect to the norm induced by its $C(B_\infty)$-valued inner product. 
Observe that $l_2(B_\infty)\hookrightarrow D_0\subseteq D$. Since $D$ is countably generated, we also have that $D\hookrightarrow l_2(B_\infty)$
by Kasparov's stabilization theorem. Let us see that $D\ncong l_2(B_\infty)$. 
Consider $E\subseteq D$, the orthogonal complement of $\{x\}$. Then $E=\overline{E_0}$, where
\begin{align}\label{Emodule}
E_0=\{y+x\lambda \in D\mid \langle y(z),z\rangle+\lambda(z)=0\, \hbox{ for all }z\in B_\infty\}.
\end{align}
It was implicitly shown by Dixmier and Douady, and explicitly pointed out by Blanchard and Kirchberg (\cite[Proposition 3.6]{BlanKir:Glimm}), that
 for any $v\in E$ there exists $z\in B_\infty$ such that $\langle v,v\rangle (z)=0$. That is, every section of $E$
vanishes at some point (we will reprove this fact in Proposition \ref{prop:sectionsvanish} below). Notice that 
\[
D=E+x\cdot C(B_\infty)\cong E\oplus C(B_\infty).
\]
It can be deduced from this that $D\ncong l_2(B_\infty)$ (see \cite[Proposition 19]{DixDou:champs}).
\end{example}

Let $B_3$ denote the unit ball in $\R^3$. Let $f\in M_2(B_3)^+$ be defined as
\[
f(x,y,z)=\frac{1}{2}
\begin{pmatrix}
1+z & x-iy\\
x+iy & 1-z
\end{pmatrix}.
\]
(The function $f$ is a homeomorphism from $B_3$ to the set of positive elements of $M_2(\C)$ with trace 1. On the boundary 2-sphere of $B_3$ it agrees with the tautological rank 1 projection.) Consider the $C(B_3)$-module associated to $f$:
\begin{align}\label{moduleF}
F:=\overline{f 
\begin{pmatrix}
C(B_3)\\
C(B_3)
\end{pmatrix}
} .
\end{align}

\begin{proposition}\label{prop:sectionsvanish}
Let $B_\infty$ and $B_3$ be as before. Let  $X=\prod_{i\in I} X_i$, where each $X_i$ is either $B_\infty$ or $B_3$
and the index set $I$ is non-empty. For each $i$, let $H_i$ be the pull-back along the projection map $\pi_i\colon X\to X_i$ of either the module $E$ defined  in Example \ref{ex:DixDou} or the module $F$ defined in \eqref{moduleF}. Finally, let $H$ be the $C(X)$-module defined by
$H=\bigoplus_{i\in I} H_i$.
Then $C(X)$ does not embed in $H$ as a $C(X)$-module (i.e., for every $v\in H$ there exists $z\in X$ such that $\langle v,v\rangle (z)=0$).
\end{proposition}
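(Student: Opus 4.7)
The plan is to prove the proposition by a Tychonoff--Schauder fixed-point argument that generalizes the single-factor Dixmier--Douady technique (for $E$ over $B_\infty$) and the Hopf/Euler-class argument (for $F$ over $B_3$) to an arbitrary product. Given $v = (v_i)_{i \in I} \in H$, the strategy is to build a continuous self-map $\Phi\colon X \to X$ every fixed point $z^{\ast}$ of which satisfies $v_i(z^{\ast}) = 0$ for all $i$, so that $\langle v, v \rangle(z^{\ast}) = \sum_i \|v_i(z^{\ast})\|^2 = 0$.

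First I would realize each $v_i$ concretely: for indices $i$ with $H_i = \pi_i^* F$, write $v_i(z) = f(z_i)\binom{\alpha_i(z)}{\beta_i(z)}$ with $\alpha_i, \beta_i \in C(X)$; for indices $i$ with $H_i = \pi_i^* E$, use the defining orthogonality $\langle v_i, \pi_i^*x\rangle = 0$ to write $v_i(z) = Y_i(z) - x(z_i)\langle z_i, Y_i(z)\rangle$ with $Y_i \in C(X, l_2(\N))$, so that
\[
\|v_i(z)\|^2 = \|Y_i(z)\|^2 - |\langle z_i, Y_i(z)\rangle|^2.
\]

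Next I would define the component maps $\phi_i\colon X \to X_i$. In the $B_\infty$ case, take $\phi_i(z) = P_{B_\infty}\bigl(z_i + \epsilon Y_i(z)\bigr)$ for a small $\epsilon > 0$, where $P_{B_\infty}$ is the metric projection onto the unit ball of $l_2(\N)$. The fixed-point equation $z^\ast_i = \phi_i(z^\ast)$ splits into two cases: either $P_{B_\infty}$ is inactive at $z^\ast$, forcing $Y_i(z^\ast) = 0$ and hence $v_i(z^\ast) = 0$; or $\|z^\ast_i\| = 1$ with $Y_i(z^\ast)$ a real scalar multiple of $z^\ast_i$, in which case the displayed identity directly gives $\|v_i(z^\ast)\|^2 = 0$. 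In the $B_3$ case, use the Hopf fibration $S^3 \to S^2$ sending $\xi \in \C^2\setminus\{0\}$ to the unique $\psi_0(\xi) \in S^2$ with $f(\psi_0(\xi))\xi = 0$, and build $\phi_i$ by a similar projection-plus-small-step construction from $\|\xi\|\psi_0(\xi)$ with $\xi = \binom{\alpha_i(z)}{\beta_i(z)}$; an analogous two-case analysis at the fixed point forces $v_i(z^\ast) = f(z^\ast_i)\xi_i(z^\ast) = 0$, the degenerate case $\xi_i(z^\ast) = 0$ contributing $v_i(z^\ast) = 0$ automatically.

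Having produced a continuous $\Phi = (\phi_i)_{i \in I}\colon X \to X$, I would invoke the Tychonoff--Schauder fixed-point theorem: $X$ is a compact convex subset of the locally convex topological vector space obtained as the product of copies of $\R^3$ (for the $B_3$ factors) and of $l_2(\N)$ with its weak topology (for the $B_\infty$ factors), so continuity of $\Phi$ produces a fixed point $z^\ast \in X$, and the component-wise analysis above yields $v_i(z^\ast) = 0$ for all $i$. The main obstacle will be the honest construction and continuity of the $\phi_i$: the Hopf prescription is a priori discontinuous at $\xi = 0$ (a difficulty that must be cured by inserting the vanishing scalar $\|\xi\|$), and the metric projection $P_{B_\infty}$ is only norm-continuous whereas $B_\infty$ carries the weak topology, so one must design both maps so that joint continuity in $z \in X$ is preserved. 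This is essentially the technical content of Dixmier--Douady's original argument (and Blanchard--Kirchberg's reformulation), here upgraded to a jointly continuous family of fixed-point problems parameterized by the remaining coordinates.
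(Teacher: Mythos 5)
Your skeleton --- realize the components concretely, build from them a self-map of $X$ (with the weak topology on the $B_\infty$ factors), and invoke Tychonoff--Schauder --- is the same as the paper's, which also runs a fixed-point argument off the orthogonality relation defining $E_0$ and the tautological-bundle property of $F$ on $S^2$ (the paper argues by contradiction after normalizing $\langle v,v\rangle=1$ and uses cutoff-normalized maps, with a sign flip on the Hopf component to force $\tilde z_i=-\tilde z_i$). But as written your proposal has genuine gaps at precisely the points it defers. The central one: your map $\phi_i(z)=P_{B_\infty}\bigl(z_i+\epsilon Y_i(z)\bigr)$ is not continuous, and this cannot be ``designed around'' while keeping the metric projection. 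The norm of $l_2(\N)$ is only lower semicontinuous for the weak topology (e.g.\ $e_1+e_n$ converges weakly to $e_1$ while $\|e_1+e_n\|=\sqrt2$), so $P_{B_\infty}$ is not weakly continuous; and the weak topology on the $B_\infty$ coordinates is forced, since it is the only one in which $X$ is compact, which Schauder requires. This is not ``the technical content of Dixmier--Douady'': their argument, and the paper's, avoids the metric projection entirely by rescaling with the scalar $z\mapsto\langle v_i,v_i\rangle(z)^{1/2}$, which \emph{is} continuous because it comes from the module inner product. Moreover your fixed-point dichotomy (``projection inactive/active'') is tied to the metric projection, so curing the continuity changes the analysis. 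A repair does exist --- e.g.\ $\phi_i(z)=\bigl(z_i+\epsilon Y_i(z)\bigr)/\bigl(1+\epsilon\|Y_i(z)\|\bigr)$ is weakly continuous, maps into $B_\infty$, and a fixed point forces $Y_i(z^*)=\|Y_i(z^*)\|\,z^*_i$, hence $v_i(z^*)=0$ --- but that is a different map from the one you wrote.

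Two further gaps. For the $B_3$ factors you never actually construct $\phi_i$, and the obvious candidate $\phi_i(z)=\rho(\|\xi_i(z)\|)\,\psi_0(\xi_i(z))$ fails: a fixed point can land in the open ball, where $f(z^*_i)$ is invertible and $\xi_i(z^*)\neq0$, so $v_i(z^*)\neq0$ and the asserted ``two-case analysis'' breaks in the intermediate regime; one needs a specific form such as $\phi_i(z)=\bigl(z_i+\epsilon\|\xi_i(z)\|\psi_0(\xi_i(z))\bigr)/\bigl(1+\epsilon\|\xi_i(z)\|\bigr)$, which forces either $\xi_i(z^*)=0$ or $z^*_i=\psi_0(\xi_i(z^*))\in S^2$. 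Finally, the concrete parametrizations $v_i=Y_i-x(z_i)\langle z_i,Y_i\rangle$ with $Y_i\in C(X,l_2(\N))$, and $v_i=f(z_i)\xi_i$ with $\xi_i\in C(X,\C^2)$, are only valid on dense submodules of the pull-backs; a general $v\in H$ is a module-norm limit, possibly with infinitely many nonzero components, so you must first approximate and use compactness of $X$ to pass the conclusion to the limit (and thereby also reduce to finitely many indices), exactly as in the opening paragraph of the paper's proof. With these three repairs your ``direct'' variant (every fixed point kills all components) does go through and is a reasonable alternative to the paper's contradiction argument; without them the proof as proposed does not compile.
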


\noindent
Notice that if every $X_i$ agrees with $B_3$, the above proposition can be proven using standard methods in algebraic topology (e.g., characteristic classes).
Indeed, it suffices to restrict to the boundary 2-sphere of each $X_i$ and use that on that set $F$ is the tautological rank 1 projective module.
It is the inclusion of the spaces $B_\infty$ in the definition of $X$ that forces us to use a different route in the proof.  

\begin{proof}
Let $v\in H$, and write $v=\sum_{i\in I} v_i$, with $v_i\in H_i$.
In order  to show $\langle v,v\rangle(z)=0$ for some $z\in X$, it suffices to prove this for  $v$ belonging to  a dense submodule of $H$.
For suppose that  $(v^{(n)})$ is a sequence in $H$ such that $v^{(n)}\to v$ and  $\langle v^{(n)},v^{(n)}\rangle (z_n)=0$ for some $z_n\in X$. Then by the compactness of $X$
there exists a subsequence $(z_{n_k})$ such that  $z_{n_k}\to z\in X$, and so $\langle v,v\rangle (z)=0$.
Thus, we may assume that the index set $I$ is finite. Furthermore, for the indices $i$ such that 
$H_i=\pi_i^*(E)$, we may assume that $v_i\in H_i'$, where  $H_{i}'\subseteq H_i$ is the pull back along $\pi_i$ of the dense submodule $E_0$ defined in \eqref{Emodule}.

In the sequel, we assume that $I=\{1,2,\dots,n\}$, $X_i=B_\infty$ for $i=1,2,\dots,n_1$, 
and $X_i=B_3$ for $i=n_1+1,\dots,n$, where $n_1\leq n$.

We will argue by contradiction that $\langle v,v\rangle(z)=0$ for some $z\in X$.
Suppose that $\langle v,v\rangle$ is invertible, and assume without loss of generality 
that $\langle v,v\rangle=1$. Observe that, for each $i\leq n_1$, $v_i$ is a function from $X$ into the unit ball of $l_2(\N)\oplus \C$, while for $n_1<i\leq n$ the entry $v_i$ is a function from $X$ into  the unit ball of $\C\oplus \C$ (let us denote it by $B_4$). Let $h_0\colon l_2(\N)\oplus \C\to l_2(\N)$ denote the projection onto the first direct summand 
and let $h_1\colon B_4\to B_3$ denote the Hopf fibration (extended to the unit ball):
\[
h_1(z_0,z_1):=(2z_0\overline{z_1},|z_0|^2-|z_1|^2).
\]
Let $\lambda\colon [0,1]\to [0,1]$ be such that $\lambda(0)=0$, $\lambda(t)=1$ for $t\in [\frac 1 n,1]$,  and $\lambda$ is linear in $[0,\frac 1 n]$. 
Define $\tilde h_0,\tilde h_1\colon B_4\to B_3$ by
\begin{align*}
\tilde h_0(w) =h_0\Big(\frac{\lambda(w)}{|w|}w\Big),\quad
\tilde h_1(w) =-h_1\Big(\frac{\lambda(w)}{|w|}w\Big).
\end{align*}
Consider the continuous  map $\Phi\colon X\to X$ given by the vector of functions
\[
\Phi:=(\tilde h_0\circ v_1,\tilde h_0\circ v_2,\dots, \tilde h_0\circ v_{n_1},\tilde h_1\circ v_{n_1+1},\dots,\tilde h_1\circ v_n).
\]
Since $X$ is a compact convex subset of the vector space $(l_2(\N))^{n_1}\times (\R^3)^{n-n_1}$,  the map $\Phi$
 has a fixed point by the Schauder fixed point theorem.  Let $\tilde z:=(\tilde z_i)_{i=1}^n\in X$ be a fixed point of $\Phi$.
Since $\|v(\tilde z)\|=1$, we must have $\|v_i(\tilde z)\|\geq \frac{1}{n}$ for at least one index $i$. Notice that both $\tilde h_{0}$ and $\tilde h_1$
map all vectors of norm at least $1/n$ into the unit sphere of either $B_\infty$ or $B_3$. It follows that the fixed point
$\tilde z$ satisfies  $\|\tilde z_i\|=1$ and $\tilde z_j=0$ for all $j\neq i$. 

There are two cases to consider: $i\leq n_1$ and $i> n_1$.
Suppose that $i\leq n_1$. The general form of $v_i\in H_i'$ is $f+(z_i+\sqrt{1-\|z_i\|^2}e)\alpha $, for some $f\colon X\to l_2(\N)$ and $\alpha\in C(X)$. Since $\|\tilde z_i\|=1$, we have  
\[v_{i}(\tilde z)=f(\tilde z)+\alpha(\tilde z) \tilde z_i=\tilde z_i.\] 
But $\langle f(\tilde z),\tilde z_{i}\rangle + \alpha(\tilde z)=0$.
This contradicts that $\|\tilde z_i\|=1$.   

Suppose that $i>n_1$. Since $z_i\mapsto v_i(\cdots,z_i,\cdots)$, with $z_i\in S^2$, is a section of the tautological bundle on $S^2$, we have $h_1\circ v_i(z)=z_i$ whenever $z_i\in S^2$.  It follows
that $\tilde h_1\circ v_i(\tilde z)=-\tilde z_i$. But $\tilde h_1\circ v_i(\tilde z)=\tilde z_i$, by the fixed point property of $\tilde z$. This again contradicts
that $\|\tilde z_i\|=1$.  
\end{proof}

\noindent 
We are now prepared to give examples of weakly $(\omega,2)$-divisible elements which are not properly infinite.

\begin{example}
Let $X=B_\infty\times B_3$ and consider the Hilbert module 
$H=\pi_1^*(E)\oplus \pi_2^*(F)$,
described in the statement of the previous proposition. We have shown that $[C(X)]\nleq [H]$. In particular,
$[H]$ is not properly infinite (since it is full). Let us show that $[H]$ is weakly $(\omega,2)$-divisible. Consider the open sets $U :=B_\infty \times B_3^+$ and $V :=B_\infty \times B_3^-$,
where $B_3^+$ and $B_3^-$ are (open) upper and lower hemispheres of $B_3$ that together cover $B_3$.
We claim that $l_2(U)\hookrightarrow HC_0(U)$ and $l_2(V)\hookrightarrow HC_0(V)$. Indeed, 
\begin{align*}
HC_0(U) &=\pi_1^*(E)C_0(U)\oplus \pi_2^*(F)C_0(U)\\
&=\pi_1^*(E)C_0(U)\oplus \pi_2^*(FC_0(B_3^+)).
\end{align*}
But $FC_0(B_3^+)\cong C_0(B_3^+)\oplus C_0(B_3^+\backslash S_2)$.
Therefore,
\begin{align*}
HC_0(U) &=\pi_1^*(E)C_0(U)\oplus C_0(U)\oplus F'\\
&=\pi_1^*(E\oplus C(B_\infty))C_0(U)\oplus F'.
\end{align*}
But $l_2(B_\infty)\hookrightarrow D=E\oplus C(B_\infty)$.
Thus, $l_2(U)\hookrightarrow HC_0(U)$. Symmetrically, we have that 
$l_2(V)\hookrightarrow HC_0(V)$. It follows that $[HC_0(U)]$ and $[HC_0(V)]$
are properly infinite, and $[H]\leq [HC_0(U)]+[HC_0(V)]$. Thus,
$[H]$ is weakly $(\omega,2)$-divisible.
\end{example}

\begin{remark}
The previous example answers a question posed in \cite[Question 3.10]{KirRor:pi}: 
If $a$ and $b$ are properly infinite positive elements, is $a+b$ properly infinite?
In the language of Hilbert modules, this question asks whether $H$ is properly infinite if $H=\overline{H_1+H_2}$, and $H_1,H_2\subseteq H$ are properly infinite submodules of $H$.
We obtain a counterexample taking $H$ as in the previous example, $H_1=HC_0(U)$ and $H_2=HC_0(V)$.
\end{remark}

\begin{example}
In this example we answer (in the negative) the following question, posed in \cite[Question 3.4]{KirRor:pi}: 
if $[H]$ is properly infinite, is the unit of $B(H)$ a properly infinite projection?
Let $X=B_\infty \times (B_3)^{\infty}$ and consider the Hilbert $C(X)$-module
\[
H=C(X)\oplus \pi_1^*(E)\oplus \bigoplus_{i=2}^\infty \pi_i^*(F).
\] 
The module $C(X)\oplus \pi_1^*(E)$ is the pull back along $\pi_1$ of the Dixmier-Douady module $D$. Since $l_2(C(B_\infty))$ embeds in $D$, $l_2(C(X))$ embeds in $C(X)\oplus \pi_1^*(E)$. Thus,  $[H]$ is properly infinite. Also, the direct sum of the module
$\bigoplus_{i=2}^\infty \pi_i^*(F)$ with itself gives $l_2(C(X))$ (because $F\oplus F$ contains $C(B_3)$ as a direct summand). Therefore, $H\oplus H\cong l_2(C(X))$. 
However, $H$ is not  isomorphic to $l_2(C(X))$, because  every section of $\pi_1^*(E)\oplus \bigoplus_{i=2}^\infty \pi_i^*(F)$ vanishes, and so adding the trivial rank 1 module to it cannot yield the trivial Hilbert module $l_2(C(X))$ (see the proof of $D\ncong l_2(C(B_\infty))$ in \cite[Proposition 19]{DixDou:champs}). It follows that $H\oplus H$ is not a direct summand of $H$, i.e., the unit of $B(H)$ is not properly infinite. 
\end{example}

{\small{
\bibliographystyle{amsalpha}
\bibliography{operator-2}}}


\vspace{1cm}
\noindent{\sc Department of Mathematical Sciences, University of Copenhagen, Universitetsparken 5, DK-2100 Copenhagen \O, Denmark}

\vspace{.2cm}
\noindent{\sl E-mail address:} {\tt leonel@math.ku.dk}

\vspace{.7cm}

\noindent{\sc Department of Mathematical Sciences, University of Copenhagen, Universitetsparken 5, DK-2100 Copenhagen \O, Denmark}

\vspace{.2cm}

\noindent{\sl E-mail address:} {\tt rordam@math.ku.dk}

\end{document}